\newcommand{\email}[1]{\href{mailto:#1}{\nolinkurl{#1}}}
\definecolor{dred}{HTML}{D90404}
\definecolor{labelkey}{rgb}{0,0.08,0.45}
\definecolor{refkey}{rgb}{0,0.6,0.0}
\definecolor{Brown}{rgb}{0.45,0.0,0.05}
\definecolor{dgreen}{rgb}{0.00,0.49,0.00}
\definecolor{dblue}{rgb}{0,0.08,0.75}
\DeclareSymbolFont{bsksymbols} {LBM}    {bskms}{m}{n}
\DeclareMathSymbol{\ltimesblack}     {\mathord}{bsksymbols}{"E9}
\renewcommand{\leq}{\ensuremath{\leqslant}}
\renewcommand{\geq}{\ensuremath{\geqslant}}
\renewcommand{\ge}{\ensuremath{\geqslant}}
\newcommand{\pair}[2]{{\left\langle{{#1},{#2}}\right\rangle}}
\newcommand{\menge}[2]{\big\{{#1}~\big |~{#2}\big\}} 
\newcommand{\pbas}[1]{{#1}^{{\tiny\mbox{$\vee$}}}}
\newcommand{\phaut}[1]{{#1}^{{\tiny\mbox{$\wedge$}}}}
\newcommand{\haut}[1]{{#1}^{\small\raisebox{0.0mm}%
{$\blacktriangleup$}}}
\newcommand{\haute}[1]{{#1}^{\small\raisebox{-0.2mm}%
{$\blacktriangleup$}*}}
\newcommand{\ephaut}[1]{{#1}^{*{\tiny\mbox{$\wedge$}}}}
\newcommand{\bas}[1]{{#1}^{\small\raisebox{0.0mm}%
{$\blacktriangledown$}}}
\newcommand{\base}[1]{{#1}^{\small\raisebox{0.0mm}%
{$\blacktriangledown$}*}}
\newcommand{\ebas}[1]{{#1}^{*\small\raisebox{0.0mm}%
{$\blacktriangledown$}}}
\newcommand{\ebase}[1]{{#1}^{*\small\raisebox{0.0mm}%
{$\blacktriangledown$}*}}
\newcommand{\ehaute}[1]{{#1}^{*\small\raisebox{-0.2mm}%
{$\blacktriangleup$}*}}
\newcommand{\ehaut}[1]{{#1}^{*\small\raisebox{-0.2mm}%
{$\blacktriangleup$}}}
\newcommand{\petoile}{p^{\scalebox{0.7}{\ensuremath{*}}}}
\newcommand{\normetoile}{\raisebox{-0.3mm}{\scalebox{0.7}%
{\ensuremath{*}}}}
\newcommand{\rec}{\ensuremath{\text{\rm rec}\,}}
\newcommand{\epi}{\ensuremath{\text{\rm epi}\,}}
\newcommand{\cam}{\ensuremath{\text{\rm cam}\,}}
\newcommand{\UU}{\ensuremath{\mathcal{M}}}
\newcommand{\VV}{\ensuremath{\mathcal{R}}}
\newcommand{\XX}{\ensuremath{\mathcal{X}}}
\newcommand{\YY}{\ensuremath{\mathcal{Y}}}
\newcommand{\emp}{\ensuremath{{\varnothing}}}
\newcommand{\lev}[1]{{\ensuremath{{{{\text{\rm lev}}}_{#1}}\,}}}
\newcommand{\ppersp}{\ensuremath{\ltimes}}
\newcommand{\persp}{\ensuremath{\,\ltimesblack\,}}
\newcommand{\RR}{\ensuremath{\mathbb{R}}}
\newcommand{\RP}{\ensuremath{\left[0,+\infty\right[}}
\newcommand{\RM}{\ensuremath{\left]-\infty,0\right]}}
\newcommand{\RMM}{\ensuremath{\left]-\infty,0\right[}}
\newcommand{\RPP}{\ensuremath{\left]0,+\infty\right[}}
\newcommand{\RPPX}{\ensuremath{\left]0,+\infty\right]}}
\newcommand{\RPX}{\ensuremath{\left[0,+\infty\right]}}
\newcommand{\RX}{\ensuremath{\left]-\infty,+\infty\right]}}
\newcommand{\RXX}{\ensuremath{\left[-\infty,+\infty\right]}}
\newcommand{\NN}{\ensuremath{\mathbb N}}
\newcommand{\exi}{\ensuremath{\exists\,}}
\newcommand{\pinf}{\ensuremath{{+\infty}}}
\newcommand{\minf}{\ensuremath{{-\infty}}}
\newcommand{\dom}{\ensuremath{\text{\rm dom}\,}}
\newcommand{\cdom}{\ensuremath{\overline{\text{\rm dom}}\,}}
\newcommand{\conv}{\ensuremath{\text{\rm conv}\,}}
\newcommand{\cconv}{\ensuremath{\overline{\text{\rm conv}}\,}}
\newcommand{\zeroun}{\ensuremath{\left]0,1\right[}} 
\newcommand{\rzeroun}{\ensuremath{\left]0,1\right]}} 
\newcommand{\lzeroun}{\ensuremath{\left[0,1\right[}} 
\newtheorem{theorem}{Theorem}[section]
\newtheorem{lemma}[theorem]{Lemma}
\newtheorem{corollary}[theorem]{Corollary}
\newtheorem{proposition}[theorem]{Proposition}
\theoremstyle{plain}{\theorembodyfont{\rmfamily}%
}
\theoremstyle{plain}{\theorembodyfont{\rmfamily}%
}
\theoremstyle{plain}{\theorembodyfont{\rmfamily}%
}
\theoremstyle{plain}{\theorembodyfont{\rmfamily}%
}
\theoremstyle{plain}{\theorembodyfont{\rmfamily}%
}
\theoremstyle{plain}{\theorembodyfont{\rmfamily}%
}
\theoremstyle{plain}{\theorembodyfont{\rmfamily}%
\newtheorem{example}[theorem]{Example}}
\theoremstyle{plain}{\theorembodyfont{\rmfamily}%
\newtheorem{remark}[theorem]{Remark}}
\theoremstyle{plain}{\theorembodyfont{\rmfamily}%
\newtheorem{definition}[theorem]{Definition}}
\theoremstyle{plain}{\theorembodyfont{\rmfamily}%
}
\numberwithin{equation}{section}
\def\abstract{\noindent{\bfseries Abstract}. \ignorespaces}
\begin{document}
\title{\sffamily \vskip -9mm 
Perspective Functions with Nonlinear 
Scaling\thanks{Contact author: P. L. Combettes, 
\email{plc@math.ncsu.edu}, phone: +1 919 515 2671. 
The work of P. L. Combettes was supported by the 
National Science Foundation under grant DMS-1818946.}}
\author{Luis M. Brice\~{n}o-Arias$^{1}$, Patrick L. Combettes$^2$, 
and Francisco J. Silva$^3$
\\[4mm]
\small
\small $\!^1$Universidad T\'ecnica Federico Santa Mar\'ia,
Departamento de Matem\'atica, Santiago, Chile\\
\small \email{luis.briceno@usm.cl}\\[3mm]
\small $\!^2$North Carolina State University,
Department of Mathematics, Raleigh, NC 27695-8205, USA\\
\small\email{plc@math.ncsu.edu}\\[4mm]
\small $\!^3$Universit\'e de Limoges, Laboratoire XLIM, 
87060 Limoges, France\\
\small\email{francisco.silva@unilim.fr}
}

\bigskip
\date{~}
\maketitle

\begin{abstract}
The classical perspective of a function is a construction which
transforms a convex function into one that is jointly convex with
respect to an auxiliary scaling variable. Motivated by applications
in several areas of applied analysis, we investigate an extension
of this construct in which the scaling variable is replaced by a
nonlinear term. Our construction is placed in the general context
of locally convex spaces and it generates a lower semicontinuous
convex function under broad assumptions on the underlying
functions. Various convex-analytical properties are established and
closed-form expressions are derived. Several applications are
presented.
\end{abstract} 


\newpage
\section{Introduction}
\label{sec:1}

The objective of this work is to study the following construction,
which combines two functions to generate a lower semicontinuous
convex function on a product space. Throughout, $\XX$ and $\YY$ 
are real, locally convex, Hausdorff topological vector spaces. 

\begin{definition}
\label{d:persp}
The \emph{preperspective} of a \emph{base} function
$\varphi\colon\XX\to\RXX$ with respect to a \emph{scaling} 
function $s\colon\YY\to\RXX$ is
\begin{equation}
\label{e:perspective2-}
\begin{array}{lcll}
\!\!\!\varphi\ppersp s\colon\!\!&\!\!\XX\times\YY&\!\!\to\!\!
&\RXX\\[2mm]
&\!\!(x,y)\!\!&\!\!\mapsto\!\!&
\begin{cases}
s(y)\varphi\bigg(\dfrac{x}{s(y)}\bigg),
&\text{if}\:\:0<s(y)<\pinf;\\
\pinf,&\text{if}\:\:\minf\leq s(y)\leq 0
\;\;\text{or}\:\:s(y)=\pinf,
\end{cases}
\end{array}
\end{equation}
and the \emph{perspective} of $\varphi$ with respect to
$s$ is the largest lower semicontinuous convex function 
$\varphi\persp s$ minorizing $\varphi\ppersp s$.
\end{definition}

Definition~\ref{d:persp} provides a general model for functions
found in areas such as 
mean field games \cite{Ach16a}, 
machine learning \cite{Aval18,Nock16}, 
physics \cite{Berc13,Car22b}, 
optimal transportation \cite{Bren02,Card13,Carr10,Dolb09,Maas11}, 
operator theory \cite{Carl19,Effr09,Niko13}, 
statistics \cite{Ejst20,Owen07}, 
matrix analysis \cite{Daco08}, 
mathematical programming \cite{Imai88,Mare01}, 
information theory \cite{Lutw05,Berc21},
inverse problems \cite{Micc13},
and economics \cite{Zell66}.
Although it appears for instance in \cite{Berc13,Lutw05,Berc21},
the preperspective $\varphi\ppersp s$ is of limited use
in variational problems due to its lack of 
lower semicontinuity and convexity.

Let us note that Definition~\ref{d:persp} covers the classical 
notion of a linearly scaled perspective. Indeed, let
$\Gamma_0(\XX)$ be the class of proper lower semicontinuous convex
functions from $\XX$ to $\RX$. Take $\varphi\in\Gamma_0(\XX)$ and
let $\rec\varphi$ denote the recession function of $\varphi$. Then
the classical perspective of $\varphi$ is
\begin{equation} 
\label{e:0} 
\widetilde{\varphi}
\colon\XX\times\RR\to\RX\colon (x,y)\mapsto 
\begin{cases}
y\varphi\bigg(\dfrac{x}{y}\bigg),&\text{if}\:\:y>0;\\
(\rec\varphi)(x),&\text{if}\:\:y=0;\\ 
\pinf,&\text{if}\:\:y<0.
\end{cases} 
\end{equation}
Upon letting $\YY=\RR$ and $s\colon y\mapsto y$, it follows from
\cite[Theorem~3E]{Rock66} that 
$\varphi\persp s=\widetilde{\varphi}$.
This linear scaling framework is also studied in 
\cite{Svva18,Jmaa18,Hiri93,Rock70}. 

The investigation of
notions of perspectives with nonlinear scaling functions was
initiated in \cite{Mar05a,Mar05b,Mar05c} in Euclidean spaces and
extensions to infinite-dimensional normed spaces were carried out 
in \cite{Zali08}. In these papers, $\varphi\in\Gamma_0(\XX)$ and
either $\varphi(0)\leq 0$ and $-s\in\Gamma_0(\YY)$, or
$\varphi\geq\rec\varphi$ and $s\in\Gamma_0(\YY)$. Such conditions
are not fulfilled for perspectives using the elementary base
function $\varphi=|\cdot|^2+\alpha$ ($\alpha\in\RPP$) on $\XX=\RR$,
which is used in \cite{Anto10} (see \cite{Ejst20,Owen07} for
similar examples). In addition, the construction proposed in
\cite{Mar05a,Mar05b,Mar05c,Zali08} provides lower semicontinuous
convex functions $f\leq\varphi\persp s$ and, when $\YY=\RR$ and
$s\colon y\mapsto y$, it does not capture \eqref{e:0} for a general
$\varphi\in\Gamma_0(\XX)$. 

The goal of the present work is to build a theory of perspective
functions with nonlinear scaling in the context of
Definition~\ref{d:persp} and to derive closed-form expressions for
them. We review notation and preliminary results in
Section~\ref{sec:2}. In Section~\ref{sec:3}, we introduce and study
two notions of functional envelopes which will greatly facilitate
our analysis and will constitute structuring blocks in subsequent
sections. Section~\ref{sec:4} is devoted to the derivation of
properties of preperspective functions and the computation of their
conjugates. Closed-form expressions for perspective functions in
the general setting of Definition~\ref{d:persp} are derived in
Section~\ref{sec:5}, as well as conditions that characterize their
properness. Finally, in Section~\ref{sec:6}, we provide examples
and applications of our results and, in Section~\ref{sec:7}, we
make closing statements.

\section{Notation and preliminary results}
\label{sec:2}

\subsection{Notation}
\label{sec:21}

We recall that, throughout, $\XX$ and $\YY$ 
are real, locally convex, Hausdorff topological vector spaces. 
Let $\XX^*$ be the topological dual of $\XX$, which is equipped
with the weak$*$ topology and is thus also a locally convex
Hausdorff topological vector space. In this context, $\XX$ and
$\XX^*$ are placed in compatible duality (see \cite{Bour81}) via
the canonical form 
$\pair{\cdot}{\cdot}_\XX\colon\XX\times\XX^*\to\RR
\colon(x,x^*)\mapsto x^*(x)$. We denote
by $\XX\oplus\YY$ the standard product vector space equipped with
the product topology and paired with its topological dual 
$\XX^*\times\YY^*$ via 
\begin{equation}
\label{ici}
\big(\forall(x,y)\in\XX\times\YY\big)
\big(\forall(x^*,y^*)\in\XX^*\times\YY^*\big)
\quad\pair{(x,y)}{(x^*,y^*)}_{\XX\times\YY}=
\pair{x}{x^*}_{\XX}+\pair{y}{y^*}_{\YY}.
\end{equation}
From now on, we drop the subscripts on the pairing brackets.
Let $f\colon\XX\to\RXX$. Then
$\dom f=\menge{x\in\XX}{f(x)<\pinf}$ is the domain of $f$,
$\cdom f$ the closure of $\dom f$,
$\lev{\leq\xi}f=\menge{x\in\XX}{f(x)\leq\xi}$ the
lower level set of $f$ at height $\xi\in\RR$, and
$\epi f=\menge{(x,\xi)\in\XX\times\RR}{f(x)\leq\xi}$ the epigraph
of $f$. We say that $f$ is convex if $\epi f$ is convex, lower
semicontinuous if $\epi f$ is closed, and 
proper if $\minf\notin f(\XX)\neq\{\pinf\}$. 
We denote by $\cam f$ the set of continuous affine minorants
of $f$ and put
\begin{equation}
(\forall x\in\XX)\quad f^{**}(x)=\sup_{a\in\cam f}a(x).
\end{equation}
In addition, we denote by $\breve{f}\colon\XX\to\RXX$ the largest
lower semicontinuous convex function majorized by $f$. The
conjugate of $f$ is 
\begin{equation}
\label{e:dconju}
f^*\colon\XX^*\to\RXX\colon
x^*\mapsto\sup_{x\in\XX}\:\big(\pair{x}{x^*}-f(x)\big)
\end{equation}
and the conjugate of $g\colon\XX^*\to\RXX$ is 
\begin{equation}
\label{e:conju+}
g^*\colon\XX\to\RXX\colon
x\mapsto\sup_{x^*\in\XX^*}\:\big(\pair{x}{x^*}-g(x^*)\big).
\end{equation}
If $f$ is proper and convex, its recession function is
\begin{equation}
\rec f\colon\XX\to\RXX\colon x\mapsto
\sup_{y\in\dom f}\big(f(x+y)-f(y)\big).
\end{equation}
and, if $f\in\Gamma_0(\XX)$ and $z\in\dom f$, we have
\begin{equation}
\label{e:uu}
(\forall x\in\XX)\quad(\rec f)(x)=\lim_{0<\alpha\to\pinf}
\dfrac{f(z+\alpha x)-f(z)}{\alpha}=\sup_{\alpha\in\RPP}
\dfrac{f(z+\alpha x)-f(z)}{\alpha}.
\end{equation}
The set of proper lower semicontinuous convex functions from $\XX$
to $\RX$ is denoted by $\Gamma_0(\XX)$.

Let $C$ be a subset of $\XX$. The indicator function of $C$ is
denoted by $\iota_C$, the support function of $C$ by $\sigma_C$,
the smallest convex set containing $C$ by $\conv C$, the
smallest closed convex set containing $C$ by $\cconv C$, and the
recession cone of $C$ by $\rec C$.

\subsection{Facts from convex analysis}
\label{sec:22}

The first three lemmas are standard; see
\cite{Cast77,Ioff74,Laur72,More67,Zali02}.

\begin{lemma}
\label{l:15}
Let $f\in\Gamma_0(\XX)$, let $x\in\dom f$, and let $y\in\dom f$.
Then $f$ is continuous relative to $[x,y]$.
\end{lemma}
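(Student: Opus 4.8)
The plan is to reduce the statement to the one-dimensional case by restricting $f$ to the segment. I would first dispose of the trivial case $x=y$ and then set $\gamma\colon[0,1]\to\XX\colon t\mapsto(1-t)x+ty$, so that $\gamma$ is an affine bijection onto $[x,y]$. Since $\XX^*$ separates points, I can pick $x^*\in\XX^*$ with $\pair{x}{x^*}\neq\pair{y}{x^*}$; then $t\mapsto\pair{\gamma(t)}{x^*}$ is an affine homeomorphism of $[0,1]$ onto a compact real interval, which shows that $\gamma$ is a homeomorphism from $[0,1]$ onto $[x,y]$ endowed with the topology induced by $\XX$. Hence continuity of $f$ relative to $[x,y]$ is equivalent to continuity on $[0,1]$ of $g=f\circ\gamma$, and it is enough to prove the latter.

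Next I would record the properties of $g$. Because $\dom f$ is convex and contains $x$ and $y$, it contains $[x,y]$, so $g<\pinf$ on $[0,1]$; since $f$ is proper, $g$ never equals $\minf$, hence $g$ is real-valued on $[0,1]$. It is convex, being the composition of the convex function $f$ with the affine map $\gamma$, and it is lower semicontinuous on $[0,1]$ since $f$ is lower semicontinuous and $\gamma$ is continuous.

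It then remains to show that a real-valued, lower semicontinuous convex function $g$ on $[0,1]$ is continuous. On $\zeroun$ this is the classical fact that a finite convex function on an open real interval is continuous. At the endpoint $0$, convexity gives, for $t\in\zeroun$, $g(t)=g\big((1-t)\cdot 0+t\cdot 1\big)\leq(1-t)g(0)+tg(1)$, so $\limsup_{t\downarrow 0}g(t)\leq g(0)$, while lower semicontinuity gives $\liminf_{t\downarrow 0}g(t)\geq g(0)$; thus $g(t)\to g(0)$ as $t\downarrow 0$. The same reasoning at $1$ gives $g(t)\to g(1)$ as $t\uparrow 1$, so $g$ is continuous on $[0,1]$, which by the reduction above finishes the argument.

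I do not expect any serious obstacle here; the one point requiring care is the behaviour at the endpoints, since a real-valued convex function on $[0,1]$ need not be continuous there in general, and it is exactly the lower semicontinuity of $f$ that prevents an upward jump. The use of a separating functional to realize $\gamma$ as a homeomorphism is what makes ``continuous relative to $[x,y]$'' precise in the locally convex Hausdorff setting.
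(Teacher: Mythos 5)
Your proof is correct. The paper does not prove this lemma at all—it records it as a standard fact with citations—and your argument (reducing via the affine homeomorphism $t\mapsto(1-t)x+ty$ to a finite, lower semicontinuous convex function on $[0,1]$, using the classical continuity of a finite convex function on an open real interval, and handling the endpoints by combining the convexity upper estimate with lower semicontinuity) is exactly the standard proof given in such references, with the endpoint subtlety correctly identified and resolved.
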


\begin{lemma}
\label{l:99}
Let $C\subset\XX$. Then $\sigma_C=\sigma_{\cconv C}$.
\end{lemma}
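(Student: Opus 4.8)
The plan is to prove Lemma~\ref{l:99}, namely that $\sigma_C = \sigma_{\cconv C}$ for any subset $C$ of $\XX$. First I would observe that the support function $\sigma_C = \iota_C^*$ is defined by $\sigma_C(x^*) = \sup_{x \in C} \pair{x}{x^*}$, so the claim is really a statement about how the conjugate of an indicator function interacts with the closed convex hull operation. The natural strategy is a double inequality. One direction is immediate: since $C \subset \cconv C$, taking the supremum over the larger set can only increase it, so $\sigma_C \leq \sigma_{\cconv C}$ pointwise on $\XX^*$.

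For the reverse inequality $\sigma_{\cconv C} \leq \sigma_C$, I would proceed in two stages. First, for the convex hull: any $x \in \conv C$ is a finite convex combination $x = \sum_i \lambda_i x_i$ with $x_i \in C$, $\lambda_i \geq 0$, $\sum_i \lambda_i = 1$, whence $\pair{x}{x^*} = \sum_i \lambda_i \pair{x_i}{x^*} \leq \sum_i \lambda_i \sigma_C(x^*) = \sigma_C(x^*)$; taking the supremum over $x \in \conv C$ gives $\sigma_{\conv C} \leq \sigma_C$, and combined with the trivial reverse inclusion we get $\sigma_{\conv C} = \sigma_C$. Second, for the closure: fix $x^* \in \XX^*$ and $x \in \cconv C = \overline{\conv C}$. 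Since the linear functional $\pair{\cdot}{x^*}$ is continuous on $\XX$, there is a net $(x_\alpha)$ in $\conv C$ with $x_\alpha \to x$, hence $\pair{x}{x^*} = \lim_\alpha \pair{x_\alpha}{x^*} \leq \sigma_{\conv C}(x^*) = \sigma_C(x^*)$. Taking the supremum over $x \in \cconv C$ yields $\sigma_{\cconv C} \leq \sigma_C$, and the two inequalities combine to give equality. (If the values are $\pinf$, the inequalities above are read in $\RXX$ with the usual conventions, and nothing changes.)

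I do not expect any genuine obstacle here: the result is a textbook fact, and the only points requiring a modicum of care are that $\XX$ is merely a locally convex Hausdorff space rather than a normed space — so one must use nets rather than sequences for the closure step, which the argument above already does — and the handling of the possibility $\sigma_C \equiv +\infty$ or $C = \emptyset$ (in which case $\cconv C = \emptyset$ and both sides equal $-\infty$). Since the excerpt explicitly flags this as one of ``the first three lemmas [which] are standard,'' I would in fact simply cite \cite{Cast77,Ioff74,Laur72,More67,Zali02} and omit the proof, but the short argument sketched above is what underlies it.
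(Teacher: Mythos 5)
Your proposal is correct: the double-inequality argument (trivial direction from $C\subset\cconv C$, then convex combinations for $\conv C$ and continuity of $\pair{\cdot}{x^*}$ along nets for the closure, using $\cconv C=\overline{\conv C}$) is exactly the standard proof, and it handles the locally convex setting and the degenerate cases properly. The paper itself offers no argument and simply cites the result as standard, which is also what you indicate you would do, so your treatment matches the paper's.
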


\begin{lemma}
\label{l:6}
Let $f\colon\XX\to\RXX$. Then the following hold:
\begin{enumerate}
\item
\label{l:6i-} 
$(f^*)^*=f^{**}\leq\breve{f}\leq f$.
\item
\label{l:6i} 
$(\breve{f})^*=f^*=f^{***}$.
\item
\label{l:6ii} 
$\cam f\neq\emp$ $\Leftrightarrow$ 
$\minf\notin f^{**}(\XX)$ $\Leftrightarrow$ $f^{**}\not\equiv\minf$
$\Leftrightarrow$ $\dom f^*\neq\varnothing$.
\item
\label{l:6iii} 
$\cam f=\emp$ $\Rightarrow$ $\breve{f}(\XX)\subset\{\minf,\pinf\}$.
\item
\label{l:6iv} 
Suppose that $f\in\Gamma_0(\XX)$. Then $\cam f\neq\emp$ and
$\breve{f}=f^{**}=f$.
\end{enumerate}
\end{lemma}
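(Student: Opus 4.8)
The plan is to derive all five assertions from a single separation fact, proved by Hahn--Banach in the locally convex space $\XX\times\RR$: \emph{if $g\colon\XX\to\RXX$ is lower semicontinuous and convex and takes a value in $\RR$, then $\cam g\neq\varnothing$ and $g=\sup_{a\in\cam g}a$.} Before using it, I would record the elementary dictionary that a continuous affine function on $\XX$ has the form $a\colon x\mapsto\pair{x}{x^*}-\mu$ with $x^*\in\XX^*$ and $\mu\in\RR$, and that $a\leq f$ holds if and only if $f^*(x^*)\leq\mu$; hence $\cam f\neq\varnothing$ if and only if $\dom f^*\neq\varnothing$, and the pointwise supremum at $x$ of all such minorants equals $\sup_{x^*\in\dom f^*}\big(\pair{x}{x^*}-f^*(x^*)\big)$, which is exactly $(f^*)^*(x)$ (the terms with $x^*\notin\dom f^*$ contribute $\minf$ and are harmless when $\dom f^*\neq\varnothing$; when $\dom f^*=\varnothing$ both expressions equal $\minf$, since $\sup\varnothing=\minf$ and $f^*\equiv\pinf$). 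This is the identity $(f^*)^*=f^{**}$ in \ref{l:6i-}. Since $f^{**}$ is a supremum of continuous affine functions each minorizing $f$, it is lower semicontinuous, convex, and majorized by $f$; as $\breve f$ is by definition the largest such function, $f^{**}\leq\breve f\leq f$, which completes \ref{l:6i-}.

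For \ref{l:6i}, applying \ref{l:6i-} to $f^*$ gives $f^{***}=(f^*)^{**}\leq f^*$, while conjugating the inequality $f^{**}\leq f$ gives $f^{***}\geq f^*$; hence $f^*=f^{***}=(f^{**})^*$. Since conjugation reverses order and $f^{**}\leq\breve f\leq f$, we get $f^*=(f^{**})^*\geq(\breve f)^*\geq f^*$, so $(\breve f)^*=f^*$. For \ref{l:6ii}, the equivalence $\cam f\neq\varnothing\Leftrightarrow\dom f^*\neq\varnothing$ is the dictionary above, and $\dom f^*=\varnothing\Leftrightarrow f^*\equiv\pinf\Leftrightarrow f^{**}=(f^*)^*\equiv\minf$ is immediate; the one remaining point, $f^{**}\not\equiv\minf\Leftrightarrow\minf\notin f^{**}(\XX)$, holds because $f^{**}=\sup_{a\in\cam f}a$ is a supremum over a family of everywhere-finite functions that is nonempty precisely when $f^{**}\not\equiv\minf$, so in that case $f^{**}>\minf$ everywhere.

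For \ref{l:6iii}, assume $\cam f=\varnothing$ and suppose, for contradiction, that $\breve f(x_0)\in\RR$. If $\breve f$ took the value $\minf$ at some point $x_1$, then convexity forces $\breve f=\minf$ on the half-open segment from $x_1$ to $x_0$, and lower semicontinuity at $x_0$ then forces $\breve f(x_0)=\minf$, a contradiction; hence $\breve f$ is proper. But then the core fact produces a continuous affine minorant of $\breve f$, which is also a minorant of $f$ since $\breve f\leq f$, contradicting $\cam f=\varnothing$. Therefore $\breve f(\XX)\subset\{\minf,\pinf\}$. For \ref{l:6iv}, if $f\in\Gamma_0(\XX)$ then $f$ is proper, lower semicontinuous, and convex, so the core fact yields $\cam f\neq\varnothing$ and $f=\sup_{a\in\cam f}a=f^{**}$; since $f$ is itself a lower semicontinuous convex minorant of $f$, we get $f\leq\breve f\leq f$, whence $\breve f=f^{**}=f$.

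It remains to prove the core separation fact, which I expect to be the one genuinely technical step. Pick $x_1$ with $g(x_1)\in\RR$. For any $\rho<g(x_1)$, the point $(x_1,\rho)$ lies outside the nonempty closed convex set $\epi g$, so Hahn--Banach strict separation gives $(x^*,\lambda)\in\XX^*\times\RR$ and $\alpha\in\RR$ with $\pair{x}{x^*}+\lambda\xi\geq\alpha>\pair{x_1}{x^*}+\lambda\rho$ for all $(x,\xi)\in\epi g$; letting $\xi\uparrow\pinf$ forces $\lambda\geq0$, and $\lambda=0$ is impossible because $x_1\in\dom g$, so $\lambda>0$ and $x\mapsto(\alpha-\pair{x}{x^*})/\lambda$ is a continuous affine minorant of $g$, whence $\cam g\neq\varnothing$. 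For an arbitrary target $x_0$ and any $\rho<g(x_0)$, separating $(x_0,\rho)$ from $\epi g$ again forces $\lambda\geq0$ (let $\xi\uparrow\pinf$ with $x=x_1$); if $\lambda>0$ the same formula gives an affine minorant of $g$ with value $>\rho$ at $x_0$, while if $\lambda=0$ one adds to the minorant found above a suitable positive multiple of the separating linear form, which is nonpositive on $\dom g$ and positive at $x_0$, to obtain an affine minorant of $g$ with value $>\rho$ at $x_0$. Letting $\rho\uparrow g(x_0)$ gives $g(x_0)=\sup_{a\in\cam g}a(x_0)$, completing the proof of the fact. The care needed to run the $\lambda\geq0$ argument from a finite point of $g$ and to dispose of the case $\lambda=0$ is the technical crux; everything else is formal manipulation of the definitions of $\cam$, $f^{**}$, $\breve f$, and conjugation.
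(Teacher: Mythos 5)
Your proof is correct. Note, however, that the paper does not prove Lemma~\ref{l:6} at all: it declares it standard and refers to \cite{Cast77,Ioff74,Laur72,More67,Zali02}. What you have written is a self-contained reconstruction of precisely that standard material --- the Fenchel--Moreau biconjugation theory obtained by Hahn--Banach strict separation of a point from the closed convex epigraph in $\XX\times\RR$, together with the dictionary identifying $\sup_{a\in\cam f}a$ with $(f^*)^*$ --- so you are following the same route as the cited sources, only carrying it out rather than quoting it. The deductions of \ref{l:6i-}--\ref{l:6iv} from your core separation fact are sound, including the two genuinely delicate points: the disposal of the case $\lambda=0$ by adding a large positive multiple of the separating affine form (nonpositive on $\dom g$, strictly positive at the target point) to an already constructed minorant, and the segment-plus-lower-semicontinuity argument in \ref{l:6iii} showing that a lower semicontinuous convex function with a finite value never takes the value $\minf$. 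One small presentational gap: in the core fact itself you should state explicitly that such a $g$ is proper before asserting that $x\mapsto(\alpha-\pair{x}{x^*})/\lambda$ minorizes $g$ on all of $\XX$ (a point where $g=\minf$ would defeat any affine minorant); this is immediate either from your own segment argument or from the separation inequality with $\lambda>0$ by letting $\xi\to\minf$ along a downward vertical line of the epigraph, so it is a matter of wording rather than of substance. Your item \ref{l:6i} also tacitly uses that the continuous affine functions on $\XX^*$ with its weak$*$ topology are exactly $x^*\mapsto\pair{x}{x^*}-\mu$ with $x\in\XX$, which is the compatibility of the duality the paper sets up in Section~\ref{sec:21}; it is worth a sentence when applying \ref{l:6i-} to $f^*$.
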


\begin{lemma}
\label{l:9}
Let $f\colon\XX\to\RX$ be such that $\cam f\neq\emp$.
Then the following hold:
\begin{enumerate}
\item
\label{l:9i-} 
Suppose that $f\not\equiv\pinf$. Then 
$f^*\in\Gamma_0(\XX^*)$ and $f^{**}\in\Gamma_0(\XX)$. 
\item
\label{l:9i} 
$\breve{f}=f^{**}$.
\item
\label{l:9ii}
$\cconv\dom f=\cdom f^{**}$.
\end{enumerate}
\end{lemma}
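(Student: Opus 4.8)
First I would observe that the hypothesis $\cam f\neq\emp$ is exactly the condition appearing in Lemma~\ref{l:6}\ref{l:6ii}, so $\minf\notin f^{**}(\XX)$ and $\dom f^*\neq\varnothing$; combined with $f\colon\XX\to\RX$ (so $\minf\notin f(\XX)$ already), this guarantees $f^*$ is a proper function with values in $\RX$. To prove \ref{l:9i-}, I would argue that $f^*$ is always convex and lower semicontinuous as a supremum of the continuous affine functions $x^*\mapsto\pair{x}{x^*}-f(x)$ (over $x\in\dom f$; the set is nonempty since $f\not\equiv\pinf$), hence $f^*\in\Gamma_0(\XX^*)$ once properness is in hand. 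Properness of $f^*$: it is $\not\equiv\pinf$ because $\dom f^*\neq\varnothing$ by Lemma~\ref{l:6}\ref{l:6ii}, and it never takes the value $\minf$ because $f\not\equiv\pinf$ gives some $x_0\in\dom f$, whence $f^*(x^*)\geq\pair{x_0}{x^*}-f(x_0)>\minf$ for every $x^*$. Then $f^{**}=(f^*)^*$ is the conjugate of the proper function $f^*\in\Gamma_0(\XX^*)$, so by the same reasoning applied to $f^*$ (using $\cam{f^*}\neq\emp$, which holds since $f^{**}\not\equiv\minf$ by Lemma~\ref{l:6}\ref{l:6ii}, equivalently $\cam f\neq\emp$) we get $f^{**}\in\Gamma_0(\XX)$; one should also note $f^{**}\not\equiv\pinf$, which follows because $f\not\equiv\pinf$ forces $f^{**}\leq\breve f\leq f\not\equiv\pinf$ via Lemma~\ref{l:6}\ref{l:6i-}.

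For \ref{l:9i}, the inclusion $f^{**}\leq\breve f$ is Lemma~\ref{l:6}\ref{l:6i-}. For the reverse, by \ref{l:9i-} we have $f^{**}\in\Gamma_0(\XX)$, so $f^{**}$ is itself a lower semicontinuous convex function majorized by $f$ (again Lemma~\ref{l:6}\ref{l:6i-}), and therefore $f^{**}\geq\breve f$ by the very definition of $\breve f$ as the \emph{largest} such function. The only subtlety is the degenerate case $f\equiv\pinf$, where $f^*\equiv\minf$, $f^{**}\equiv\pinf=\breve f$, and the identity still holds trivially; I would dispatch this at the outset. This yields $\breve f=f^{**}$.

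For \ref{l:9ii}, the plan is a double inclusion. First, $\dom f^{**}\subset\cconv\dom f$: if $x\in\dom f^{**}$ but $x\notin\cconv\dom f$, then by Hahn–Banach (separation in the locally convex space $\XX$) there is a continuous linear functional strictly separating $x$ from the closed convex set $\cconv\dom f\supset\dom f$, which lets one add arbitrarily large multiples of that functional to a fixed continuous affine minorant of $f$ (one exists since $\cam f\neq\emp$) while staying below $f$ on $\dom f$, forcing $f^{**}(x)=\pinf$, a contradiction; taking closures gives $\cdom f^{**}\subset\cconv\dom f$. Conversely $\dom f\subset\dom f^{**}$ since $f^{**}\leq f$, so $\cdom f\subset\cdom f^{**}$, and as $\cdom f^{**}$ is closed and convex (being the closure of the convex set $\dom f^{**}$, $f^{**}$ being convex) it contains $\cconv\dom f$. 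Combining the two inclusions gives $\cconv\dom f=\cdom f^{**}$.

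The main obstacle is the first inclusion in \ref{l:9ii}: making the separation argument produce $f^{**}(x)=\pinf$ rigorously in the general locally convex setting, carefully using $\cam f\neq\emp$ to anchor an affine minorant and then perturbing it by an unbounded family of separating functionals. The rest is bookkeeping with the definitions and Lemma~\ref{l:6}; the $f\equiv\pinf$ edge case must be noted but is immediate.
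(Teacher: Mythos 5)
Your arguments for \ref{l:9i-} and \ref{l:9ii} are correct and essentially self-contained alternatives to the paper's citations: the paper quotes \cite{More67} for \ref{l:9i-}--\ref{l:9i} and deduces \ref{l:9ii} from the inclusions $\conv\dom f\subset\dom f^{**}\subset\cconv\dom f$ of \cite[Proposition~9.8(iv)]{Livre1}, whereas your Hahn--Banach separation argument (anchoring an affine minorant from $\cam f\neq\emp$ and adding large multiples of a separating functional) proves those inclusions directly. The problem is in \ref{l:9i}, where your proof has a genuine logical gap at its central step. You argue that $f^{**}$ is a lower semicontinuous convex function majorized by $f$, ``therefore $f^{**}\geq\breve{f}$ by the very definition of $\breve{f}$ as the largest such function.'' The definition gives exactly the opposite: since $\breve{f}$ is the \emph{largest} lower semicontinuous convex minorant of $f$, exhibiting $f^{**}$ as one such minorant yields $f^{**}\leq\breve{f}$ --- the inequality you already had from Lemma~\ref{l:6}\ref{l:6i-}. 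The reverse inequality $\breve{f}\leq f^{**}$ is the substantive Fenchel--Moreau-type content of the item, and it is nowhere established in your proposal.

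A correct completion along your lines: dispose of $f\equiv\pinf$ at the outset as you do; otherwise, $\cam f\neq\emp$ and Lemma~\ref{l:6}\ref{l:6i-}--\ref{l:6ii} give $\minf\notin f^{**}(\XX)$ and $f^{**}\leq\breve{f}\leq f$, so $\breve{f}$ is proper and hence $\breve{f}\in\Gamma_0(\XX)$. Applying Lemma~\ref{l:6}\ref{l:6iv} to $\breve{f}$ yields
\begin{equation*}
\breve{f}=(\breve{f})^{**}=\sup_{a\in\cam\breve{f}}a
\leq\sup_{a\in\cam f}a=f^{**},
\end{equation*}
where the inequality holds because $\breve{f}\leq f$ implies $\cam\breve{f}\subset\cam f$. (Equivalently: $\breve{f}\leq f$ gives $(\breve{f})^*\geq f^*$, hence $\breve{f}=(\breve{f})^{**}\leq f^{**}$.) With this repair the lemma is proved; note that this is precisely the step the paper outsources to \cite{More67}.
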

\begin{proof}
\ref{l:9i-}--\ref{l:9i}: See \cite{More67}.

\ref{l:9ii}: We derive from \ref{l:9i} and
\cite[Proposition~9.8(iv)]{Livre1} (its proof
remains valid in our setting) that 
$\conv\dom f\subset\dom f^{**}\subset\cconv\dom f$. Taking the
closure yields the identity. 
\end{proof}

\begin{lemma}
\label{l:pjlaurent}
Let $f\in\Gamma_0(\XX)$. Then the following hold:
\begin{enumerate}
\item
\label{l:pjlaurenti}
$\rec\epi f=\epi\rec f$.
\item
\label{l:pjlaurentii}
$\rec f=\sigma_{\dom f^*}=\rec(f^{**})$.
\item
\label{l:pjlaurentiv} 
$f=\rec{f}$ $\Leftrightarrow$ $f^*(\dom f^*)=\{0\}$.
\end{enumerate}
\end{lemma}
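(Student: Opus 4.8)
The plan is to obtain all three assertions by unwinding the relevant definitions together with the biconjugation identity $f=f^{**}$, which is available here because $f\in\Gamma_0(\XX)$ (Lemma~\ref{l:6}\ref{l:6iv}); I shall also use that $\dom f^*\neq\varnothing$ (Lemma~\ref{l:6}\ref{l:6ii}), so that $\sigma_{\dom f^*}$ is a genuine element of $\Gamma_0(\XX)$ rather than the constant $\minf$. I would prove \ref{l:pjlaurenti} first, then \ref{l:pjlaurentii}, and finally obtain \ref{l:pjlaurentiv} as a short consequence of \ref{l:pjlaurentii}.

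For \ref{l:pjlaurenti}: since $f\in\Gamma_0(\XX)$, the epigraph $\epi f$ is a nonempty closed convex subset of $\XX\times\RR$, so $(d,\eta)\in\rec\epi f$ if and only if $\epi f+(d,\eta)\subset\epi f$, i.e.\ $f(x+d)\leq\xi+\eta$ for every $(x,\xi)\in\epi f$. Choosing $\xi=f(x)$ (which in particular forces $x\in\dom f$) shows that this is equivalent to $f(x+d)-f(x)\leq\eta$ for every $x\in\dom f$, that is, to $(\rec f)(d)\leq\eta$, i.e.\ $(d,\eta)\in\epi\rec f$. The only point deserving care is the passage, in the definition of $\rec\epi f$, from invariance under a single translation by $(d,\eta)$ to invariance under all translations $\lambda(d,\eta)$, $\lambda\in\RP$; this is the standard description of the recession cone of a nonempty closed convex set, and it can alternatively be checked directly by iterating $f(\cdot+d)\leq f(\cdot)+\eta$ on $\dom f$ and interpolating by convexity.

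For \ref{l:pjlaurentii}: the identity $\rec f=\rec(f^{**})$ is immediate from $f=f^{**}$, so the substance is $\rec f=\sigma_{\dom f^*}$, which I would establish by proving two inequalities. For ``$\geq$'', I fix $d\in\XX$, $x^*\in\dom f^*$ and $\varepsilon\in\RPP$ and pick $y\in\XX$ with $\pair{y}{x^*}-f(y)>f^*(x^*)-\varepsilon$; since $f^*(x^*)\in\RR$ this forces $y\in\dom f$, and then combining $f(y+d)\geq\pair{y+d}{x^*}-f^*(x^*)$ with the near-equality at $y$ gives $(\rec f)(d)\geq f(y+d)-f(y)>\pair{d}{x^*}-\varepsilon$; letting $\varepsilon\downarrow 0$ and then taking the supremum over $x^*\in\dom f^*$ yields $(\rec f)(d)\geq\sigma_{\dom f^*}(d)$. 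For ``$\leq$'', I fix $d\in\XX$ and $y\in\dom f$, write $f(y+d)=f^{**}(y+d)=\sup_{x^*\in\dom f^*}\big(\pair{y+d}{x^*}-f^*(x^*)\big)$ (the supremum may be restricted to $\dom f^*$ because $f^*\equiv\pinf$ off it), and note that for each $x^*\in\dom f^*$ one has $\pair{y+d}{x^*}-f^*(x^*)-f(y)=\pair{d}{x^*}+\big(\pair{y}{x^*}-f^*(x^*)-f(y)\big)\leq\pair{d}{x^*}\leq\sigma_{\dom f^*}(d)$, using $\pair{y}{x^*}-f(y)\leq f^*(x^*)$; hence $f(y+d)-f(y)\leq\sigma_{\dom f^*}(d)$, and a supremum over $y\in\dom f$ finishes it. The one slightly delicate ingredient here is that the supremum defining $f^*(x^*)$ need not be attained, so one must work with an $\varepsilon$-suboptimal $y$ in the lower bound; the rest is bookkeeping.

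For \ref{l:pjlaurentiv}: if $f^*(\dom f^*)=\{0\}$, then, using $f=f^{**}$, for every $x\in\XX$ we get $f(x)=\sup_{x^*\in\dom f^*}\big(\pair{x}{x^*}-f^*(x^*)\big)=\sup_{x^*\in\dom f^*}\pair{x}{x^*}=\sigma_{\dom f^*}(x)=(\rec f)(x)$ by \ref{l:pjlaurentii}. Conversely, if $f=\rec f$, then $f=\sigma_{\dom f^*}$ by \ref{l:pjlaurentii}, hence for every $x^*\in\dom f^*$ we have $f^*(x^*)=\sup_{x\in\XX}\big(\pair{x}{x^*}-\sigma_{\dom f^*}(x)\big)$, and each term of this supremum is $\leq 0$ (since $x^*\in\dom f^*$ gives $\pair{x}{x^*}\leq\sigma_{\dom f^*}(x)$) while the term at $x=0$ equals $0$; thus $f^*(x^*)=0$, i.e.\ $f^*(\dom f^*)=\{0\}$. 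Overall I anticipate no serious obstacle: the argument is essentially a careful unwinding of definitions, the two genuinely active tools being the biconjugation identity $f=f^{**}$ and the use of $\varepsilon$-suboptimal points in \ref{l:pjlaurentii}.
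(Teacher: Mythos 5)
Your proof is correct, but it takes a genuinely different route from the paper's. For \ref{l:pjlaurenti} and the first identity in \ref{l:pjlaurentii} the paper does not argue at all: it cites \cite[Proposition~9.29]{Livre1} for $\rec\epi f=\epi\rec f$ and \cite[Corollary~3D]{Rock66} for $\rec f=\sigma_{\dom f^*}$, adding only that the second equality in \ref{l:pjlaurentii} follows from $f=f^{**}$ (Lemma~\ref{l:6}\ref{l:6iv}). You instead supply self-contained elementary proofs: the translation-invariance description of the recession cone of the closed convex set $\epi f$ for \ref{l:pjlaurenti} (with the correct remark that invariance under one translation upgrades to all nonnegative multiples by iteration and convex interpolation), and for \ref{l:pjlaurentii} a two-inequality argument combining Fenchel--Young with $\varepsilon$-suboptimal points; your bookkeeping there is exactly the care needed ($f^*(x^*)\in\RR$ on $\dom f^*$ since $f$ is proper, the forced membership $y\in\dom f$, and the harmless restriction of the biconjugate supremum to $\dom f^*$). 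For \ref{l:pjlaurentiv} the paper conjugates the identity $f=\sigma_{\dom f^*}$ to obtain $f=\rec f\Leftrightarrow f^*=\iota_{\cdom f^*}$, deduces $\dom f^*=\cdom f^*$, and concludes via the lower semicontinuity of $f^*$; you avoid this detour by computing directly with $f=f^{**}$ in one direction and, in the other, evaluating the conjugate of $\sigma_{\dom f^*}$ at a point $x^*\in\dom f^*$ (every term $\leq 0$, the term at $0$ equal to $0$, using $\dom f^*\neq\varnothing$). What your route buys is self-containedness, at the cost of reproving two classical facts the paper is content to cite; the paper's route buys brevity and yields, as a by-product, the observation that $\dom f^*$ is closed when $f=\rec f$.
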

\begin{proof}
\ref{l:pjlaurenti}: 
See \cite[Proposition~9.29]{Livre1} 
(its proof remains valid in our setting).

\ref{l:pjlaurentii}: The first identity is from 
\cite[Corollary~3D]{Rock66}. In view of Lemma~\ref{l:6}\ref{l:6iv},
it implies the second.

\ref{l:pjlaurentiv}: 
It follows from Lemma~\ref{l:6}\ref{l:6iv},
\ref{l:pjlaurentii}, and Lemma~\ref{l:9}\ref{l:9i} that 
\begin{equation}
f=\rec f\quad\Leftrightarrow\quad 
f^*=(\iota_{\dom f^*})^{**}=\iota_{\cdom f^*},
\end{equation}
which implies that $\dom f^*=\dom\iota_{\cdom f^*}=\cdom f^*$.
Thus, since $f^*$ is lower semicontinuous, 
$f=\rec f$ $\Leftrightarrow$ 
$f^*=\iota_{\cdom f^*}=\iota_{\dom f^*}$
$\Leftrightarrow$ $f^*(\dom f^*)=\{0\}$.
\end{proof}

\begin{lemma}
\label{l:12e} 
Let $f\in\Gamma_0(\XX)$. Then the following are equivalent:
\begin{enumerate}
\item 
\label{l:12ei} 
$\rec f\leq f$.
\item 
\label{l:12eii} 
$(\forall\lambda\in\left[1,\pinf\right[)(\forall x\in\XX)$
$f(\lambda x)\leq\lambda f(x)$.
\item 
\label{l:12eiii} 
$f^*(\dom f^*)\subset\RM$.
\end{enumerate}
\end{lemma}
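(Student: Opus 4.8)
The plan is to establish the cyclic chain of implications \ref{l:12ei}$\Rightarrow$\ref{l:12eii}$\Rightarrow$\ref{l:12eiii}$\Rightarrow$\ref{l:12ei}. The main ingredients will be: the elementary bound $f(x+y)\leq f(x)+(\rec f)(y)$, valid for every $x\in\dom f$ and every $y\in\XX$ and immediate from the definition of $\rec f$; the positive homogeneity of $\rec f$, which follows from $\epi\rec f=\rec\epi f$ (Lemma~\ref{l:pjlaurent}\ref{l:pjlaurenti}) together with the fact that the recession cone of a set is a cone; the standard conjugation identities for dilations and positive rescalings of $f$; and the representation $\rec f=\sigma_{\dom f^*}$ of Lemma~\ref{l:pjlaurent}\ref{l:pjlaurentii} combined with $f=f^{**}$ from Lemma~\ref{l:6}\ref{l:6iv}.

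For \ref{l:12ei}$\Rightarrow$\ref{l:12eii}, I would fix $\lambda\in\left]1,\pinf\right[$ and $x\in\dom f$ and decompose $\lambda x=x+(\lambda-1)x$. The bound above applied with $y=(\lambda-1)x$ gives $f(\lambda x)\leq f(x)+(\rec f)((\lambda-1)x)$, and positive homogeneity rewrites the last term as $(\lambda-1)(\rec f)(x)$. Since $f\in\Gamma_0(\XX)$ and $x\in\dom f$, $f(x)$ is a real number, so \ref{l:12ei} yields $f(\lambda x)\leq f(x)+(\lambda-1)f(x)=\lambda f(x)$.

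For \ref{l:12eii}$\Rightarrow$\ref{l:12eiii}, I would first note that \ref{l:12eii} holds trivially for $x\notin\dom f$ as well, so $f(\lambda\,\cdot\,)\leq\lambda f$ on $\XX$ for each $\lambda\in\left]1,\pinf\right[$. Conjugating this inequality (conjugation being order-reversing) and using $\big(f(\lambda\,\cdot\,)\big)^*\colon x^*\mapsto f^*(x^*/\lambda)$ and $(\lambda f)^*\colon x^*\mapsto\lambda f^*(x^*/\lambda)$, both obtained from the change of variable $x\mapsto\lambda x$ in \eqref{e:dconju}, I get $\lambda f^*(u)\leq f^*(u)$ for every $u\in\XX^*$ (take $u=x^*/\lambda$). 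Because $f^*$ is proper (as $f\in\Gamma_0(\XX)$), the value $f^*(u)$ is a real number for $u\in\dom f^*$, so $(\lambda-1)f^*(u)\leq 0$ and hence $f^*(u)\leq 0$, which is \ref{l:12eiii}.

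For \ref{l:12eiii}$\Rightarrow$\ref{l:12ei}, I would argue pointwise: for every $x\in\XX$, since $-f^*(x^*)\geq 0$ whenever $x^*\in\dom f^*$ and $\pair{x}{x^*}-f^*(x^*)=\minf$ whenever $x^*\notin\dom f^*$, one has $\sigma_{\dom f^*}(x)=\sup_{x^*\in\dom f^*}\pair{x}{x^*}\leq\sup_{x^*\in\XX^*}\big(\pair{x}{x^*}-f^*(x^*)\big)=f^{**}(x)$; combining this with $\rec f=\sigma_{\dom f^*}$ and $f^{**}=f$ gives $\rec f\leq f$. I do not anticipate a genuine obstacle here; the points that need care are the extended-real arithmetic (notably $\lambda\cdot\pinf=\pinf$ for $\lambda>0$, used implicitly when moving between the two formulations of \ref{l:12eii}), the justification of the positive homogeneity of $\rec f$, and the observation that ``$\lambda f^*\leq f^*$ on $\XX^*$'' is equivalent to ``$f^*\leq 0$ on $\dom f^*$'', which relies on $f^*$ never taking the value $\minf$ --- ensured by $f\in\Gamma_0(\XX)$.
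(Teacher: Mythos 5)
Your proposal is correct and follows essentially the same route as the paper: the same cyclic chain of implications, the same key ingredients ($\epi\rec f=\rec\epi f$, $\rec f=\sigma_{\dom f^*}$, and $f=f^{**}$), and computations that amount to the paper's. The only differences are presentational --- you prove \ref{l:12ei}$\Rightarrow$\ref{l:12eii} via the inequality $f(x+y)\leq f(x)+(\rec f)(y)$ together with positive homogeneity of $\rec f$ instead of the paper's epigraph inclusion $(\lambda-1)\epi f\subset\rec\epi f$, and you obtain \ref{l:12eii}$\Rightarrow$\ref{l:12eiii} by conjugating $f(\lambda\cdot)\leq\lambda f$ rather than by the paper's direct supremum manipulation, which yields the same estimate $\lambda f^*\leq f^*$ on $\dom f^*$.
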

\begin{proof}
\ref{l:12ei}$\Rightarrow$\ref{l:12eii}:
Without loss of generality, let $\lambda\in\left]1,\pinf\right[$
and $x\in\dom f$.
Arguing as in the proof of \cite[Proposition~8(iii)]{Zali08},
we observe that Lemma~\ref{l:pjlaurent}\ref{l:pjlaurenti} yields
\begin{equation}
(\lambda-1)\epi f\subset(\lambda-1)\epi\rec f
=(\lambda-1)\rec\epi f=\rec\epi f.
\end{equation}
Hence, $(\lambda x,\lambda f(x))
=(x,f(x))+(\lambda-1)(x,f(x))\in\epi f+\rec\epi f=\epi f$.
Therefore, $f(\lambda x)\leq\lambda f(x)$.

\ref{l:12eii}$\Rightarrow$\ref{l:12eiii}: 
Let $x^*\in\dom f^*$. Since
\begin{align}
f^*(x^*)
&=\sup_{x\in\XX}\big(\pair{x}{x^*}-f(x)\big)\nonumber\\
&=\sup_{y\in\XX}\sup_{\lambda\in\left[1,\pinf\right[}
\big(\pair{\lambda y}{x^*}-f(\lambda y)\big)\nonumber\\
&\geq\sup_{\lambda\in\left[1,\pinf\right[}\;\lambda\,\sup_{y\in\XX}
\big(\pair{y}{x^*}-f(y)\big)\nonumber\\
&=\sup_{\lambda\in\left[1,\pinf\right[}\lambda\:
f^*(x^*),
\end{align}
we have $f^*(x^*)>0$ $\Rightarrow$ $f^*(x^*)=\pinf$, which
contradicts the fact that $x^*\in\dom f^*$.

\ref{l:12eiii}$\Rightarrow $\ref{l:12ei}:
In view of Lemma~\ref{l:pjlaurent}\ref{l:pjlaurentii}, 
$(\forall x^*\in\dom f^*)$ $f^*(x^*)\in\RM$
$\Rightarrow$
$(\forall x^*\in\dom f^*)$ $x^*\leq f$ 
$\Rightarrow$ $\sigma_{\dom f ^*}\leq f$
$\Rightarrow$ $\rec f\leq f$.
\end{proof}

\section{The $\blacktriangledown$ and $\blacktriangleup$ 
envelopes}
\label{sec:3}

We introduce two types of envelope of a function that
will be essential in our analysis.

\begin{definition}
\label{d:hautbas}
Let $f\colon\XX\to\RXX$. Then 
\begin{equation}
\label{e:pbas}
\pbas{f}\colon\XX\to\RX\colon x\mapsto
\begin{cases}
f(x),&\text{if}\:\:\minf<f(x)<0;\\
\pinf,&\text{otherwise}
\end{cases}
\end{equation}
and the $\blacktriangledown$ envelope of $f$ is
\begin{equation}
\label{e:bas}
\bas{f}=f^{{\tiny\mbox{$\vee$}}**}.
\end{equation}
Furthermore, 
\begin{equation}
\label{e:phaut}
\phaut{f}\colon\XX\to\RX\colon x\mapsto
\begin{cases}
f(x),&\text{if}\:\:0<f(x)<\pinf;\\
\pinf,&\text{otherwise}
\end{cases}
\end{equation}
and the $\blacktriangleup$ envelope of $f$ is
\begin{equation}
\label{e:haut}
\haut{f}=f^{{\tiny\mbox{$\wedge$}}**}.
\end{equation}
\end{definition}

Let us examine some key properties of these envelopes.

\begin{lemma}
\label{l:12}
Let $f\colon\XX\mapsto\RXX$ be such that $E=f^{-1}(\RMM)\neq\emp$.
Then the following holds:
\begin{enumerate}
\item
\label{l:120+} 
Suppose that $\cam\pbas{f}\neq\emp$.
Then $\bas{f}\in\Gamma_{0}(\XX)$,
$\dom\bas{f}\subset\cconv E$, and 
$\bas{f}(\dom\bas{f})\subset\RM$. 
\end{enumerate}
Now suppose that, in addition, $f\in\Gamma_0(\XX)$. Then the
following are satisfied:
\begin{enumerate}[resume]
\item
\label{l:12i} 
$\overline{E}=f^{-1}(\RM)$.
\item
\label{l:12ii} 
$\bas{f}=f+\iota_{f^{-1}(\RM)}$.
\item
\label{l:12ii+} 
$\dom\bas{f}=f^{-1}(\RM)$.
\item
\label{l:12v} 
${\bas{f}}^{-1}(\{0\})=f^{-1}(\{0\})$.
\item
\label{l:12iii} 
$E=(\bas{f})^{-1}(\RMM)$ and $\bas{f}|_E=f|_E$.
\end{enumerate}
\end{lemma}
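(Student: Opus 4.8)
The plan is to prove~\ref{l:120+} first and then, under the additional hypothesis $f\in\Gamma_0(\XX)$, to establish~\ref{l:12i} together with the pivotal identity~\ref{l:12ii}, from which~\ref{l:12ii+}, \ref{l:12v}, and~\ref{l:12iii} are immediate. Throughout I would write $g=\pbas f$, so that $\dom g=E$, $g$ takes its values in $\RMM\cup\{\pinf\}$, and $\bas f=g^{**}$. For~\ref{l:120+}, whose hypothesis is $\cam g\neq\emp$: since $E\neq\emp$ we have $g\not\equiv\pinf$, and as $g\colon\XX\to\RX$, Lemma~\ref{l:9}\ref{l:9i-} applies and gives $\bas f=g^{**}\in\Gamma_0(\XX)$, while Lemma~\ref{l:9}\ref{l:9ii} gives $\cdom\bas f=\cconv\dom g=\cconv E$, hence $\dom\bas f\subseteq\cconv E$. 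For the bound on the values I would use $\bas f\leq g$ (Lemma~\ref{l:6}\ref{l:6i-}): for a finite convex combination $x=\sum_i\lambda_ix_i$ with $x_i\in E$, convexity of $\bas f$ and $\bas f(x_i)\leq g(x_i)=f(x_i)<0$ give $\bas f(x)\leq\sum_i\lambda_if(x_i)<0$; consequently $\lev{\leq 0}\bas f$, which is closed and convex (as $\bas f\in\Gamma_0(\XX)$) and contains $\conv E$, contains $\cconv E\supseteq\dom\bas f$, so $\bas f\leq 0$ on $\dom\bas f$ and, by properness, $\bas f(\dom\bas f)\subseteq\RM$.

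Next, assuming $f\in\Gamma_0(\XX)$, I would record that $\pbas f\geq f$ (they agree on $E$, and $\pbas f=\pinf$ elsewhere), so any continuous affine minorant of $f$---one exists by Lemma~\ref{l:6}\ref{l:6iv}---also minorizes $\pbas f$; hence $\cam\pbas f\neq\emp$, so~\ref{l:120+} applies and $\bas f=\breve{\pbas f}$ by Lemma~\ref{l:9}\ref{l:9i}. For~\ref{l:12i}: $f^{-1}(\RM)=\lev{\leq 0}f$ is closed (because $f$ is lower semicontinuous) and contains $E$, so $\overline E\subseteq f^{-1}(\RM)$; conversely, given $x_0\in f^{-1}(\RM)$ and a fixed $x_1\in E$, convexity gives $f\big((1-t)x_1+tx_0\big)\leq(1-t)f(x_1)+tf(x_0)<0$ for every $t\in\lzeroun$, so $(1-t)x_1+tx_0\in E$, and letting $t\uparrow 1$ puts $x_0$ in $\overline E$.

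For~\ref{l:12ii} I would put $h=f+\iota_{f^{-1}(\RM)}$; since $f^{-1}(\RM)$ is closed, convex, and contains $E$, $h\in\Gamma_0(\XX)$, so $h=\breve h$. One inequality is easy: $h$ is a lower semicontinuous convex minorant of $\pbas f$ (they agree on $E$, and $\pbas f=\pinf$ off $E$), whence $h\leq\breve{\pbas f}=\bas f$. For the reverse, fix $x_1\in E$; for every $x_0\in f^{-1}(\RM)$ and $t\in\lzeroun$ the point $x_t=(1-t)x_1+tx_0$ lies in $E$ (as in~\ref{l:12i}), so $\bas f(x_t)\leq\pbas f(x_t)=f(x_t)\leq(1-t)f(x_1)+tf(x_0)$; since $x_t\to x_0$ and the right-hand side tends to $f(x_0)$ as $t\uparrow 1$, lower semicontinuity of $\bas f$ yields $\bas f(x_0)\leq f(x_0)$. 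Thus $\bas f\leq f$ on $f^{-1}(\RM)$ and $\bas f\leq\pinf=h$ off it, so $\bas f\leq h$ and $\bas f=h=f+\iota_{f^{-1}(\RM)}$. Then~\ref{l:12ii+}, \ref{l:12v}, and~\ref{l:12iii} follow at once: $\dom\bas f=\dom f\cap f^{-1}(\RM)=f^{-1}(\RM)$; $\bas f(x)=0\Leftrightarrow f(x)=0$ and $\bas f(x)\in\RMM\Leftrightarrow f(x)\in\RMM$, because $f^{-1}(\{0\})$ and $E$ are both contained in $f^{-1}(\RM)$; and $\bas f=f$ on $E\subseteq f^{-1}(\RM)$.

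The step I expect to be the main obstacle is controlling $\bas f$ at the boundary of $E$: proving $\bas f(\dom\bas f)\subseteq\RM$ in~\ref{l:120+} and, more delicately, $\bas f\leq f$ on $f^{-1}(\{0\})$ in~\ref{l:12ii} (which at the same time secures $f^{-1}(\{0\})\subseteq\dom\bas f$). A priori, passing to the biconjugate could raise the envelope above the values of $f$ near the boundary; the only leverage is lower semicontinuity of $\bas f$, the containment $\dom\bas f\subseteq\cconv E$ furnished by Lemma~\ref{l:9}\ref{l:9ii}, and---in the convex case---the convexity inequality along the segment joining a point $x_1\in E$ to the boundary point. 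Everything else is routine bookkeeping with Lemmas~\ref{l:6} and~\ref{l:9}.
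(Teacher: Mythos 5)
Your proof is correct. Parts \ref{l:120+}, \ref{l:12i}, and \ref{l:12ii+}--\ref{l:12iii} follow the paper's route almost verbatim (the only cosmetic difference in \ref{l:120+} is that you pass through $\conv E\subset\lev{\leq 0}\bas{f}$ rather than through $\cconv(\bas{f})^{-1}(\RM)=(\bas{f})^{-1}(\RM)$, which is the same mechanism). The genuine divergence is in the key identity \ref{l:12ii}: the paper works on the dual side, showing $f^{{\tiny\mbox{$\vee$}}*}=\big(f+\iota_{f^{-1}(\RM)}\big)^*$ -- the delicate boundary case $f(x)=0$ being handled via continuity of $x^*$ and $\sup x^*(\overline{E})=\sup x^*(E)$ -- and then biconjugates, using that $f+\iota_{f^{-1}(\RM)}\in\Gamma_0(\XX)$. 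You instead argue on the primal side: $h=f+\iota_{f^{-1}(\RM)}$ is a lower semicontinuous convex minorant of $\pbas{f}$, hence $h\leq\bas{f}$ (your appeal to Lemma~\ref{l:9}\ref{l:9i} via $\cam\pbas{f}\supset\cam f\neq\emp$ is sound; monotonicity of biconjugation would also do), while the reverse inequality at boundary points is obtained from lower semicontinuity of $\bas{f}$ along the segment $x_t=(1-t)x_1+tx_0$ with $x_1\in E$, which stays in $E$ by the same convexity computation that proves \ref{l:12i}. Both proofs thus rest on the same geometric fact (segments from $E$ to $f^{-1}(\RM)$ remain in $E$); yours avoids any conjugate computation and is slightly more elementary, whereas the paper's dual computation records the formula for $f^{{\tiny\mbox{$\vee$}}*}$ along the way. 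Your identification of the boundary behaviour of $\bas{f}$ as the only nontrivial obstacle, and your treatment of it, match the actual difficulty.
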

\begin{proof} 
\ref{l:120+}: 
The fact that $\bas{f}\in\Gamma_{0}(\XX)$ follows from 
\eqref{e:bas} and Lemma~\ref{l:9}\ref{l:9i-}.
Next, since ${\pbas{f}}(\XX)\subset\RMM\cup\{\pinf\}$, 
we deduce from \eqref{e:bas}, Lemma~\ref{l:9}\ref{l:9ii}, and 
\eqref{e:pbas} that
\begin{align}
\dom\bas{f}&\subset\cdom\bas{f}\nonumber\\
&=\cconv\,\dom\pbas{f}\nonumber\\
&=\cconv(\pbas{f})^{-1}(\RMM)\label{e:1212dd}\\
&=\cconv E.
\end{align}
On the other hand, Lemma~\ref{l:6}\ref{l:6i-} 
yields $\bas{f}\leq\pbas{f}$. Hence, we derive 
from \eqref{e:1212dd} that
\begin{align}
\dom\bas{f}& \subset
\cconv(\bas{f})^{-1}(\RMM)\nonumber\\
&\subset\cconv(\bas{f})^{-1}(\RM)\nonumber\\
&=(\bas{f})^{-1}(\RM)
\end{align}
since $\bas{f}\in\Gamma_{0}(\XX)$.

\ref{l:12i}: Since $f$ is lower semicontinuous, 
$f^{-1}(\RM)$ is closed. Therefore
$E\subset f^{-1}(\RM)$ $\Rightarrow$
$\overline{E}\subset f^{-1}(\RM)$. Conversely, take 
$x_0\in f^{-1}(\RM)$ and $x\in E$, and set
$(\forall\alpha\in\zeroun)$ $x_\alpha=\alpha x+(1-\alpha)x_0$.
Since $f$ is convex $(\forall\alpha\in\zeroun)$ 
$f(x_\alpha)\leq\alpha f(x)+(1-\alpha)f(x_0)<0$, hence
$x_\alpha\in E$. Thus 
$x_0=\lim_{\alpha\downarrow 0}x_\alpha\in\overline{E}$.

\ref{l:12ii}: 
Let $x^*\in\XX^*$ and let us show that
$f^{{\tiny\mbox{$\vee$}}*}(x^*)=\sup(x^*-f)(\overline{E})$. Since
$f^{{\tiny\mbox{$\vee$}}*}(x^*)=\sup(x^*-f)(E)$, we have
$f^{{\tiny\mbox{$\vee$}}*}(x^*)\leq\sup(x^*-f)(\overline{E})$. 
To get the reverse inequality let 
$x\in\overline{E}$. We need to show that
$\pair{x}{x^*}-f(x)\leq f^{{\tiny\mbox{$\vee$}}*}(x^*)$. 
It is enough to assume that
$x\in\overline{E}\smallsetminus E$, which yields $f(x)=0$.
In addition, since $x^*$ is lower semicontinuous and 
$\pbas{f}|_E<0$, 
\begin{equation}
\pair{x}{x^*}-f(x)=\pair{x}{x^*}\leq\sup x^*(\overline{E})
=\sup x^*(E)\leq f^{{\tiny\mbox{$\vee$}}*}(x^*).
\end{equation}
Thus,
\begin{equation}
f^{{\tiny\mbox{$\vee$}}*}(x^*)
=\sup(x^*-f)(\overline{E})
=\big(f+\iota_{\overline{E}}\big)^*(x^*)
=\big(f+\iota_{f^{-1}(\RM)}\big)^*(x^*).
\end{equation}
On the other hand, since $E\neq\emp$, using \ref{l:12i}, we see
that
\begin{equation}
\label{e:vdg}
f+\iota_{f^{-1}(\RM)}=f+\iota_{\overline{E}}\in\Gamma_{0}(\XX).
\end{equation}
Altogether, \eqref{e:vdg} and Lemma~\ref{l:6}\ref{l:6iv} yield
$\bas{f}=f^{{\tiny\mbox{$\vee$}}**}=(f+\iota_{f^{-1}(\RM)})^{**}=
f+\iota_{f^{-1}(\RM)}$.

\ref{l:12ii+}--\ref{l:12iii}: These follow from \ref{l:12ii}.
\end{proof}

\begin{lemma}
\label{l:24}
Let $f\colon\XX\to\RXX$ be such that $F=f^{-1}(\RPP)\neq\emp$. 
Then the following holds:
\begin{enumerate}
\item 
\label{l:24ii+}
$\haut{f}\in\Gamma_0(\XX)$, $\dom\haut{f}\subset\cconv F$, and
$\haut{f}(\dom\haut{f})\subset\RP$.
\end{enumerate}
Now suppose that, in addition, $f\in\Gamma_0(\XX)$. Then the
following are satisfied:
\begin{enumerate}[resume]
\item
\label{l:24ii} 
$\haut{f}=\max\{f,0\}+\iota_{\cconv F}$.
\item
\label{l:24iv} 
$\dom\haut{f}=\dom f\cap\cconv F\supset F$. 
\item
\label{l:24ii++} 
$(\haut{f})^{-1}(\{0\})=f^{-1}(\RM)\cap\cconv F$.
\item
\label{l:24v}
$(\haut{f})^{-1}(\{0\})=\emp$ $\Leftrightarrow$ $f^{-1}(\RM)=\emp$.
\item
\label{l:24iii} 
$F=(\haut{f})^{-1}(\RPP)$ and $f|_{F}=\haut{f}|_{F}$.
\end{enumerate}
\end{lemma}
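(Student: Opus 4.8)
The plan is to parallel the proof of Lemma~\ref{l:12}, while exploiting a feature special to the $\blacktriangle$ envelope: since $\phaut f$ takes its finite values in $\RPP$, the zero function lies in $\cam\phaut f$, so $\cam\phaut f\neq\emp$ with no extra hypothesis (unlike in Lemma~\ref{l:12}\ref{l:120+}) and $\phaut f\geq 0$, hence $\haut f=\phaut{f}^{**}\geq 0$. For \ref{l:24ii+}: since $F\neq\emp$ we have $\phaut f\not\equiv\pinf$, so $\haut f\in\Gamma_0(\XX)$ by Lemma~\ref{l:9}\ref{l:9i-}; since $\dom\phaut f=f^{-1}(\RPP)=F$, Lemma~\ref{l:9}\ref{l:9ii} gives $\cdom\haut f=\cconv F$, whence $\dom\haut f\subset\cconv F$; and $\haut f\geq 0$ together with the finiteness of $\haut f$ on $\dom\haut f$ gives $\haut f(\dom\haut f)\subset\RP$.

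For \ref{l:24ii}, set $g=\max\{f,0\}+\iota_{\cconv F}$. Then $g\in\Gamma_0(\XX)$ (it is convex, lower semicontinuous, nonnegative, with $\dom g=\dom f\cap\cconv F\supset F$), and $g\leq\phaut f$ (equality holds on $F$, where $f>0$; the inequality is trivial off $F$), so, $\breve{\phaut f}$ being the largest lower semicontinuous convex minorant of $\phaut f$, Lemma~\ref{l:9}\ref{l:9i} yields $g\leq\breve{\phaut f}=\haut f$. For the reverse inequality I would prove $\phaut{f}^*=g^*$; then Lemma~\ref{l:6}\ref{l:6iv} gives $\haut f=\phaut{f}^{**}=g^{**}=g$. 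Here $\phaut{f}^*\leq g^*$ follows from $g\leq\phaut f$, while splitting the supremum defining $g^*$ over $F$ and over $\cconv F\smallsetminus F$ gives $g^*=\max\{\phaut{f}^*,\sigma_{\cconv F\cap f^{-1}(\RM)}\}$, so it remains to show $\sigma_{\cconv F\cap f^{-1}(\RM)}\leq\phaut{f}^*$.

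This last inequality is the crux of the proof and, I expect, the main obstacle; it rests on two approximation facts. First, every $w\in\overline F$ with $f(w)=0$ is a limit of a net $(u_\nu)$ in $F$ with $f(u_\nu)\to 0$: otherwise $f$ would be bounded below by some $c>0$ on $F\cap V$ for a convex neighborhood $V$ of $w$, and taking $w_1\in F\cap V$ and using that $f$ is continuous on $[w,w_1]\subset\dom f$ (Lemma~\ref{l:15}) one sees that $f$ must take a value in $\left]0,c\right[$ at some point of $[w,w_1]\subset V$, which then lies in $F\cap V$ --- a contradiction. Passing to the limit in $\pair{u_\nu}{x^*}-f(u_\nu)\leq\phaut{f}^*(x^*)$ yields $\sigma_{\overline F\cap f^{-1}(\{0\})}\leq\phaut{f}^*$. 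Second, $\cconv F\cap f^{-1}(\RM)=\cconv\big(\overline F\cap f^{-1}(\{0\})\big)$: the inclusion $\supset$ is immediate, and for $\subset$, given $x$ in the left-hand side one approximates it by convex combinations $\sum_i\lambda_iz_i$ of points $z_i\in F$, lets $w_i$ be the point at which $f$ first vanishes on $[z_i,x]\subset\dom f$ (so $w_i\in\overline F\cap f^{-1}(\{0\})$), writes each $z_i$ as the corresponding affine combination of $w_i$ and $x$, and substitutes; a direct computation re-expresses $\sum_i\lambda_iz_i$ as a convex combination of $x$ and of a point $\overline w\in\conv\{w_i\}$, which forces $\overline w\to x$ and places $x$ in $\cconv(\overline F\cap f^{-1}(\{0\}))$. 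Combining these with Lemma~\ref{l:99} gives $\sigma_{\cconv F\cap f^{-1}(\RM)}=\sigma_{\cconv(\overline F\cap f^{-1}(\{0\}))}=\sigma_{\overline F\cap f^{-1}(\{0\})}\leq\phaut{f}^*$, completing \ref{l:24ii}. The subtlety is precisely that $\cconv F$ can be strictly larger than $\overline F$, so both approximations are genuinely needed.

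The remaining items follow routinely from \ref{l:24ii}. For \ref{l:24iv}, $\dom\haut f=\dom(\max\{f,0\})\cap\cconv F=\dom f\cap\cconv F$, which contains $F$. For \ref{l:24ii++}, $(\haut f)^{-1}(\{0\})=\menge{x\in\cconv F}{\max\{f(x),0\}=0}=\cconv F\cap f^{-1}(\RM)$. For \ref{l:24v}, if $f^{-1}(\RM)=\emp$ then $(\haut f)^{-1}(\{0\})=\emp$ by \ref{l:24ii++}; conversely, given $z\in f^{-1}(\RM)$ and $z_0\in F$, the segment argument above applied to $[z,z_0]$ produces a point of $\overline F\cap f^{-1}(\{0\})\subset\cconv F\cap f^{-1}(\RM)$, so $(\haut f)^{-1}(\{0\})\neq\emp$. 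Finally, for \ref{l:24iii}, $\max\{f(x),0\}\in\RPP$ if and only if $f(x)\in\RPP$, i.e. $x\in F$, and $F\subset\cconv F$; hence $(\haut f)^{-1}(\RPP)=F$ and, for $x\in F$, $\haut f(x)=\max\{f(x),0\}=f(x)$.
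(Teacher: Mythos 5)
Your proposal is correct, and for the central item \ref{l:24ii} it takes a genuinely different, dual route from the paper. The paper sandwiches $\max\{f,0\}+\iota_{\cconv F}\leq\haut{f}\leq\max\{f,0\}+\iota_{F}$ and then closes the gap primally: for $x\in(\dom f\cap\cconv F)\smallsetminus F$ it uses Lemma~\ref{l:15} to manufacture points of $F$ with arbitrarily small positive values, first writing $x\in(\conv F)\smallsetminus F$ as a convex combination of such points, and then reaching $\cconv F$ by a net argument combined with the lower semicontinuity of $\haut{f}$. You instead work with conjugates: setting $g=\max\{f,0\}+\iota_{\cconv F}$, you split $g^*=\max\big\{(\phaut{f})^*,\sigma_{\cconv F\cap f^{-1}(\RM)}\big\}$ and reduce everything to the bound $\sigma_{\cconv F\cap f^{-1}(\RM)}\leq(\phaut{f})^*$, proved via two approximation facts (a net in $F$ with $f\to 0$ at each point of $\overline{F}\cap f^{-1}(\{0\})$, and the identification $\cconv F\cap f^{-1}(\RM)=\cconv\big(\overline{F}\cap f^{-1}(\{0\})\big)$), before biconjugating with Lemma~\ref{l:6}\ref{l:6iv}. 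Both arguments rest on segment continuity (Lemma~\ref{l:15}) and nets adapted to the locally convex setting; the paper's version is shorter and more direct, while yours additionally yields the identity $(\haut{f})^{-1}(\{0\})=\cconv\big(\overline{F}\cap f^{-1}(\{0\})\big)$ and confines the topology to support functions (Lemma~\ref{l:99}). Two small points to tighten, neither a real gap: in the recombination step the correct relation is that $\overline{w}$ is a convex combination of $u=\sum_i\lambda_iz_i$ and $x$, namely $\overline{w}=\tfrac{1}{1+B}\,u+\tfrac{B}{1+B}\,x$ with $B=\sum_i\lambda_it_i/(1-t_i)$, where $w_i=(1-t_i)z_i+t_ix$ (not the other way around, as you wrote); this is exactly what gives $\overline{w}-x=\tfrac{1}{1+B}(u-x)\to 0$ via balanced neighborhoods. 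You should also dispose of the degenerate case in which the first zero on $[z_i,x]$ is $x$ itself (possible only when $f(x)=0$), where $x\in\overline{F}\cap f^{-1}(\{0\})$ directly. Your treatments of \ref{l:24ii+} and of \ref{l:24iv}--\ref{l:24iii} coincide with the paper's.
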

\begin{proof}
\ref{l:24ii+}: 
Set $\theta\colon\XX\to\RR\colon x\mapsto 0$.
Since $\phaut{f}>\theta\in\cam f$ and $F\neq\emp$,
Lemma~\ref{l:9}\ref{l:9i-} asserts that $\haut{f}\in\Gamma_0(\XX)$.
In addition, \eqref{e:phaut}, \eqref{e:haut}, and
Lemma~\ref{l:9}\ref{l:9ii} yield
\begin{equation}
\label{e:98++}
\dom\haut{f}\subset
\cdom\haut{f}=\cconv\dom\phaut{f}=\cconv F 
\end{equation}
and
\begin{equation}
\label{e:81++}
\big(\forall x\in\dom\haut{f}\big)\quad 
0=\theta(x)=\theta^{**}(x)\leq f^{{\tiny\mbox{$\wedge$}}**}(x)
=\haut{f}(x)<\pinf.
\end{equation}

\ref{l:24ii}: 
Set $g=\max\{f,0\}$. 
Since $F\neq\emp$, we have $g\in\Gamma_0(\XX)$ and
$\pinf\not\equiv\phaut{f}=g+\iota_F\geq g+\iota_{\cconv F}
\in\Gamma_0(\XX)$. Hence, appealing to Lemma~\ref{l:6}\ref{l:6iv},
we obtain
\begin{equation}
g+\iota_{\cconv{F}}\leq\haut{f}\leq g+\iota_{F}. 
\end{equation} 
Let $x\in\XX$. If $x\in F$, then
\begin{equation}
\label{e:g46}
g(x)+\iota_{\cconv{F}}(x)=\haut{f}(x)=
g(x)+\iota_{F}(x)=g(x). 
\end{equation}
If $x\notin\cconv F$ or $x\notin\dom g$, then
$g(x)+\iota_{\cconv{F}}(x)=\haut{f}(x)=
g(x)+\iota_{F}(x)=\pinf$. Now, suppose that 
$x\in(\dom g\cap\cconv F)\smallsetminus F$. Then, since
$g(\XX\smallsetminus F)\subset\{0,\pinf\}$, we have 
\begin{equation}
\label{e:gzero}
g(x)=0.
\end{equation} 
It remains to show that $\haut{f}(x)=0$. To this end, fix 
$\varepsilon\in\RPP$. Suppose first that
$x\in(\conv F)\smallsetminus F$. 
Since $x\in\conv F$, there exist finite families 
$(x_i)_{i\in I}$ in $F$ and $(\alpha_i)_{i\in I}$ in $\zeroun$
such that $\sum_{i\in I}\alpha_i=1$ and 
$x=\sum_{i\in I}\alpha_i x_i$. Hence,
it follows from Lemma~\ref{l:15}, \eqref{e:g46},
and \eqref{e:gzero} that, for every 
$i\in I$, there exists $z_i\in\left]x,x_i\right[\cap F$ 
such that $\haut{f}(z_i)=g(z_i)\in\left]0,\varepsilon\right]$,
say $z_i=(1-\eta_i)x+\eta_ix_i$ for some $\eta_i\in\zeroun$.
Therefore, for every $i\in I$,
$x_i=\eta_i^{-1}z_i+(1-\eta_i^{-1})x$.
In turn, $x=\sum_{i\in I}\alpha_i x_i=\sum_{i\in I}\beta_i z_i$,
where, for every $i\in I$, 
$\beta_i=\alpha_i\eta_i^{-1}/(\sum_{j\in I}\alpha_j\eta_j^{-1})>0$.
Since $\sum_{i\in I}\beta_i=1$ and 
$\{z_i\}_{i\in I}\subset\lev{\leq\varepsilon}g$, we have
$x\in\lev{\leq\varepsilon}g$ and
$0\leq\haut{f}(x)\leq\sum_{i\in I}\beta_i\haut{f}(z_i)
=\sum_{i\in I}\beta_i g(z_i)\leq\varepsilon$. 
Thus, $\haut{f}(x)=0$. 
Altogether, in view of \eqref{e:gzero},
since $x$ is arbitrarily chosen in 
$(\conv F)\smallsetminus F$, we have 
\begin{equation}
\label{e:passing}
\big(\forall u\in(\conv F)\smallsetminus F\big)\quad g(u)=0
\quad\text{and}\quad 
\haut{f}(u)=0.
\end{equation}
Next, suppose that $x\in(\cconv F)\smallsetminus\conv F$.
Then there exists a net
$(u_a)_{a\in A}$ in $\conv F$ such that $u_a\to x$. For 
every $a\in A$, we consider the following alternatives.
\begin{itemize}
\item 
$u_a\in F$: Since $g(x)=0$ and $g(u_a)\in\RPP$, \eqref{e:g46}
and Lemma~\ref{l:15} guarantee the existence of 
$\widetilde{u}_a\in\left]x,u_a\right[\cap F$ such that 
$\haut{f}(\widetilde{u}_a)=
g(\widetilde{u}_a)\in\left]0,\varepsilon\right]$.
\item 
$u_a\notin F$: Set $\widetilde{u}_a=u_a$. It follows 
from \eqref{e:passing} that $g(\widetilde{u}_a)=0$
and $\haut{f}(\widetilde{u}_a)=0$. 
\end{itemize}
By construction, for every $a\in A$, 
$\widetilde{u}_a\in\conv F$ and, 
if $u_a$ is in a convex neighborhood of $x$, so is
$\widetilde{u}_a$. Since $\XX$ is locally convex, we obtain
$\widetilde{u}_a\to x$. By lower semicontinuity of $\haut{f}$, we
conclude that $0\leq\haut{f}(x)\leq\varliminf
\haut{f}(\widetilde{u}_a)\leq\varepsilon$. This shows that 
$\haut{f}(x)=0$.

\ref{l:24iv}\&\ref{l:24ii++}: These follow from \ref{l:24ii}.

\ref{l:24v}: 
Suppose that $f^{-1}(\RM)\neq\emp$, let $x\in f^{-1}(\RM)$, 
and let $z\in F$. By Lemma~\ref{l:15}, 
$[x,z[\,\cap\,f^{-1}(\{0\})\cap\overline{F}\neq\emp$
and, hence, \ref{l:24ii++} yields $(\haut{f})^{-1}(\{0\})\neq\emp$
since $\overline{F}\subset\cconv F$.
The reverse implication is clear by \ref{l:24ii++}.

\ref{l:24iii}: 
These follow from \ref{l:24ii}.
\end{proof}

\begin{remark} 
In the setting of Lemma~\ref{l:24}, we can have 
$f\in\Gamma_0(\XX)$ and $(\cconv F)\cap f^{-1}(\RMM)\neq\emp$.
Take, for instance, $\XX=\RR^2$, and set 
\begin{equation}
f\colon\XX\to\RX\colon(\xi,\eta)\mapsto
\begin{cases}
\xi^2/\eta-1,&\text{if}\:\:\eta>0;\\
-1,&\text{if}\:\:\xi=\eta=0;\\
\pinf,&\text{otherwise}.
\end{cases}
\end{equation}
Since $f+1$ is an instance of \eqref{e:0}, we have
$f\in\Gamma_0(\XX)$.
For every $n\in\NN$, setting $x_n=(2^{-n},2^{-2n-1})$ yields
$f(x_n)=1$. We obtain $F\ni x_n\to(0,0)\in\cconv F$ and 
$f(0,0)=-1$.
\end{remark}

\begin{lemma} 
\label{l:14}
Let $f\colon\XX\to\RXX$ be such that $F=f^{-1}(\RPP)\neq\emp$ and
assume that $\cam\pbas{(-f)}\neq\emp$. Then the following hold: 
\begin{enumerate}
\item
\label{l:14i-}
$-\bas{(-f)}<\pinf$. 
\item
\label{l:14i}
$0\leq\haut{f}|_{\cconv F}\leq -\bas{(-f)}|_{\cconv F}$. 
\item
\label{l:14ii}
$\dom\haut{f}=\cconv F$.
\end{enumerate}
\end{lemma}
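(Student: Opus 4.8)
The plan is to obtain the three assertions in sequence, using the already-established Lemma~\ref{l:24}\ref{l:24ii+} for the $\blacktriangle$ envelope of $f$ and Lemma~\ref{l:12}\ref{l:120+} for the $\blacktriangledown$ envelope of $-f$, together with the elementary identity relating $\phaut{f}$ and $\pbas{(-f)}$ on $F$. Note first that $(-f)^{-1}(\RMM)=f^{-1}(\RPP)=F\neq\emp$, so Lemma~\ref{l:12}\ref{l:120+} applies to $-f$: under the hypothesis $\cam\pbas{(-f)}\neq\emp$ it gives $\bas{(-f)}\in\Gamma_0(\XX)$ (in particular $\bas{(-f)}>\minf$ everywhere), $\dom\bas{(-f)}\subset\cconv E$ with $E=F$, and $\bas{(-f)}(\dom\bas{(-f)})\subset\RM$. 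Likewise Lemma~\ref{l:24}\ref{l:24ii+} gives $\haut{f}\in\Gamma_0(\XX)$, $\dom\haut{f}\subset\cconv F$, and $\haut{f}(\dom\haut{f})\subset\RP$.

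For \ref{l:14i-}: since $\bas{(-f)}\in\Gamma_0(\XX)$ it never takes the value $\minf$, hence $-\bas{(-f)}$ never takes the value $\pinf$, i.e.\ $-\bas{(-f)}<\pinf$ on all of $\XX$. For \ref{l:14i}: on $F$ we have $0<f(x)<\pinf$, so $\phaut{f}(x)=f(x)$ and $\pbas{(-f)}(x)=-f(x)$, whence $\pbas{(-f)}=-\phaut{f}$ on $F$ (and both are $\pinf$, resp.\ $\phaut{f}$ is $\pinf$ while $\pbas{(-f)}$ may be anything, off $F$ — but we only need the inequality $\pbas{(-f)}\geq-\phaut{f}$, which holds everywhere since $\phaut{f}\geq 0$ forces $-\phaut{f}\leq 0\leq\pbas{(-f)}$ wherever $\pbas{(-f)}=\pinf$, and equality holds on $F$). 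Thus $-\phaut{f}\leq\pbas{(-f)}$ pointwise; taking biconjugates (which is order-preserving by Lemma~\ref{l:6}\ref{l:6i-} applied twice, or directly: if $g_1\leq g_2$ then $g_1^{**}\leq g_2^{**}$) and using that $(-\phaut{f})^{**}\geq -(\phaut{f})^{**}=-\haut{f}$ — because for any $h$, $(-h)^{**}\geq-h^{**}$ is false in general, so instead I use $-\haut{f}=-(\phaut{f})^{**}$ and note $(\phaut{f})^{**}$ is the largest l.s.c.\ convex minorant of $\phaut{f}$, hence $-(\phaut{f})^{**}$ is the smallest l.s.c.\ concave majorant of $-\phaut{f}$, which dominates $\bas{(-f)}=(\pbas{(-f)})^{**}\leq\pbas{(-f)}$ wherever the latter is $\leq -\phaut{f}$ — this needs care; the cleaner route is: $\bas{(-f)}=(\pbas{(-f)})^{**}$ is the largest l.s.c.\ convex minorant of $\pbas{(-f)}\geq-\phaut{f}$, so $\bas{(-f)}\geq$ the largest l.s.c.\ convex minorant of $-\phaut{f}$, i.e.\ $-\bas{(-f)}\leq$ the smallest l.s.c.\ concave majorant of $\phaut{f}$, which is $\geq(\phaut{f})^{**}=\haut{f}$ wherever $\haut{f}$ is finite. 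Restricting to $\cconv F$, where $\haut{f}$ is $\RP$-valued by Lemma~\ref{l:24}\ref{l:24ii+}, and where on $F$ itself $\haut{f}(x)=f(x)=-\bas{(-f)}(x)$ by Lemma~\ref{l:24}\ref{l:24iii} and Lemma~\ref{l:12}\ref{l:12iii}, a convexity/concavity sandwich argument over convex combinations of points of $F$ (mimicking the one in the proof of Lemma~\ref{l:24}\ref{l:24ii}, then passing to closures by local convexity of $\XX$) yields $0\leq\haut{f}\leq-\bas{(-f)}$ on $\cconv F$.

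For \ref{l:14ii}: the inclusion $\dom\haut{f}\subset\cconv F$ is Lemma~\ref{l:24}\ref{l:24ii+}. For the reverse inclusion, let $x\in\cconv F$; by \ref{l:14i}, $\haut{f}(x)\leq-\bas{(-f)}(x)$, and since $x\in\cconv F=\cconv E\supset\dom\bas{(-f)}$ is not automatic — rather, I argue: $-\bas{(-f)}(x)<\pinf$ by \ref{l:14i-}, hence $\haut{f}(x)<\pinf$, i.e.\ $x\in\dom\haut{f}$. Therefore $\cconv F\subset\dom\haut{f}$, giving equality. The main obstacle is step \ref{l:14i}: transferring the pointwise inequality $-\phaut{f}\leq\pbas{(-f)}$ through the biconjugation operation correctly — one must be careful that biconjugation of $-\phaut{f}$ is not simply $-\haut{f}$, so the argument must go through the variational characterization of $\bas{(-f)}$ as the largest l.s.c.\ convex minorant of $\pbas{(-f)}$, combined with the explicit agreement $\haut{f}|_F=f|_F=-\bas{(-f)}|_F$ and a convex-combination sandwich to extend the inequality from $F$ to $\conv F$ and then to $\cconv F$.
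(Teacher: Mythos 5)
Your parts \ref{l:14i-} and \ref{l:14ii} are fine (and \ref{l:14ii} is exactly the paper's argument), but your treatment of the key step \ref{l:14i} has a genuine gap. The detailed route you give — biconjugating $-\phaut{f}\leq\pbas{(-f)}$ — only yields $-\bas{(-f)}\leq M$, where $M$ is the smallest upper semicontinuous concave majorant of $\phaut{f}$; since $M\geq\haut{f}$, this chain points the wrong way and gives no comparison between $\haut{f}$ and $-\bas{(-f)}$. Your fallback then asserts $\haut{f}|_F=f|_F=-\bas{(-f)}|_F$ by invoking Lemma~\ref{l:24}\ref{l:24iii} and Lemma~\ref{l:12}\ref{l:12iii}, but those items sit in the parts of Lemmas~\ref{l:12} and \ref{l:24} that assume the underlying function lies in $\Gamma_0(\XX)$ (here they would require $f\in\Gamma_0(\XX)$ and $-f\in\Gamma_0(\XX)$ respectively); in Lemma~\ref{l:14} the function $f$ is arbitrary, and these equalities are simply false in general (e.g.\ $\haut{f}$ can drop strictly below $f$ on $F$ for a nonconvex $f$). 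Finally, the ``convexity/concavity sandwich \ldots mimicking the proof of Lemma~\ref{l:24}\ref{l:24ii}'' is only gestured at, not carried out, so the extension from $F$ to $\cconv F$ is not established.

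The repair is short and is what the paper does. On $F$ you only need inequalities, which hold for arbitrary $f$ by Lemma~\ref{l:6}\ref{l:6i-}: $\haut{f}\leq\phaut{f}=-\pbas{(-f)}\leq-\bas{(-f)}$ on $F$. Then set $h=\haut{f}+\bas{(-f)}$; this is well defined (indeed $\haut{f}\geq 0$ and, by \ref{l:14i-}, $\bas{(-f)}>\minf$), lower semicontinuous, and convex, with $h|_F\leq 0$. Hence $F\subset\lev{\leq 0}h$, and since $\lev{\leq 0}h$ is closed and convex, $\cconv F\subset\lev{\leq 0}h$, which is exactly $\haut{f}\leq-\bas{(-f)}$ on $\cconv F$; the lower bound $0\leq\haut{f}$ is Lemma~\ref{l:24}\ref{l:24ii+}. (Alternatively, your sandwich idea can be made rigorous via convexity of $\haut{f}$, concavity plus upper semicontinuity of $-\bas{(-f)}$, and a limiting net — but as written it is not a proof.)
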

\begin{proof} 
\ref{l:14i-}: 
Since $\cam\pbas{(-f)}\neq\emp$, \eqref{e:bas}
and Lemma~\ref{l:6}\ref{l:6ii} yield $\minf\notin\bas{(-f)}(\XX)$
and therefore $-\bas{(-f)}<\pinf$. 

\ref{l:14i}: 
The first inequality follows from Lemma~\ref{l:24}\ref{l:24ii+}.
We derive from Definition~\ref{d:hautbas} and
Lemma~\ref{l:6}\ref{l:6i-} that
\begin{equation}
\label{e:b4}
(\forall x\in F)\quad\haut{f}(x)\leq\phaut{f}(x)
=-\big(-f(x)\big)=-\pbas{(-f)}(x)\leq-\bas{(-f)}(x). 
\end{equation}
Now set $h=\haut{f}+\bas{(-f)}$. Then \eqref{e:b4} implies that 
$h|_{F}\leq 0$. Since $F\subset\lev{\leq 0}h$ and $h$ is lower
semicontinuous and convex, note that
$\cconv F\subset\cconv\lev{\leq 0}h=\lev{\leq 0}h$.

\ref{l:14ii}: This follows from \ref{l:14i-}, \ref{l:14i}, and
Lemma~\ref{l:24}\ref{l:24ii+}.
\end{proof}

\begin{remark}
\label{r:0}
Let $f\in\Gamma_0(\XX)$ be such that 
$(f^*)^{-1}(\RMM)\neq\emp$. Then
$f=\max\{\ebase{f},\ehaute{f}\}$. Indeed, since 
Lemma~\ref{l:6}\ref{l:6iv} asserts that $f=f^{**}$, it
follows from Lemma~\ref{l:9}\ref{l:9i-},
Lemma~\ref{l:12}\ref{l:12ii}, \eqref{e:phaut},
Lemma~\ref{l:6}\ref{l:6i}, and \eqref{e:haut} that
\begin{align}
\label{e:r0}
(\forall x\in\XX)\quad f(x)
&=\sup_{x^*\in\XX^*}\big(\pair{x}{x^*}-f^*(x^*)\big)\nonumber\\
&=\max\bigg\{
\sup_{(f^*)^{-1}(\RM)}\big(\pair{x}{x^*}-f^*(x^*)\big),
\sup_{(f^*)^{-1}(\RPP)}\big(\pair{x}{x^*}-f^*(x^*)\big)\bigg\}
\nonumber\\
&=\max\bigg\{
\sup_{x^*\in\XX^*}\big(\pair{x}{x^*}-\ebas{f}(x^*)\big),
\sup_{x^*\in\XX^*}\big(\pair{x}{x^*}-\ephaut{f}(x^*)\big)\bigg\}
\nonumber\\
&=\max\Big\{\ebase{f}(x),\ehaute{f}(x)\Big\}.
\end{align}
\end{remark}

\begin{example}
\label{ex:normp}
Suppose that $(\XX,\|\cdot\|)$ is a nonzero real reflexive Banach
space with dual norm 
$\|\cdot\|_{\normetoile}$, let $\alpha\in\RPP$, let 
$p\in\left]1,\pinf\right[$, set $\petoile=p/(p-1)$, and set 
$f=\|\cdot\|^p/p+\alpha^{\petoile}/\petoile$.
Then $f^*=(\|\cdot\|_{\normetoile}^{\petoile}-
\alpha^{\petoile})/\petoile$, 
which yields
$(f^*)^{-1}(\RMM)\neq\emp$ and $(f^*)^{-1}(\RPP)\neq\emp$.
Therefore, since $\cconv (f^{*})^{-1}(\RPP)=\XX^*$,
Lemma~\ref{l:12}\ref{l:12ii} and Lemma~\ref{l:24}\ref{l:24ii} imply
that 
\begin{equation}
\label{e:r9}
\ebas{f}\colon x^*\mapsto\begin{cases}\pinf, 
&\text{if}\:\:\|x^*\|_{\normetoile}>\alpha;\\[2mm]
\dfrac{\|x^*\|_{\normetoile}^{\petoile}
-\alpha^{\petoile}}{\petoile},
&\text{if}\:\:\|x^*\|_{\normetoile}\leq\alpha
\end{cases}
\quad\text{and}\quad\ehaut{f}\colon x^*\mapsto 
\begin{cases}
\dfrac{\|x^*\|_{\normetoile}^{\petoile}-
\alpha^{\petoile}}{\petoile}, 
&\text{if}\:\:\|x^*\|_{\normetoile}>\alpha;\\[2mm]
0,&\text{if}\:\:\|x^*\|_{\normetoile}\leq\alpha.
\end{cases}
\end{equation}
It is noteworthy that we obtain by conjugation 
\begin{equation}
\label{e:r8}
\ebase{f}\colon x\mapsto 
\begin{cases}
\alpha\|x\|,&\;\;\text{if}\:\:\|x\|>\alpha^{\frac{1}{p-1}};\\[2mm]
\dfrac{\|x\|^p}{p}+\dfrac{\alpha^{\petoile}}
{\petoile},&\;\;
\text{if}\:\:\|x\|\leq\alpha^{\frac{1}{p-1}}
\end{cases}\quad\text{and}\quad\ehaute{f}\colon x\mapsto 
\begin{cases}
\dfrac{\|x\|^p}{p}+\dfrac{\alpha^{\petoile}}{\petoile},&
\text{if}\;\;\|x\|>\alpha^{\frac{1}{p-1}};\\[5pt]
\alpha\|x\|,&\text{if}\;\;\|x\|\leq\alpha^{\frac{1}{p-1}}.
\end{cases}
\end{equation}
We recognize, respectively, the $p$th order Huber and Berhu
functions used in \cite{Ejst20,Lium20} (see Figure~\ref{fig:1}).
\end{example}

\begin{figure}
\begin{center}
\includegraphics[scale=0.25]{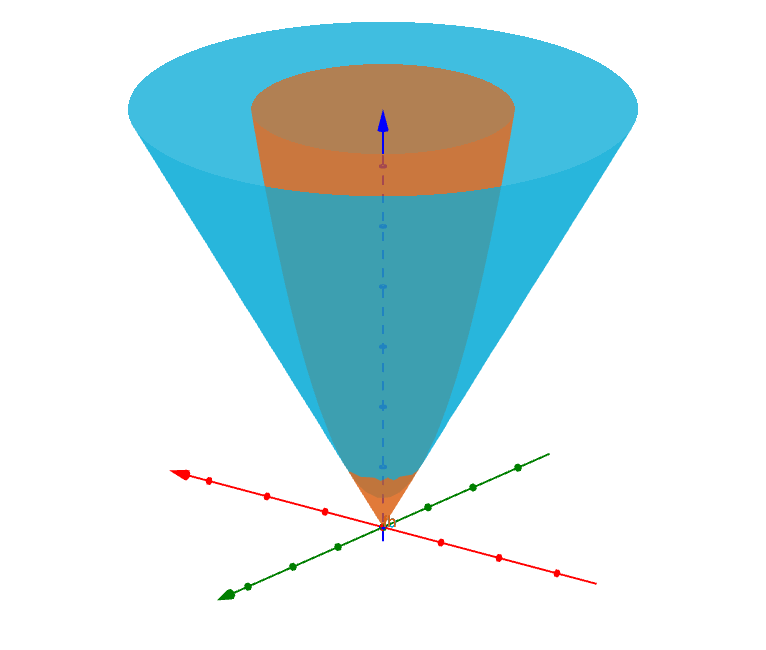}
\end{center}
\caption{Plots of $\ebase{f}$ (Huber, blue) and $\ehaute{f}$
(Berhu, orange) when $\XX=\RR^2$ and $f=(\|\cdot\|_2^2+1)/2$.
We verify that $f$ is the maximum of both functions, as observed 
in Remark~\ref{r:0}.}
\label{fig:1}
\end{figure}

\section{Preperspective functions} 
\label{sec:4}

Let us first record some direct consequences of 
Definition~\ref{d:persp}.

\begin{proposition}
\label{p:28}
Let $\varphi\colon\XX\to\RXX$, let $s\colon\YY\to\RXX$, and set
$S=s^{-1}(\RPP)$. Then the following hold:
\begin{enumerate}
\item 
\label{p:28i} 
$\dom(\varphi\ppersp s)=
\menge{(x,y)\in\XX\times S}{x\in s(y)\dom\varphi}$.
\item
\label{p:28ii} 
$\varphi\ppersp s$ is proper if and only if $\varphi$ is proper
and $S\neq\emp$.
\end{enumerate}
\end{proposition}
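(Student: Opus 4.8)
The plan is to unwind the definition \eqref{e:perspective2-} of $\varphi\ppersp s$ directly, keeping careful track of the conventions $s(y)\varphi(x/s(y))$ takes the value $\pinf$ exactly when $s(y)\notin\RPP$.

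\medskip

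\textbf{Proof of \ref{p:28i}.}
First I would observe that, by \eqref{e:perspective2-}, a point $(x,y)\in\XX\times\YY$ lies in $\dom(\varphi\ppersp s)$ only if $0<s(y)<\pinf$, i.e.\ only if $y\in S$; for $y\notin S$ the value is $\pinf$ by definition. So fix $y\in S$ and $x\in\XX$. Then $(\varphi\ppersp s)(x,y)=s(y)\varphi(x/s(y))$, and since $s(y)\in\RPP$ this is finite if and only if $\varphi(x/s(y))<\pinf$, i.e.\ if and only if $x/s(y)\in\dom\varphi$, i.e.\ if and only if $x\in s(y)\dom\varphi$. (Here one uses that $\varphi$ never takes the value $\minf$ is \emph{not} needed; even if $\varphi(x/s(y))=\minf$, the product is $\minf<\pinf$, so $(x,y)$ still belongs to the domain, which is consistent with the stated set since $x\in s(y)\dom\varphi$ in that case too.) Combining the two observations gives
\begin{equation}
\dom(\varphi\ppersp s)=\big\{(x,y)\in\XX\times S \;\big|\; x\in s(y)\dom\varphi\big\},
\end{equation}
which is \ref{p:28i}.

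\medskip

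\textbf{Proof of \ref{p:28ii}.}
Recall that $\varphi\ppersp s$ is proper means $\minf\notin(\varphi\ppersp s)(\XX\times\YY)\neq\{\pinf\}$. For the ``if'' direction, assume $\varphi$ is proper and $S\neq\emp$. Properness of $\varphi$ gives some $u\in\dom\varphi$, and $\varphi(u)>\minf$. Pick any $y\in S$ and set $x=s(y)u$; then $x\in s(y)\dom\varphi$, so by \ref{p:28i} $(x,y)\in\dom(\varphi\ppersp s)$ and $(\varphi\ppersp s)(x,y)=s(y)\varphi(u)\in\RR$ since $s(y)\in\RPP$ and $\varphi(u)\in\RR$. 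Hence $(\varphi\ppersp s)(\XX\times\YY)\neq\{\pinf\}$. It remains to check $\minf\notin(\varphi\ppersp s)(\XX\times\YY)$: any value $(\varphi\ppersp s)(x,y)$ is either $\pinf$, or of the form $s(y)\varphi(x/s(y))$ with $s(y)\in\RPP$; in the latter case it equals $\minf$ only if $\varphi(x/s(y))=\minf$, contradicting properness of $\varphi$. So $\varphi\ppersp s$ is proper. For the ``only if'' direction, assume $\varphi\ppersp s$ is proper. Then $(\varphi\ppersp s)(\XX\times\YY)\neq\{\pinf\}$ forces $\dom(\varphi\ppersp s)\neq\emp$, so by \ref{p:28i} there is $(x,y)$ with $y\in S$; hence $S\neq\emp$. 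Moreover this same $(x,y)$ satisfies $x/s(y)\in\dom\varphi$, so $\dom\varphi\neq\emp$, i.e.\ $\varphi\not\equiv\pinf$; and $\varphi>\minf$ everywhere, since if $\varphi(u)=\minf$ for some $u$, then taking any $y\in S$ and $x=s(y)u$ gives $(\varphi\ppersp s)(x,y)=\minf$, contradicting properness. Thus $\varphi$ is proper and $S\neq\emp$.

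\medskip

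There is no real obstacle here; the only point requiring care is the bookkeeping of the $\minf$/$\pinf$ conventions, in particular checking that the product $s(y)\varphi(x/s(y))$ cannot surreptitiously equal $\minf$ when $\varphi$ is proper, and that the domain description in \ref{p:28i} is unaffected by the possibility $\varphi(x/s(y))=\minf$. Both reduce to the elementary fact that $s(y)\in\RPP$ multiplies an element of $\RXX$ without changing its being finite, $\pinf$, or $\minf$.
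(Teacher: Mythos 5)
Your proof is correct and follows essentially the same route as the paper: part (i) is a direct unwinding of \eqref{e:perspective2-}, and part (ii) combines the observation that $\minf$ can only occur in the range of $\varphi\ppersp s$ through a value $\varphi(x/s(y))=\minf$ with the domain description from (i). Your bookkeeping of the $\minf$ case is, if anything, slightly more careful than the paper's one-line treatment, but there is no substantive difference.
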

\begin{proof}
\ref{p:28i}:
Clear from Definition~\ref{d:persp}.

\ref{p:28ii}: 
We derive from \eqref{e:perspective2-} that 
$\minf\in(\varphi\ppersp s)(\XX\times\YY)$ 
$\Leftrightarrow$ $\minf\in\varphi(\XX)$.
Suppose that $\varphi\ppersp s$ is proper and let 
$(x,y)\in\dom(\varphi\ppersp s)$. 
In view of \ref{p:28i}, $y\in S$ and
$x/s(y)\in\dom\varphi$. Now suppose that
$\varphi$ is proper and $S\neq\emp$, and let
$(x,y)\in\dom\varphi\times S$.
Then $(s(y)x,y)\in\dom(\varphi\ppersp s)$.
\end{proof}

Our first result provides conditions under which the preperspective
of a convex function is itself convex.

\begin{proposition}
\label{p:30}
Let $\varphi\colon\XX\to\RXX$ be convex, let
$s\colon\YY\to\RXX$, set $S=s^{-1}(\RPP)$, and suppose
that one of the following holds:
\begin{enumerate}
\item
\label{p:30iii}
$\varphi$ satisfies
\begin{equation}
\label{e:passeport}
(\forall\lambda\in\left]1,\pinf\right[)(\forall x\in\dom\varphi)
\quad\varphi(\lambda x)\leq\lambda\varphi(x),
\end{equation}
$s$ is proper and convex, and $S$ is convex.
\item
\label{p:30ii}
$\varphi(0)\leq 0$ and $-s$ is proper and convex.
\item
\label{p:30i}
$s$ is an affine function.
\end{enumerate}
Then $\varphi\ppersp s$ is convex.
\end{proposition}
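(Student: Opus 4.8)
The strategy is to verify convexity of $\varphi\ppersp s$ directly on its epigraph, reducing in each case to the joint convexity of the classical perspective map $(x,t)\mapsto t\varphi(x/t)$ on $\XX\times\RPP$, which is the content (essentially) of the linear-scaling theory recalled in \eqref{e:0}. The common mechanism is: $\varphi\ppersp s$ is the composition of the classical perspective $p_\varphi\colon(x,t)\mapsto t\varphi(x/t)$ (extended by $\pinf$ off $\XX\times\RPP$) with the map $(x,y)\mapsto(x,s(y))$, and one wants this composition to be convex even though $p_\varphi$ is not monotone in its second argument. So the real work is to control the $t$-slot.

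For case \ref{p:30iii}, I would argue as follows. Since $s$ is proper and convex with $S=s^{-1}(\RPP)$ convex, it suffices to fix $(x_0,y_0),(x_1,y_1)\in\dom(\varphi\ppersp s)$ and $\lambda\in\zeroun$; then $y_\lambda:=(1-\lambda)y_0+\lambda y_1\in S$ and, writing $t_i=s(y_i)$, $t_\lambda=s(y_\lambda)$, convexity of $s$ gives $t_\lambda\leq(1-\lambda)t_0+\lambda t_1=:\tau$. By joint convexity of $p_\varphi$ on $\XX\times\RPP$ one bounds $(1-\lambda)p_\varphi(x_0,t_0)+\lambda p_\varphi(x_1,t_1)$ below by $p_\varphi(x_\lambda,\tau)$ where $x_\lambda=(1-\lambda)x_0+\lambda x_1$; it then remains to show $p_\varphi(x_\lambda,\tau)\geq p_\varphi(x_\lambda,t_\lambda)=(\varphi\ppersp s)(x_\lambda,y_\lambda)$, i.e. that $t\mapsto t\varphi(x_\lambda/t)$ is \emph{nondecreasing} on $\RPP$ — and this is exactly where hypothesis \eqref{e:passeport} enters. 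Indeed, for $0<t\leq\tau$, writing $\mu=\tau/t>1$, \eqref{e:passeport} applied at the point $x_\lambda/\tau\in\dom\varphi$ gives $\varphi(x_\lambda/t)=\varphi(\mu\cdot x_\lambda/\tau)\leq\mu\,\varphi(x_\lambda/\tau)$, hence $t\varphi(x_\lambda/t)\leq\tau\varphi(x_\lambda/\tau)$, which is the desired monotonicity. (One should note \eqref{e:passeport} is exactly condition \ref{l:12eii} of Lemma~\ref{l:12e}, so this is the natural hypothesis; a mild point is checking $x_\lambda/\tau\in\dom\varphi$, which follows since $x_i/t_i\in\dom\varphi$ and $\dom\varphi$ is convex together with the identity $x_\lambda/\tau$ being a convex combination of $x_0/t_0$ and $x_1/t_1$ with weights $(1-\lambda)t_0/\tau$ and $\lambda t_1/\tau$.)

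For case \ref{p:30ii}, the sign of $s$ is reversed, so $-s$ convex means $s$ is concave and the inequality $t_\lambda\geq(1-\lambda)t_0+\lambda t_1$ now goes the other way; this time I need $t\mapsto t\varphi(x_\lambda/t)$ to be \emph{nonincreasing} on $\RPP$, which is guaranteed by $\varphi(0)\leq 0$: for $t\leq t'$ one writes $x_\lambda/t'$ as a convex combination of $x_\lambda/t$ and $0$, applies convexity of $\varphi$ (using $\varphi(0)\leq 0$), and multiplies through. Finally, case \ref{p:30i}: if $s$ is affine then $(x,y)\mapsto(x,s(y))$ is an affine map, and $\varphi\ppersp s$ is the composition of the jointly convex function $p_\varphi$ (extended by $+\infty$ off $\XX\times\RPP$, so its domain constraint $s(y)>0$ is automatically convex) with this affine map, hence convex with no monotonicity needed.

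The main obstacle, and the one point requiring care, is the monotonicity step in \ref{p:30iii}: establishing that \eqref{e:passeport} forces $t\mapsto t\varphi(x/t)$ to be nondecreasing on $\RPP$, with attention to domain issues (ensuring the relevant points $x/t$ lie in $\dom\varphi$ so the estimates make sense) and to the degenerate possibility $\tau=t_\lambda$. Everything else is a routine unpacking of the classical joint convexity of the perspective together with bookkeeping of which direction the inequality $s\bigl((1-\lambda)y_0+\lambda y_1\bigr)\lessgtr(1-\lambda)s(y_0)+\lambda s(y_1)$ runs; I would present case \ref{p:30iii} in full and then indicate the parallel (sign-flipped) argument for \ref{p:30ii} and the short composition argument for \ref{p:30i}.
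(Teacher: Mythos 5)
Your proposal is correct and is essentially the paper's own argument: the paper's single displayed chain applies \eqref{e:passeport} (resp.\ $\varphi(0)\leq 0$) with the factor $\beta=\big(\alpha s(y_1)+(1-\alpha)s(y_2)\big)/s(y)$ at exactly your intermediate point $x_\lambda/\tau$ and then uses convexity of $\varphi$, which is precisely your ``monotonicity in the scaling slot'' step followed by the joint-convexity step, with the same convex-combination check that $x_\lambda/\tau\in\dom\varphi$. The only cosmetic caveat is that the joint convexity of $(x,t)\mapsto t\varphi(x/t)$ on $\XX\times\RPP$ for a merely convex $\RXX$-valued $\varphi$ should be verified by the same one-line computation rather than quoted from \eqref{e:0}, which concerns $\varphi\in\Gamma_0(\XX)$.
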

\begin{proof}
Let $\alpha\in\zeroun$, and suppose that 
$(x_1,y_1)\in\dom(\varphi\ppersp s)$ and
$(x_2,y_2)\in\dom(\varphi\ppersp s)$. Set
\begin{equation}
y=\alpha y_1+(1-\alpha)y_2.
\end{equation}
Observe that, since $S$ is convex, $y\in S$. Further, set
\begin{equation}
\label{e:after8}
\beta_1=\dfrac{\alpha s(y_1)}{s(y)},\;\;
\beta_2=\dfrac{(1-\alpha) s(y_2)}{s(y)},\quad\text{and}\quad
\beta=\beta_1+\beta_2,
\end{equation}
and note that $\beta_1\in\RPP$ and $\beta_2\in\RPP$. 

\ref{p:30iii}: Observe that the convexity of $s$ yields
$\beta\in\left[1,\pinf\right[$. In view of
\eqref{e:after8}, \eqref{e:passeport}, and the convexity of
$\varphi$, we have
\begin{align}
(\varphi\ppersp s)\big(\alpha(x_1,y_1)+
(1-\alpha)(x_2,y_2)\big)
&=s(y)\varphi\bigg(\dfrac{\alpha x_1+(1-\alpha)x_2}
{s(y)}\bigg)\nonumber\\
&=s(y)\varphi\bigg(
\dfrac{\beta_1x_1}{s(y_1)}+\dfrac{\beta_2x_2}{s(y_2)}\bigg)
\nonumber\\
&=s(y)\varphi\bigg(\beta\bigg(
\dfrac{\beta_1x_1}{\beta s(y_1)}+\dfrac{\beta_2x_2}{\beta s(y_2)}
\bigg)\bigg)
\nonumber\\
&\leq s(y)\beta\varphi\bigg(
\dfrac{\beta_1}{\beta}\dfrac{x_1}{s(y_1)}+\dfrac{\beta_2}{\beta}
\dfrac{x_2}{s(y_2)}\bigg)\nonumber\\
&\leq s(y)\beta_1\varphi\bigg(
\dfrac{x_1}{s(y_1)}\bigg)+s(y)\beta_2\varphi\bigg(
\dfrac{x_2}{s(y_2)}\bigg)\nonumber\\
&=\alpha s(y_1)\varphi\bigg(
\dfrac{x_1}{s(y_1)}\bigg)+(1-\alpha)s(y_2)\varphi\bigg(
\dfrac{x_2}{s(y_2)}\bigg)\nonumber\\
&=\alpha(\varphi\ppersp s)(x_1,y_1)+(1-\alpha)
(\varphi\ppersp s)(x_2,y_2).
\end{align}

\ref{p:30ii}--\ref{p:30i}:
By convexity, $s(y)\geq\alpha s(y_1)+(1-\alpha)s(y_2)>0$
and, therefore, \eqref{e:after8} yields $\beta\in\rzeroun$. We have
\begin{align}
\label{e:soyunid}
(\varphi\ppersp s)\big(\alpha(x_1,y_1)+
(1-\alpha)(x_2,y_2)\big)&=s(y)\varphi\bigg(\dfrac{\alpha 
x_1+(1-\alpha)x_2}
{s(y)}\bigg)\nonumber\\
&=s(y)\varphi\bigg(
\beta_1\dfrac{x_1}{s(y_1)}+\beta_2\dfrac{x_2}{s(y_2)}
+(1-\beta)0\bigg).
\end{align}
In case \ref{p:30i} we have $\beta=1$ and hence, by convexity of
$\varphi$,
\begin{align}
\label{e:lyon}
(\varphi\ppersp s)\big(\alpha(x_1,y_1)+
(1-\alpha)(x_2,y_2)\big)
&\leq\alpha s(y_1)\varphi\bigg(\dfrac{x_1}{s(y_1)}\bigg)
+(1-\alpha)s(y_2)\varphi\bigg(\dfrac{x_2}{s(y_2)}\bigg)
\nonumber\\
&=\alpha(\varphi\ppersp s)(x_1,y_1)+(1-\alpha)
(\varphi\ppersp s)(x_2,y_2).
\end{align}
We now turn to \ref{p:30ii}. If $\beta=1$, then we obtain 
\eqref{e:lyon} using 
\eqref{e:soyunid}.
On the other hand, if $\beta\in\zeroun$, then since
$\varphi(0)\leq 0$, we have
$(1-\beta)s(y)\varphi(0)\leq 0$. Hence, it follows from
\eqref{e:soyunid} and convexity of 
$\varphi$ that
\begin{align}
(\varphi\ppersp s)\big(\alpha(x_1,y_1)+
(1-\alpha)(x_2,y_2)\big)
&\leq\alpha
s(y_1)\varphi\bigg(\dfrac{x_1}{s(y_1)}\bigg)
+(1-\alpha)s(y_2)\varphi\bigg(\dfrac{x_2}{s(y_2)}\bigg)\nonumber\\
&\quad\;+(1-\beta)s(y)\varphi(0)\nonumber\\
&\leq\alpha s(y_1)\varphi\bigg(\dfrac{x_1}{s(y_1)}\bigg)
+(1-\alpha)s(y_2)\varphi\bigg(\dfrac{x_2}{s(y_2)}\bigg)
\nonumber\\
&\leq\alpha(\varphi\ppersp s)(x_1,y_1)+(1-\alpha)
(\varphi\ppersp s)(x_2,y_2),
\end{align}
which concludes the proof.
\end{proof} 

Next, we determine the conjugate of the preperspective, using the
$\blacktriangledown$ and $\blacktriangleup$ envelopes of
Definition~\ref{d:hautbas}. In view of \eqref{e:perspective2-}, if
$s^{-1}(\RPP)=\emp$, then $(\varphi\ppersp s)^*\equiv\minf$ and
$\varphi\persp s\equiv\pinf$. We therefore rule out this
trivial case henceforth.

\begin{proposition}
\label{p:302}
Let $\varphi\colon\XX\to\RXX$, let $s\colon\YY\to\RXX$, 
let $x^*\in\XX^*$ and $y^*\in\YY^*$, and suppose that 
$S=s^{-1}(\RPP)\neq\emp$. Then the following hold:
\begin{enumerate}
\item
\label{p:302i}
$(\varphi\ppersp s)^*(x^*,y^*)
=\sup_{y\in S}(\pair{y}{y^*}+s(y)\varphi^*(x^*))$.
\item
\label{p:302ii-}
Suppose that $\varphi^*(x^*)=\pm\infty$. Then
$(\varphi\ppersp s)^*(x^*,y^*)=\pm\infty$.
\item
\label{p:302iii}
Suppose that $\minf<\varphi^*(x^*)<0$. Then
$(\varphi\ppersp s)^*(x^*,y^*)=
(\haute{s}\ppersp(-\varphi^*))(y^*,x^*)$.
\item 
\label{p:302iiv}
Suppose that $\varphi^*(x^*)=0$. Then
$(\varphi\ppersp s)^*(x^*,y^*)=\sigma_{\cconv S}(y^*)$.
\item
\label{p:302ii}
Suppose that $0<\varphi^*(x^*)<\pinf$. Then
$(\varphi\ppersp s)^*(x^*,y^*)=
(\base{(-s)}\ppersp\varphi^*)(y^*,x^*)$.
\end{enumerate}
\end{proposition}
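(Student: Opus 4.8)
The plan is to start from the definition of the conjugate and evaluate the inner supremum over $\XX$ first, exploiting the change of variables $x\mapsto s(y)x$ that is natural for the preperspective. Concretely, for fixed $(x^*,y^*)$,
\[
(\varphi\ppersp s)^*(x^*,y^*)
=\sup_{(x,y)\in\XX\times\YY}\big(\pair{x}{x^*}+\pair{y}{y^*}-(\varphi\ppersp s)(x,y)\big).
\]
By \eqref{e:perspective2-}, only $y\in S$ contributes, and for such $y$ the substitution $x=s(y)u$ gives
\[
\sup_{x\in\XX}\big(\pair{x}{x^*}-s(y)\varphi(x/s(y))\big)
=s(y)\sup_{u\in\XX}\big(\pair{u}{x^*}-\varphi(u)\big)=s(y)\varphi^*(x^*),
\]
using $s(y)>0$. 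Taking the supremum over $y\in S$ then yields part \ref{p:302i}. This part is routine; the remaining parts are obtained by specializing the scalar $c:=\varphi^*(x^*)$ in the expression $\sup_{y\in S}(\pair{y}{y^*}+c\,s(y))$.

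For \ref{p:302ii-}: if $c=+\infty$, pick any $y_0\in S$; then $\pair{y_0}{y^*}+c\,s(y_0)=+\infty$, so the supremum is $+\infty$. If $c=-\infty$, then $\pair{y}{y^*}+c\,s(y)=-\infty$ for every $y\in S$ (as $s(y)>0$), so the supremum over the nonempty set $S$ is $-\infty$. For \ref{p:302iiv}: if $c=0$ the expression reduces to $\sup_{y\in S}\pair{y}{y^*}=\sigma_S(y^*)$, and Lemma~\ref{l:99} gives $\sigma_S=\sigma_{\cconv S}$, which is $\sigma_{\cconv S}(y^*)$.

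The two substantive cases are \ref{p:302iii} and \ref{p:302ii}, and here the idea is to recognize $\sup_{y\in S}(\pair{y}{y^*}+c\,s(y))$ as a preperspective evaluated at $(y^*,x^*)$, with the roles of the scaling function and base function swapped. Consider \ref{p:302ii}, where $0<c<\pinf$: write $\pair{y}{y^*}+c\,s(y)=c\big(\pair{y}{y^*/c}+s(y)\big)$... more carefully, one wants $\sup_{y\in S}(\pair{y}{y^*}+c\,s(y))$ to equal $(\base{(-s)}\ppersp\varphi^*)(y^*,x^*)$; since $\varphi^*(x^*)=c$, the outer factor is $c=\varphi^*(x^*)$, so $(\base{(-s)}\ppersp\varphi^*)(y^*,x^*)=\varphi^*(x^*)\,\base{(-s)}(y^*/\varphi^*(x^*))$ by \eqref{e:perspective2-} (noting $\varphi^*(x^*)\in\RPP$). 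Thus the claim becomes
\[
\sup_{y\in S}\Big(\pair{y}{y^*/c}+s(y)\Big)=\base{(-s)}\big(y^*/c\big),
\]
i.e. one must show that for any $z^*\in\YY^*$, $\sup_{y\in s^{-1}(\RPP)}(\pair{y}{z^*}+s(y))=\base{(-s)}(z^*)$. Now $\sup_{y}(\pair{y}{z^*}+s(y))=\sup_y(\pair{y}{z^*}-(-s)(y))$ restricted to $(-s)^{-1}(\RMM)$, which is exactly $(-s)^{\vee*}(z^*)=\pbas{(-s)}^{*}(z^*)$; and by Lemma~\ref{l:6}\ref{l:6i} together with \eqref{e:bas}, $\pbas{(-s)}^{*}=(-s)^{\vee*}=(-s)^{\vee***}=(\base{(-s)})^{*}$, wait — we need $(\pbas{(-s)})^* = (\bas{(-s)})^{*}$ which is Lemma~\ref{l:6}\ref{l:6i} applied to $\pbas{(-s)}$, and then $(\bas{(-s)})^{*}=\base{(-s)}$ by \eqref{e:ebase}-type notation; since $\base{(-s)}$ denotes $(\bas{(-s)})^{*}$ in the paper's superscript conventions, this is immediate. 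A symmetric computation handles \ref{p:302iii}: when $\minf<c<0$, write $c=-|c|$ and $\pair{y}{y^*}+c\,s(y)=|c|\big(\pair{y}{y^*/|c|}-s(y)\big)=|c|\big(\pair{y}{y^*/|c|}+(-s)(y)\big)$, so the outer factor is $|c|=-\varphi^*(x^*)$, matching the $\haute{s}\ppersp(-\varphi^*)$ form, and one reduces to showing $\sup_{y\in S}(\pair{y}{z^*}-s(y))=\haute{s}(z^*)=(\haut{s})^*(z^*)$; but $\sup_{y\in s^{-1}(\RPP)}(\pair{y}{z^*}-s(y))=(\phaut{s})^*(z^*)=(\haut{s})^*(z^*)$, again by \eqref{e:phaut} and Lemma~\ref{l:6}\ref{l:6i}.

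The main obstacle is bookkeeping rather than conceptual: one must be careful about which envelope ($\blacktriangledown$ versus $\blacktriangle$) and which conjugation appears, since $-\varphi^*(x^*)$ is positive in \ref{p:302iii} but $\varphi^*(x^*)$ is positive in \ref{p:302ii}, and the superscript-decoration notation ($\base{\cdot}$, $\haute{\cdot}$, etc.) encodes an extra conjugation that must line up exactly with the outer $\sup$. A secondary point worth checking is that in \ref{p:302iii} and \ref{p:302ii} one implicitly needs the relevant preimage set ($(-s)^{-1}(\RMM)$ or $s^{-1}(\RPP)$) to be nonempty for the envelope identity to be the intended one — but $S=s^{-1}(\RPP)\neq\emp$ is assumed throughout, which covers the $\blacktriangle$ case directly, and for the $\blacktriangledown$ case the supremum formula $(\pbas{(-s)})^*$ is valid regardless (it just equals $\minf$ if the set is empty, consistent with both sides). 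Otherwise the argument is a direct specialization of \ref{p:302i}.
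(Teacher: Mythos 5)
Your proposal is correct and takes essentially the same route as the paper's proof: \ref{p:302i} by exchanging the suprema via the substitution $x=s(y)u$, \ref{p:302iiv} via Lemma~\ref{l:99}, and \ref{p:302iii} and \ref{p:302ii} by factoring out the positive scalar $-\varphi^*(x^*)$ (resp.\ $\varphi^*(x^*)$) and identifying the inner supremum with $(\phaut{s})^{*}=\haute{s}$ (resp.\ $(\pbas{(-s)})^{*}=\base{(-s)}$) through $f^*=f^{***}$ (Lemma~\ref{l:6}\ref{l:6i}). The only blemish is your citation of a nonexistent equation label where the notational fact $\base{(-s)}=(\bas{(-s)})^{*}$ is needed, which, as you yourself note, is simply the definition of the superscript convention.
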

\begin{proof} 
\ref{p:302i}:
It follows from Definition~\ref{d:persp} and
Proposition~\ref{p:28}\ref{p:28i} that 
\begin{align}
\label{e:fenchel1}
(\varphi\ppersp s)^*(x^*,y^*)
&=\sup_{\substack{x\in\XX\\ y\in\YY}}
\big(\pair{x}{x^*}+\pair{y}{y^*}-(\varphi\ppersp s)(x,y)\big)
\nonumber\\
&=\sup_{\substack{x\in\XX\\ y\in S}}
\bigg(\pair{x}{x^*}+\pair{y}{y^*}-s(y)
\varphi\bigg(\frac{x}{s(y)}\bigg)\bigg)
\nonumber\\
&=\sup_{y\in S}\bigg(\pair{y}{y^*}+s(y)\bigg(\sup_{x\in\XX}
\pair{\frac{x}{s(y)}}{x^*}-
\varphi\bigg(\frac{x}{s(y)}\bigg)\bigg)\bigg)\nonumber\\
&=\sup_{y\in S}\big(\pair{y}{y^*}+s(y)\varphi^*(x^*)\big).
\end{align}

\ref{p:302ii-}: This follows from \ref{p:302i}.

\ref{p:302iii}: 
It follows from \ref{p:302i}, \eqref{e:phaut}, \eqref{e:haut},
and Lemma~\ref{l:6}\ref{l:6i} that
\begin{align}
(\varphi\ppersp s)^*(x^*,y^*)&=-\varphi^*(x^*)
\sup_{y\in S}\bigg(\pair{y}{\frac{y^*}{-\varphi^*(x^*)}}-s(y)
\bigg)\nonumber\\
&=-\varphi^*(x^*)\sup_{y\in\YY}
\bigg(\pair{y}{\frac{y^*}{-\varphi^*(x^*)}}
-\phaut{s}(y)\bigg)\nonumber\\
&=-\varphi^*(x^*)s^{{\tiny\mbox{$\wedge$}}*}\bigg(\frac{y^*}
{-\varphi^*(x^*)}\bigg)\nonumber\\
&=\big(\haute{s}\ppersp(-\varphi^*)\big)(y^*,x^*).
\end{align}

\ref{p:302iiv}: 
We derive from \ref{p:302i} and Lemma~\ref{l:99} that
$(\varphi\ppersp s)^*(x^*,y^*)=\sigma_S(y^*)=
\sigma_{\cconv S}(y^*)$.

\ref{p:302ii}: 
It follows from \ref{p:302i}, \eqref{e:pbas}, \eqref{e:bas},
and Lemma~\ref{l:6}\ref{l:6i} that
\begin{align}
(\varphi\ppersp s)^*(x^*,y^*)
&=\varphi^*(x^*)\sup_{y\in S}
\bigg(\pair{y}{\frac{y^*}{\varphi^*(x^*)}}+s(y)\bigg)\nonumber\\
&=\varphi^*(x^*)\sup_{y\in\YY}\bigg(\pair{y}{\frac{y^*}
{\varphi^*(x^*)}}-\pbas{(-s)}(y)\bigg)\nonumber\\
&=\varphi^*(x^*)(-s)^{{\tiny\mbox{$\vee$}}*}
\bigg(\frac{y^*}{\varphi^*(x^*)}\bigg)\nonumber\\
&=\big(\base{(-s)}\ppersp\varphi^*\big)(y^*,x^*),
\end{align}
as claimed.
\end{proof}

As an illustration, we consider the case of affine scaling.

\begin{example}
\label{ex:c7} 
Let $\varphi\in\Gamma_0(\XX)$, let 
$w^*\in\YY^*\smallsetminus\{0\}$, let 
$\overline{y}\in\YY$, set $s=w^*-\pair{\overline{y}}{w^*}$, 
set $S=\menge{y\in\YY}{\pair{y-\overline{y}}{w^*}>0}$, and set
$K=\menge{y\in\YY}{\pair{y}{w^*}\geq 0}$.
Let $x^*\in\XX^*$ and $y^*\in\YY^*$. If $\varphi^*(x^*)=\pinf$,
Proposition~\ref{p:302}\ref{p:302ii-} yields 
$(\varphi\ppersp s)^*(x^*,y^*)=\pinf$. Otherwise,
$\varphi^*(x^*)\in\RR$ and, since $S\neq\emp$,
it follows from Proposition~\ref{p:302}\ref{p:302i} that
\begin{align}
\label{e:5t}
(\varphi\ppersp s)^*(x^*,y^*) 
&=\sup_{y\in S}
\big(\pair{y}{y^*}+\varphi^*(x^*)\pair{y-\overline{y}}{w^*}\big)
\nonumber\\
&=\pair{\overline{y}}{y^*}+\sup_{y\in S}
\pair{y-\overline{y}}{y^*+\varphi^*(x^*)w^*}
\nonumber\\
&=\pair{\overline{y}}{y^*}+\sup_{y\in K}
\pair{y}{y^*+\varphi^*(x^*)w^*}
\nonumber\\
&=
\begin{cases}
\pair{\overline{y}}{y^*},&\text{if}\:\:\big(\exi
\beta\in\left]\minf,-\varphi^*(x^*)\right]\big)\;\;
y^*=\beta w^*;\\
\pinf,&\text{otherwise}.
\end{cases}
\end{align}
In particular, suppose that $\YY=\RR$, $w^*=1$, and 
$\overline{y}=0$, i.e.,
$s\colon y\mapsto y$. Then $\varphi\ppersp s$ is the standard
preperspective of \eqref{e:perspective2-} and \eqref{e:5t} yields
\begin{equation}
(\varphi\ppersp s)^*=\iota_{C},\quad\text{where}\quad
C=\menge{(x^*,y^*)\in\XX^*\times\RR}{\varphi^*(x^*)+y^*\leq 0},
\end{equation}
which recovers the expression given in \cite{Rock66}.
\end{example}

Next, we derive a variant of Proposition~\ref{p:302} that will be
more readily applicable.

\begin{theorem}
\label{t:7}
Let $\varphi\colon\XX\to\RX$ be proper, let $s\colon\YY\to\RXX$
be such that $S=s^{-1}(\RPP)\neq\emp$, let $x^*\in\XX^*$, and
let $y^*\in\YY^*$. Then the following hold: 
\begin{enumerate}
\item 
\label{t:7i} 
Suppose that $\varphi^*(\XX^*)\subset\RM\cup\{\pinf\}$ and 
$(\varphi^*)^{-1}(\RMM)\neq\emp$. Then 
\begin{equation} 
\label{e:37conj}
\big(\varphi\ppersp{s}\big)^{*}(x^*,y^*)=
\begin{cases}
-\varphi^*(x^*)\haute{s}\Bigg(\dfrac{y^*}{-\varphi^*(x^*)}
\Bigg),&\text{if}\:\:\minf<\varphi^*(x^*)<0;\\
\sigma_{\cconv S}(y^*),&\text{if}\:\:\varphi^*(x^*)=0;\\[3mm]
\pinf,&\text{if}\:\:\varphi^*(x^*)=\pinf.
\end{cases}
\end{equation} 
\item 
\label{t:7ii} 
Suppose that $\varphi^*(\XX^*)\subset\{0,\pinf\}$. Then 
\begin{equation} 
\label{e:38conj}
\big(\varphi
\ppersp{s}\big)^{*}(x^*,y^*)=
\iota_{(\varphi^*)^{-1}(\{0\})}(x^*)+\sigma_{\cconv S}(y^*).
\end{equation} 
\item 
\label{t:7iii} 
Suppose that $\varphi^*(\XX^*)\subset\RPX$ and 
$(\varphi^*)^{-1}(\RPP)\neq\emp$.
Then
\begin{equation}
\label{e:r7}
(\varphi\ppersp s)^*(x^*,y^*)=
\begin{cases}
\varphi^*(x^*)\base{(-s)}\Bigg(\dfrac{y^*}{\varphi^*(x^*)}\Bigg),
&\text{if}\:\:0<\varphi^*(x^*)<\pinf;\\
\sigma_{\cconv S}(y^*),&\text{if}\:\:\varphi^*(x^*)=0;\\[3mm]
\pinf,&\text{if}\:\:\varphi^*(x^*)=\pinf.
\end{cases}
\end{equation}
\item 
\label{t:7iv}
Suppose that $(\varphi^*)^{-1}(\RMM)\neq\emp$
and $(\varphi^*)^{-1}(\RPP)\neq\emp$.
Then the following hold:
\begin{enumerate}
\item
\label{t:7iva}
$(\varphi\ppersp s)^*(x^*,y^*)=
\begin{cases}
-\varphi^*(x^*)\haute{s}\Bigg(\dfrac{y^*}{-\varphi^*(x^*)}
\Bigg),&\text{if}\:\:\minf<\varphi^*(x^*)<0;\\
\sigma_{\cconv 
S}(y^*),&\text{if}\:\:\varphi^*(x^*)=0;\\
\varphi^*(x^*)\base{(-s)}\Bigg(\dfrac{y^*}{\varphi^*(x^*)}\Bigg),
&\text{if}\:\:0<\varphi^*(x^*)<\pinf;\\
\pinf,&\text{if}\:\:\varphi^*(x^*)=\pinf.
\end{cases}$
\item
\label{t:7ivb}
$(\varphi\ppersp s)^*(x^*,y^*)
=\min\big\{\big(\ebase{\varphi}\ppersp s\big)^*(x^*,y^*),
\big(\ehaute{\varphi}\ppersp s\big)^*(x^*,y^*)\big\}$.
\end{enumerate}
\end{enumerate}
\end{theorem}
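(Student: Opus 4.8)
The plan is to derive Theorem~\ref{t:7} from Proposition~\ref{p:302} by a careful case analysis on the sign of $\varphi^*(x^*)$, using the fact that $\varphi$ is proper (so $\varphi^*>\minf$, hence the value $\varphi^*(x^*)=\minf$ never arises). The essential point is that under each hypothesis the relevant one of the envelope constructions $\haut{s}$, $\bas{(-s)}$ simplifies because the prescribed sign condition on $\varphi^*$ restricts which branches of Proposition~\ref{p:302} are active, and the associated preperspectives $\haute{s}\ppersp(-\varphi^*)$ and $\base{(-s)}\ppersp\varphi^*$ unwind explicitly via \eqref{e:perspective2-}.

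First I would handle part~\ref{t:7i}. The hypothesis $\varphi^*(\XX^*)\subset\RM\cup\{\pinf\}$ rules out the branches \ref{p:302ii-} with $+\infty$ and \ref{p:302ii} of Proposition~\ref{p:302}, leaving only the three cases $\varphi^*(x^*)\in\RMM$, $\varphi^*(x^*)=0$, and $\varphi^*(x^*)=\pinf$. For $\minf<\varphi^*(x^*)<0$, Proposition~\ref{p:302}\ref{p:302iii} gives $(\varphi\ppersp s)^*(x^*,y^*)=(\haute{s}\ppersp(-\varphi^*))(y^*,x^*)$; since $-\varphi^*(x^*)\in\RPP$, the definition \eqref{e:perspective2-} of the preperspective evaluates this to $-\varphi^*(x^*)\,\haute{s}(y^*/(-\varphi^*(x^*)))$, which is the first branch of \eqref{e:37conj}. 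The case $\varphi^*(x^*)=0$ is Proposition~\ref{p:302}\ref{p:302iiv} verbatim, and $\varphi^*(x^*)=\pinf$ is Proposition~\ref{p:302}\ref{p:302ii-}. Part~\ref{t:7ii} is the degenerate sub-case: when $\varphi^*(\XX^*)\subset\{0,\pinf\}$, only the $\varphi^*(x^*)=0$ and $\varphi^*(x^*)=\pinf$ branches survive, and bundling them gives the indicator-plus-support-function form \eqref{e:38conj}. Part~\ref{t:7iii} is symmetric to~\ref{t:7i}: the hypothesis $\varphi^*(\XX^*)\subset\RPX$ kills the $\RMM$ and $\minf$ branches, and Proposition~\ref{p:302}\ref{p:302ii} together with an unwinding of $\base{(-s)}\ppersp\varphi^*$ via \eqref{e:perspective2-} (now using $\varphi^*(x^*)\in\RPP$) yields \eqref{e:r7}.

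For part~\ref{t:7iv}, item~\ref{t:7iva} is just the assembly of all four branches of Proposition~\ref{p:302}\ref{p:302ii-}--\ref{p:302ii} with the same unwindings as above, now with no sign restriction beyond properness of $\varphi$ (which excludes $\varphi^*(x^*)=\minf$); the two nonemptiness hypotheses on $(\varphi^*)^{-1}(\RMM)$ and $(\varphi^*)^{-1}(\RPP)$ are there to guarantee that the envelopes $\haut{s}$ and $\bas{(-s)}$ appearing on the right are genuinely the objects of Definition~\ref{d:hautbas} (i.e.\ the relevant sign-restricted level sets are nonempty when one plugs in, via Lemma~\ref{l:12} and Lemma~\ref{l:24}). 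Item~\ref{t:7ivb} is where I expect the real work: I would invoke Remark~\ref{r:0}, which under exactly these hypotheses gives $\varphi=\max\{\ebase{\varphi},\ehaute{\varphi}\}$, together with its dual content, namely that $\varphi^*$ restricted to the sets $(\varphi^*)^{-1}(\RM)$ and $(\varphi^*)^{-1}(\RPP)$ coincides respectively with $\ebas{\varphi}$ and $\ephaut{\varphi}$ (cf.\ Lemma~\ref{l:12}\ref{l:12iii} and Lemma~\ref{l:24}\ref{l:24iii} applied to $f=\varphi^*$). Then I would apply part~\ref{t:7i} to $\ebase{\varphi}$ — whose conjugate $\ebas{\varphi}$ takes values in $\RM\cup\{\pinf\}$ — and part~\ref{t:7iii} to $\ehaute{\varphi}$ — whose conjugate $\ephaut{\varphi}$ takes values in $\RPX$ — and check that the resulting two piecewise expressions, when we take their pointwise minimum, reproduce the four-branch formula of~\ref{t:7iva}: on $\varphi^*(x^*)\in\RMM$ the $\ehaute{\varphi}$ term contributes $\pinf$ so the min picks out the $\ebase{\varphi}$ value; symmetrically on $\RPP$; and on $\varphi^*(x^*)=0$ both terms give $\sigma_{\cconv S}(y^*)$. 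The main obstacle is the bookkeeping in this last step: one must verify that $\ebase{\varphi}$ and $\ehaute{\varphi}$ actually satisfy the hypotheses of~\ref{t:7i} and~\ref{t:7iii} (in particular that $(\ebas{\varphi})^{-1}(\RMM)$ and $(\ephaut{\varphi})^{-1}(\RPP)$ are nonempty, which follows from the two nonemptiness assumptions), and that the "$\varphi^*(x^*)=\pinf$'' branch is handled consistently across both — both sub-results return $\pinf$ there, so the min is $\pinf$, matching~\ref{t:7iva}.
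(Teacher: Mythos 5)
Your handling of \ref{t:7i}--\ref{t:7iii} and \ref{t:7iva} is exactly the paper's route: these items are read off from Proposition~\ref{p:302}\ref{p:302ii-}--\ref{p:302ii} together with \eqref{e:perspective2-}, and your case bookkeeping there is fine. The problem is in \ref{t:7ivb}, where you claim that for $\minf<\varphi^*(x^*)<0$ the term $\big(\ehaute{\varphi}\ppersp s\big)^*(x^*,y^*)$ equals $\pinf$, so that the minimum trivially picks out the $\ebase{\varphi}$ branch. That is false in general. Applying \ref{t:7iii} to $\ehaute{\varphi}$, the relevant quantity is not $\varphi^*(x^*)$ but $\ehaut{\varphi}(x^*)=\max\{\varphi^*(x^*),0\}+\iota_{\cconv(\varphi^*)^{-1}(\RPP)}(x^*)$ (Lemma~\ref{l:24}\ref{l:24ii}); when $\varphi^*(x^*)<0$ but $x^*\in\cconv(\varphi^*)^{-1}(\RPP)$ this value is $0$, and the conjugate is then $\sigma_{\cconv S}(y^*)$, not $\pinf$. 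The situation is not symmetric to the $\RPP$ side, because $\ebas{\varphi}$ carries the indicator of $(\varphi^*)^{-1}(\RM)$ while $\ehaut{\varphi}$ is only truncated at $0$; and the remark following Lemma~\ref{l:24} shows that $\cconv(\varphi^*)^{-1}(\RPP)$ can indeed meet $(\varphi^*)^{-1}(\RMM)$, so this branch genuinely occurs.

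Consequently your argument does not rule out that the minimum in \ref{t:7ivb} drops below the value prescribed by \ref{t:7iva} on the set $\{\minf<\varphi^*(x^*)<0\}$; to close the gap you must show
\begin{equation*}
-\varphi^*(x^*)\,\haute{s}\bigg(\frac{y^*}{-\varphi^*(x^*)}\bigg)\;\leq\;\sigma_{\cconv S}(y^*),
\end{equation*}
which is what the paper does: writing the left-hand side as $\sup_{y\in\dom\haut{s}}\big(\pair{y}{y^*}+\varphi^*(x^*)\haut{s}(y)\big)$ and using Lemma~\ref{l:24}\ref{l:24ii+} (namely $\haut{s}\geq 0$ on $\dom\haut{s}\subset\cconv S$, with $\varphi^*(x^*)<0$) gives the bound, and only then does $\min\{\cdot,\sigma_{\cconv S}(y^*)\}$ return the $\ebase{\varphi}$ value. (Your remark that on $\varphi^*(x^*)=0$ "both terms give $\sigma_{\cconv S}(y^*)$" is also slightly off — the $\ehaute{\varphi}$ term can be $\pinf$ there when $x^*\notin\cconv(\varphi^*)^{-1}(\RPP)$ — but that inaccuracy is harmless since the minimum is still $\sigma_{\cconv S}(y^*)$.) The rest of your \ref{t:7ivb} plan, including verifying via Lemma~\ref{l:12} and Lemma~\ref{l:24} that $\ebase{\varphi}$ and $\ehaute{\varphi}$ satisfy the hypotheses of \ref{t:7i} and \ref{t:7iii}, is in line with the paper.
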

\begin{proof} 
Claims \ref{t:7i}--\ref{t:7iva} follow from
Proposition~\ref{p:302}\ref{p:302ii-}--\ref{p:302ii} and
Definition~\ref{d:persp}. It remains to show \ref{t:7ivb}.
Since $\varphi$ is proper, $\minf\notin\varphi^*(\XX^*)$.
Moreover, $\dom\varphi^*\neq\emp$ and hence
$\varphi^*\in\Gamma_{0}(\XX^*)$. Therefore, applying items
\ref{l:120+}, \ref{l:12iii}, and \ref{l:12v} in 
Lemma~\ref{l:12} to $\varphi^*$ and invoking 
Lemma~\ref{l:6}\ref{l:6iv} and \ref{t:7i} yield
\begin{equation}
\label{e:peu_importe}
\big(\ebase{\varphi}\ppersp{s}\big)^{*}(x^*,y^*)=
\begin{cases}
-\varphi^{*}(x^*)\haute{s}\Bigg(\dfrac{y^*}{-\varphi^{*}(x^*)}
\Bigg),&\text{if}\:\:\minf<\varphi^*(x^*)<0;\\
\sigma_{\cconv S}(y^*),&\text{if}\:\:\varphi^*(x^*)=0;\\[3mm]
\pinf,&\text{if}\:\:0<\varphi^*(x^*)\leq\pinf.
\end{cases}
\end{equation} 
Likewise, using Lemma~\ref{l:24} and \ref{t:7iii}, we arrive at
\begin{equation}
\label{e:r75}
(\ehaute{\varphi}\ppersp s)^*(x^*,y^*)=
\begin{cases}
\varphi^*(x^*)\base{(-s)}\Bigg(\dfrac{y^*}{\varphi^*(x^*)}\Bigg),
&\text{if}\:\:0<\varphi^*(x^*)<\pinf;\\
\sigma_{\cconv S}(y^*),&\text{if}\:\: 
\minf<\varphi^{*}(x^*)\leq 0\\ 
&\quad\text{and}\:\: x^*\in\cconv(\varphi^*)^{-1}(\RPP);\\[3mm]
\pinf,&\text{otherwise.}
\end{cases}
\end{equation}
If $0\leq\varphi^{*}(x^*)\leq\pinf$, we deduce the identity from 
\ref{t:7iva}, 
\eqref{e:peu_importe}, and
\eqref{e:r75}. Now assume that $\minf<\varphi^*(x^*)<0$.
Lemma~\ref{l:24}\ref{l:24ii+} asserts that 
$\haut{s}(\dom\haut{s})\subset\RP$ and $\dom\haut{s}\subset\cconv 
S$. Hence, 
\begin{align}
\label{e:f5}
-\varphi^*(x^*)\haute{s}\bigg(\dfrac{y^*}
{-\varphi^*(x^*)}\bigg)
&=-\varphi^*(x^*)\sup_{y\in\dom\haut{s}}\bigg(\pair{y}{
\dfrac{y^*}{-\varphi^*(x^*)}}-\haut{s}(y)\bigg)
\nonumber\\
&=\sup_{y\in\cconv 
S}\big(\pair{y}{y^*}+
\varphi^*(x^*)\haut{s}(y)\big)
\nonumber\\[3mm]
&\leq\sup_{y\in\cconv 
S}\pair{y}{y^*}\nonumber\\[3mm]
&=\sigma_{\cconv S}(y^*),
\end{align}
which yields
\begin{equation}\label{e:12121}
\min\{\big(\ebase{\varphi}\ppersp
s\big)^*(x^*,y^*),\big(\ehaute{\varphi}\ppersp
s\big)^*(x^*,y^*)\}=-\varphi^*(x^*)\haute{s}\bigg(\dfrac{y^*}
{-\varphi^*(x^*)}\bigg).
\end{equation}
Thus, the conclusion follows from \ref{t:7iva}.
\end{proof}

We conclude this section by establishing conditions under which the
preperspective admits a continuous affine minorant. Note that, in
view of Lemma~\ref{l:6}\ref{l:6ii} and 
Theorem~\ref{t:7}\ref{t:7ii}, $\cam\varphi=\emp$ $\Rightarrow$
$\cam(\varphi\ppersp s)=\emp$.

\begin{corollary}
\label{c:59}
Let $\varphi\colon\XX\to\RX$ be proper and such that
$\cam\varphi\neq\emp$ and let $s\colon\YY\to\RXX$ be such that
$S=s^{-1}(\RPP)\neq\emp$. Then
\begin{equation}
\label{e:eq1c39}
\cam (\varphi\ppersp s)\neq\emp\quad\Leftrightarrow\quad
{\rm\big[\:}(\varphi^*)^{-1}(\RM)\neq\emp\quad\text{or}\quad
\cam\pbas{(-s)}\neq\emp\:{\rm\big]}.
\end{equation}
\end{corollary}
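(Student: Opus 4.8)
The plan is to use Lemma~\ref{l:6}\ref{l:6ii}, which characterizes $\cam(\varphi\ppersp s)\neq\emp$ as $\dom(\varphi\ppersp s)^*\neq\emp$, and then to read off $\dom(\varphi\ppersp s)^*$ from the conjugate formulas already established. Since $\varphi$ is proper with $\cam\varphi\neq\emp$, Lemma~\ref{l:6}\ref{l:6ii} gives $\dom\varphi^*\neq\emp$ and $\minf\notin\varphi^*(\XX^*)$, so $\varphi^*\in\Gamma_0(\XX^*)$ and $\varphi^*(\XX^*)\subset\RX$. Thus exactly one of the following holds: either $(\varphi^*)^{-1}(\RMM)=\emp$ (equivalently $\varphi^*\geq 0$), or $(\varphi^*)^{-1}(\RMM)\neq\emp$; and independently either $(\varphi^*)^{-1}(\RPP)=\emp$ or not. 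I would organize the proof as a case analysis covering the four resulting regimes, each handled by the appropriate part of Theorem~\ref{t:7} (or Proposition~\ref{p:302}), and in each case determine whether there exists $(x^*,y^*)$ with $(\varphi\ppersp s)^*(x^*,y^*)<\pinf$.

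Concretely: First, suppose $(\varphi^*)^{-1}(\RMM)\neq\emp$. Pick $x^*$ with $\varphi^*(x^*)<0$. By Proposition~\ref{p:302}\ref{p:302iii}, $(\varphi\ppersp s)^*(x^*,y^*)=-\varphi^*(x^*)s^{\wedge *}(y^*/(-\varphi^*(x^*)))$; since $s^{\wedge *}=(\phaut{s})^*$ is the conjugate of a function bounded below by $0$ off $\emp$... actually more carefully, $\phaut{s}\geq 0$ is not $\equiv\pinf$ on $S$, so $(\phaut{s})^*\geq -0 = 0$ is... I would instead argue that $\phaut{s}\colon\YY\to\RX$ has $\phaut{s}\geq\iota_{?}$; in any case $(\phaut{s})^*(0)=-\inf\phaut{s}\leq 0<\pinf$, hence $(\varphi\ppersp s)^*(x^*,0)<\pinf$, giving $\cam(\varphi\ppersp s)\neq\emp$ and confirming the $\Leftarrow$ direction in this subcase (and also, since $(\varphi^*)^{-1}(\RM)\supset(\varphi^*)^{-1}(\RMM)\neq\emp$, the left bracket in \eqref{e:eq1c39} holds). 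Second, symmetrically, suppose $\cam\pbas{(-s)}\neq\emp$, i.e.\ $\dom(-s)^{\vee *}=\dom\pbase{(-s)}\neq\emp$ by Lemma~\ref{l:6}\ref{l:6ii}. If moreover $(\varphi^*)^{-1}(\RPP)\neq\emp$, choose $x^*$ with $\varphi^*(x^*)>0$ and $y^*$ with $(-s)^{\vee*}(y^*/\varphi^*(x^*))<\pinf$; then Proposition~\ref{p:302}\ref{p:302ii} gives $(\varphi\ppersp s)^*(x^*,y^*)<\pinf$. If instead $(\varphi^*)^{-1}(\RPP)=\emp$ but $(\varphi^*)^{-1}(\{0\})\neq\emp$, pick $x^*\in(\varphi^*)^{-1}(\{0\})$; by Proposition~\ref{p:302}\ref{p:302iiv}, $(\varphi\ppersp s)^*(x^*,y^*)=\sigma_{\cconv S}(y^*)$, and since $S\neq\emp$ we have $\sigma_{\cconv S}(0)=0<\pinf$, so again $\cam(\varphi\ppersp s)\neq\emp$. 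The only remaining situation for the $\Leftarrow$ direction is $(\varphi^*)^{-1}(\RMM)=\emp$ and the set $(\varphi^*)^{-1}(\RM)=(\varphi^*)^{-1}(\{0\})$; here if $(\varphi^*)^{-1}(\{0\})\neq\emp$ the previous paragraph applies, so the left bracket holding is covered; and if $\cam\pbas{(-s)}\neq\emp$ we still need $(\varphi^*)^{-1}(\RPP)\neq\emp$ to run the argument — but if $(\varphi^*)^{-1}(\RPP)=\emp$ as well, then $\varphi^*\equiv$ (its value is $\leq 0$ everywhere on $\dom\varphi^*$ and $\geq$... ) $\dom\varphi^*=(\varphi^*)^{-1}(\{0\})\neq\emp$, so the left bracket holds anyway. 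Thus in every case where the right side of \eqref{e:eq1c39} holds, $\cam(\varphi\ppersp s)\neq\emp$.

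For the forward ($\Rightarrow$) direction, I would prove the contrapositive: assume $(\varphi^*)^{-1}(\RM)=\emp$ and $\cam\pbas{(-s)}=\emp$, and show $\cam(\varphi\ppersp s)=\emp$, i.e.\ $(\varphi\ppersp s)^*\equiv\pinf$. From $(\varphi^*)^{-1}(\RM)=\emp$ we get $\varphi^*(\XX^*)\subset\RPX$ with $(\varphi^*)^{-1}(\RPP)=\dom\varphi^*\neq\emp$, so Theorem~\ref{t:7}\ref{t:7iii} applies: for $x^*$ with $0<\varphi^*(x^*)<\pinf$, $(\varphi\ppersp s)^*(x^*,y^*)=\varphi^*(x^*)\base{(-s)}(y^*/\varphi^*(x^*))$, and $\cam\pbas{(-s)}=\emp$ means, via Lemma~\ref{l:6}\ref{l:6ii} again, that $\base{(-s)}=(-s)^{\vee **}\equiv$ is nowhere finite — indeed $\dom\pbase{(-s)}=\dom(-s)^{\vee *}=\emp$ forces $(-s)^{\vee **}$ to take only values in $\{\minf,\pinf\}$, and since it is the biconjugate it equals $\pinf$ wherever it is $>\minf$; one must also rule out the value $\minf$, which follows because $(-s)^{\vee **}\leq\pbas{(-s)}$ and $\pbas{(-s)}$ never takes the value $\minf$. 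Hence $\base{(-s)}\equiv\pinf$, so $(\varphi\ppersp s)^*(x^*,y^*)=\pinf$ for all such $x^*$ and all $y^*$. For the other possible values: $\varphi^*(x^*)=\pinf$ gives $\pinf$ by that part of \eqref{e:r7}; and $\varphi^*(x^*)=0$ is impossible here since $(\varphi^*)^{-1}(\RM)=\emp$; and $\varphi^*(x^*)<0$ is likewise impossible. So $(\varphi\ppersp s)^*\equiv\pinf$, completing the contrapositive.

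The main obstacle I anticipate is the careful bookkeeping at the boundary value $\varphi^*=0$ and the interplay between $(\varphi^*)^{-1}(\RM)$ versus $(\varphi^*)^{-1}(\RMM)$: the hypothesis in \eqref{e:eq1c39} uses $\RM$ (closed), whereas Theorem~\ref{t:7} and Proposition~\ref{p:302} branch on the open conditions $\RMM$, $\{0\}$, $\RPP$. The subtle point is that when $(\varphi^*)^{-1}(\RMM)=\emp$ but $(\varphi^*)^{-1}(\{0\})\neq\emp$, the left bracket of \eqref{e:eq1c39} is satisfied and one must independently produce a finite value of $(\varphi\ppersp s)^*$ — this is exactly Proposition~\ref{p:302}\ref{p:302iiv} together with $S\neq\emp$ giving $\sigma_{\cconv S}(0)=0$ — and this case must not be conflated with the $\cam\pbas{(-s)}$ clause. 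A secondary technical nuisance is justifying that $\cam\pbas{(-s)}=\emp$ forces $\base{(-s)}\equiv\pinf$ (not $\equiv\minf$ on some set), for which one invokes $\base{(-s)}\leq\pbas{(-s)}<\pinf$... no, rather $\base{(-s)}\leq\pbas{(-s)}$ and $\pbas{(-s)}$ avoids $\minf$, combined with Lemma~\ref{l:6}\ref{l:6iii}. Once these boundary bookkeeping issues are dispatched, the rest is a routine assembly of the already-proved conjugate formulas.
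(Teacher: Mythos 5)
Your strategy is exactly the paper's: use Lemma~\ref{l:6}\ref{l:6ii} to translate $\cam(\varphi\ppersp s)\neq\emp$ into $\dom(\varphi\ppersp s)^*\neq\emp$ and then decide this by inspecting the conjugate formulas of Proposition~\ref{p:302} and Theorem~\ref{t:7} according to the sign of $\varphi^*$; the paper merely compresses your case analysis into one implication obtained from Theorem~\ref{t:7}\ref{t:7iii} together with an ``inspection'' of Theorem~\ref{t:7}\ref{t:7i}--\ref{t:7iva}. Your treatment of the $\Leftarrow$ direction is correct, including the boundary bookkeeping: the case $(\varphi^*)^{-1}(\RMM)\neq\emp$ via $\haute{s}(0)=-\inf\phaut{s}\in\RM$, the case $\varphi^*(x^*)=0$ via $\sigma_{\cconv S}(0)=0$, and the observation that if both $(\varphi^*)^{-1}(\RMM)$ and $(\varphi^*)^{-1}(\RPP)$ are empty then $\dom\varphi^*=(\varphi^*)^{-1}(\{0\})\neq\emp$.

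There is, however, one step of your contrapositive that is wrong as written. You identify $\base{(-s)}$ with the biconjugate $\bas{(-s)}$ (a function on $\YY$) and then try to exclude the value $\minf$ for it on the grounds that it is majorized by $\pbas{(-s)}$, which never takes the value $\minf$. A majorant can never exclude the value $\minf$, and in fact the opposite conclusion holds: $\cam\pbas{(-s)}=\emp$ is equivalent, by Lemma~\ref{l:6}\ref{l:6ii}, to $\bas{(-s)}\equiv\minf$ --- this is precisely how $\bas{(-s)}$ is used in the proof of Theorem~\ref{t:55}\ref{t:55iiia}. Fortunately, the function entering \eqref{e:r7} is not $\bas{(-s)}$ but its conjugate $\base{(-s)}$, which lives on $\YY^*$, and by Lemma~\ref{l:6}\ref{l:6i} it coincides with $\pbase{(-s)}$; since $\cam\pbas{(-s)}=\emp$ is equivalent (again by Lemma~\ref{l:6}\ref{l:6ii}) to $\dom\pbase{(-s)}=\emp$, you obtain $\base{(-s)}\equiv\pinf$ immediately, with no $\minf$-exclusion argument at all. (The same confusion resurfaces in your closing remark: the inequality $\base{(-s)}\leq\pbas{(-s)}$ does not even typecheck, the two functions being defined on $\YY^*$ and $\YY$ respectively.) With this one-line repair the contrapositive, and hence the whole proof, is correct and coincides with the paper's argument; the rest of \eqref{e:eq1c39} then follows exactly as you assembled it.
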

\begin{proof} 
Lemma~\ref{l:6}\ref{l:6ii} asserts
that $\cam (\varphi\ppersp s)=\emp$ 
if and only if $(\varphi\ppersp s)^{*}\equiv\pinf$. In view of
Theorem~\ref{t:7}\ref{t:7iii}, 
\begin{equation}
\label{e:eqeqdx}
\Big[\varphi^* (\XX^*)\subset\RPPX\quad\text{and}
\quad\base{(-s)}\equiv\pinf\Big]\quad
\Rightarrow\quad (\varphi\ppersp s)^{*}\equiv\pinf.
\end{equation}
An inspection of items \ref{t:7i}--\ref{t:7iva} in
Theorem~\ref{t:7} shows that the converse implication also
holds. Altogether, \eqref{e:eq1c39} follows from \eqref{e:eqeqdx}
and Lemma~\ref{l:6}\ref{l:6i}--\ref{l:6ii}. 
\end{proof}

\begin{example}
\label{ex:59}
Let $\varphi\colon\XX\to\RX$ be proper and convex, and let 
$s\colon\YY\to\RXX$ be such that $S=s^{-1}(\RPP)\neq\emp$. 
Suppose that one of the following holds:
\begin{enumerate}
\item
\label{ex:59i}
$\Gamma_0(\XX)\ni\varphi\geq\rec\varphi$.
\item
\label{ex:59ii} 
$\varphi$ is lower semicontinuous at $0$ and
$\varphi(0)\in\RPP$. 
\item 
\label{ex:59iii} 
$\cam\varphi\neq\emp$ and $-s\in\Gamma_0(\YY)$. 
\end{enumerate}
Then $\cam(\varphi\ppersp s)\neq\varnothing$. 
\end{example}
\begin{proof}
\ref{ex:59i}: 
This follows from Lemma~\ref{l:6}\ref{l:6iv}, 
Corollary~\ref{c:59}, and Lemma~\ref{l:12e}.

\ref{ex:59ii}: 
As in \cite[Proposition 13.44]{Livre1}, we have 
$\inf\varphi^*(\XX^*)=-\varphi^{**}(0)=-\varphi(0)\in\RMM$, which
yields $(\varphi^*)^{-1}(\RM)\neq\emp$. Hence the conclusion
follows from Lemma~\ref{l:6}\ref{l:6ii} and Corollary~\ref{c:59}.

\ref{ex:59iii}: According to Lemma~\ref{l:6}\ref{l:6iv}, 
$\emp\neq\cam(-s)\subset\cam\pbas{(-s)}$. Therefore, the
conclusion follows from Corollary~\ref{c:59}.
\end{proof}

\section{Perspective functions}
\label{sec:5}

We investigate the properties of the perspective function
introduced in Definition~\ref{d:persp}. We preface our analysis
with the case of affine scaling.

\begin{example}
\label{ex:307}
Let $\varphi\in\Gamma_0(\XX)$, suppose that 
$w^*\in\YY^*\smallsetminus\{0\}$, let 
$\overline{y}\in\YY$, and set $s=w^*-\pair{\overline{y}}{w^*}$.
Let $x\in\XX$ and $y\in\YY$. Then
\begin{align}
\label{e:111}
(\varphi\persp s)(x,y)
&=
\begin{cases}
\pair{y-\overline{y}}{w^*}\,\varphi\bigg(
\dfrac{x}{\pair{y-\overline{y}}{w^*}}\bigg),
&\text{if}\;\;\pair{y-\overline{y}}{w^*}>0;\\
(\rec\varphi)(x),&\text{if}\;\;\pair{y-\overline{y}}{w^*}=0;\\
\pinf,&\text{otherwise}.
\end{cases}
\end{align}
In particular, if $\YY=\RR$, $w^*=1$, and $\overline{y}=0$, we
recover the fact that $\varphi\persp s=\widetilde{\varphi}$
mentioned in Section~\ref{sec:1} (see \eqref{e:0}).
\end{example}
\begin{proof}
Since $-s\in\Gamma_0(\YY)$, it follows from 
Lemma~\ref{l:6}\ref{l:6iv} and
Example~\ref{ex:59}\ref{ex:59iii} that 
$\cam(\varphi\ppersp s)\neq\emp$. Therefore, we deduce from 
Definition~\ref{d:persp}, Lemma~\ref{l:9}\ref{l:9i}, and
Example~\ref{ex:c7} that
\begin{align}
(\varphi\persp s)(x,y)
&=(\varphi\ppersp s)^{\breve{}}(x,y)\nonumber\\
&=(\varphi\ppersp s)^{**}(x,y)\nonumber\\
&=\sup_{\substack{x^*\in\XX^*\\y^*\in\YY^*}}
\big(\pair{x}{x^*}+\pair{y}{y^*}-(\varphi\ppersp s)^*(x^*,y^*)\big)
\nonumber\\
&=\sup_{\substack{x^*\in\dom\varphi^*\\
\beta\in\left]\minf,-\varphi^*(x^*)\right]}}
\big(\pair{x}{x^*}+\beta\,\pair{y-\overline{y}}{w^*}\big)\nonumber\\
&=
\begin{cases}
\displaystyle{\sup_{x^*\in\dom\varphi^*}}
\big(\pair{x}{x^*}-\varphi^*(x^*)\pair{y-\overline{y}}{w^*}\big),
&\text{if}\;\;\pair{y-\overline{y}}{w^*}>0;\\
\sigma_{\dom\varphi^*}(x),
&\text{if}\;\;\pair{y-\overline{y}}{w^*}=0;\\
\pinf,&\text{if}\;\;\pair{y-\overline{y}}{w^*}<0,
\end{cases}
\end{align}
which, by virtue of Lemma~\ref{l:6}\ref{l:6iv} 
and Lemma~\ref{l:pjlaurent}\ref{l:pjlaurentii},
yields \eqref{e:111}.
\end{proof}

We are now ready to present our main result, which provides
explicit expressions of the perspective function in the general
case of nonlinear scaling. We state our theorem in a setting that
avoids the degenerate case when 
$(\varphi\persp s)(\XX\times\YY)\subset\{\minf,\pinf\}$.

\begin{theorem}
\label{t:55}
Let $\varphi\colon\XX\to\RX$ be proper and such 
that $\cam\varphi\neq\emp$,
let $s\colon\YY\to\RXX$ be such that $S=s^{-1}(\RPP)\neq\emp$,
and suppose that 
\begin{equation}
\label{e:55hyp}
(\varphi^*)^{-1}(\RM)\neq\emp\quad\text{or}\quad
\cam\pbas{(-s)}\neq\emp.
\end{equation}
Then
\begin{enumerate}
\item 
\label{t:55i-}
$\varphi\persp s\in\Gamma_0(\XX\oplus\YY)$.
\end{enumerate}
Furthermore, let $x\in\XX$ and $y\in\YY$. Then the following are
satisfied:
\begin{enumerate}[resume]
\item 
\label{t:55i} 
Suppose that $\varphi^*(\XX^*)\subset\RM\cup\{\pinf\}$ and 
$(\varphi^*)^{-1}(\RMM)\neq\emp$. Then 
\begin{equation} 
\label{e:37a}
\big(\varphi\persp s\big)(x,y)=
\begin{cases}
\haut{s}(y){\breve{\varphi}}
\bigg(\dfrac{x}{\haut{s}(y)}\bigg),
&\text{if}\;\;0<\haut{s}(y)<\pinf;\\
(\rec{\breve{\varphi}})(x),&
\text{if}\;\;\haut{s}(y)=0;\\[2mm]
\pinf,&\text{if}\;\;\haut{s}(y)=\pinf.
\end{cases}
\end{equation} 
\item 
\label{t:55ii} 
Suppose that $\varphi^*(\XX^*)\subset\{0,\pinf\}$. Then 
\begin{equation} 
\label{e:380}
\big(\varphi\persp s\big)(x,y)=
(\rec{\breve{\varphi}})(x)+\iota_{\cconv S}(y).
\end{equation} 
\item 
\label{t:55iii} 
Suppose that $\varphi^*(\XX^*)\subset\RPX$. Then the following 
are satisfied:
\begin{enumerate}
\item 
\label{t:55iiia} 
Suppose that $(\varphi^*)^{-1}(\{0\})\neq\emp$ and 
$\cam\pbas{(-s)}=\emp$. Then
\begin{equation}
\label{e:38a}
\big(\varphi\persp s\big)(x,y)
=\sigma_{(\varphi^*)^{-1}(\{0\})}(x)+\iota_{\cconv S}(y).
\end{equation}
\item 
\label{t:55iiib}
Suppose that $(\varphi^*)^{-1}(\RPP)\neq\emp$
and $\cam\pbas{(-s)}\neq\emp$. Then
\begin{equation}
\label{e:97a}
\big(\varphi\persp s\big)(x,y)=
\begin{cases}
-\bas{(-s)}(y){\breve{\varphi}}
\bigg(\dfrac{x}{-\bas{(-s)}(y)}\bigg),
&\text{if}\;\;\minf<\bas{(-s)}(y)<0;\\
(\rec\breve{\varphi})(x),
&\text{if}\;\;\bas{(-s)}(y)=0;\\
\pinf,&\text{if}\;\;\bas{(-s)}(y)=\pinf.
\end{cases}
\end{equation}
\end{enumerate}
\item 
\label{t:55v} 
Suppose that $(\varphi^*)^{-1}(\RMM)\neq\emp$ and that
$(\varphi^*)^{-1}(\RPP)\neq\emp$. Then the following are satisfied:
\begin{enumerate}
\item 
\label{t:55va0} 
$\big(\varphi\persp s\big)(x,y)=
\max\big\{\big(\ebase{\varphi}\persp s\big)(x,y),
\big(\ehaute{\varphi}\persp s\big)(x,y)\big\}$.
\item 
\label{t:55va} 
Suppose that
$\cam\pbas{(-s)}=\emp$. Then $\ebase{\varphi}\persp s\ge 
\ehaute{\varphi}\persp s$
and
\begin{equation} 
\label{e:37ae}
\big(\varphi\persp s\big)(x,y)=
\begin{cases}
\haut{s}(y)\ebase{\varphi}
\bigg(\dfrac{x}{\haut{s}(y)}\bigg),
&\text{if}\;\;0<\haut{s}(y)<\pinf;\\
(\rec{\ebase{\varphi}})(x),&
\text{if}\;\;\haut{s}(y)=0;\\[2mm]
\pinf,&\text{if}\;\;\haut{s}(y)=\pinf.
\end{cases}
\end{equation} 
\item 
\label{t:55vb} 
Suppose that $\cam\pbas{(-s)}\neq\emp$. Then
\begin{multline}
\label{e:cl6}
\hspace{-1.1cm}\big(\varphi\persp s\big)(x,y)=\\
\hspace{-1.1cm}\begin{cases}
\max\left\{\haut{s}(y){\ebase{\varphi}}
\bigg(\dfrac{x}{\haut{s}(y)}\bigg),-\bas{(-s)}(y){\ehaute{\varphi}}
\bigg(\dfrac{x}{-\bas{(-s)}(y)}\bigg)\right\},\:\:&\text{if}\:\:
0<\haut{s}(y)<\pinf;\\[4mm]
\max\left\{(\rec\ebase{\varphi})(x),-\bas{(-s)}(y)
{\ehaute{\varphi}}
\bigg(\dfrac{x}{-\bas{(-s)}(y)}\bigg)\right\},\:\:&\text{if}\:\: 
\bas{(-s)}(y)<0=\haut{s}(y);\\[4mm]
(\rec\breve{\varphi})(x),\:\:&\text{if}\:\: 
\bas{(-s)}(y)=0=\haut{s}(y);\\[4mm]
\pinf,&\text{if}\:\:\haut{s}(y)=\pinf,
\end{cases}
\end{multline}
where all the possible cases are exhausted.
\end{enumerate}
\end{enumerate}
\end{theorem}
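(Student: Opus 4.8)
The plan is to derive everything by conjugation: by Corollary~\ref{c:59} and hypothesis~\eqref{e:55hyp}, $\cam(\varphi\ppersp s)\neq\emp$, so Lemma~\ref{l:9}\ref{l:9i-}\&\ref{l:9i} give $\varphi\persp s=(\varphi\ppersp s)^{\breve{}}=(\varphi\ppersp s)^{**}\in\Gamma_0(\XX\oplus\YY)$, which is part~\ref{t:55i-}. For the explicit formulas I would compute $(\varphi\persp s)(x,y)=\bigl((\varphi\ppersp s)^*\bigr)^*(x,y)$ using the conjugate expressions already established in Theorem~\ref{t:7}. The key observation is that each branch of Theorem~\ref{t:7} expresses $(\varphi\ppersp s)^*$ as a preperspective-type object in the swapped variables $(y^*,x^*)$, and taking its conjugate back should reproduce the claimed perspective formula with $\varphi$ replaced by $\breve\varphi$ (since conjugation only sees $\varphi^*$, and $\breve\varphi^*=\varphi^*$ by Lemma~\ref{l:6}\ref{l:6i}) and $s$ replaced by the appropriate $\blacktriangle$ or $\blacktriangledown$ envelope.

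Concretely, for part~\ref{t:55i} I would start from \eqref{e:37conj} and compute
\begin{equation*}
(\varphi\persp s)(x,y)=\sup_{x^*,y^*}\Bigl(\pair{x}{x^*}+\pair{y}{y^*}-(\varphi\ppersp s)^*(x^*,y^*)\Bigr),
\end{equation*}
splitting the supremum over $x^*$ according to the sign of $\varphi^*(x^*)$. On the region $\minf<\varphi^*(x^*)<0$ the inner term is $\pair{x}{x^*}+\pair{y}{y^*}+\varphi^*(x^*)\haute{s}(y^*/(-\varphi^*(x^*)))$; the supremum over $y^*$ is, by definition of the conjugate and the fact that $\haute{s}=\haut{s}^{**}$ is already in $\Gamma_0$ (Lemma~\ref{l:24}\ref{l:24ii+}), equal to $\pair{x}{x^*}-\varphi^*(x^*)\,\haut{s}(y)^{\,\text{(scaled)}}$ — more precisely it produces $-\varphi^*(x^*)$ times the value of the conjugate of $\haute{s}$ at the relevant point, which is $\haut{s}(y)$ when $\haut{s}(y)<\pinf$. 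Carrying the remaining supremum over $x^*\in(\varphi^*)^{-1}(\RMM)$ together with the $\varphi^*(x^*)=0$ branch (contributing $\sigma_{\cconv S}(y^*)$, whose conjugate in $y^*$ is $\iota_{\cconv S}(y)$), and recognizing that $\sup_{x^*}(\pair{x}{x^*}-t\,\varphi^*(x^*))=t\,\varphi^{**}(x/t)=t\,\breve\varphi(x/t)$ for $t>0$ and $=\sigma_{\dom\varphi^*}(x)=(\rec\breve\varphi)(x)$ for $t=0$ (Lemma~\ref{l:6}\ref{l:6iv} applied to $\breve\varphi$, Lemma~\ref{l:pjlaurent}\ref{l:pjlaurentii}), yields \eqref{e:37a}. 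Part~\ref{t:55ii} is the analogous but simpler computation from \eqref{e:38conj}; part~\ref{t:55iii} mirrors \ref{t:55i} starting from \eqref{e:r7}, using $\base{(-s)}$ in place of $\haute{s}$ and splitting into the two subcases \ref{t:55iiia}/\ref{t:55iiib} according to which clause of \eqref{e:55hyp} holds (equivalently, whether $\cam\pbas{(-s)}=\emp$ forces $\base{(-s)}\equiv\pinf$).

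For part~\ref{t:55v}, claim~\ref{t:55va0} follows from Theorem~\ref{t:7}\ref{t:7ivb}: conjugating the identity $(\varphi\ppersp s)^*=\min\{(\ebase\varphi\ppersp s)^*,(\ehaute\varphi\ppersp s)^*\}$ turns the pointwise minimum of two functions into the pointwise maximum of their conjugates, i.e.\ $(\varphi\ppersp s)^{**}=\max\{(\ebase\varphi\ppersp s)^{**},(\ehaute\varphi\ppersp s)^{**}\}$, and each $\ebase\varphi$, $\ehaute\varphi$ satisfies the hypotheses of parts~\ref{t:55i} resp.\ \ref{t:55iii} (using Lemma~\ref{l:12}\ref{l:12iii}\&\ref{l:12v} and Lemma~\ref{l:24}\ref{l:24iii} to identify the sign structure of their conjugates, namely $(\ebase\varphi)^*=\ebas{(\varphi^*)}$ and $(\ehaute\varphi)^*=\ephaut{(\varphi^*)}$), so part~\ref{t:55va0} reduces to already-proven cases. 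Then \ref{t:55va} and \ref{t:55vb} are obtained by substituting the formulas from \ref{t:55i} and \ref{t:55iiia}/\ref{t:55iiib} into the maximum, with the ordering $\ebase\varphi\persp s\ge\ehaute\varphi\persp s$ in the case $\cam\pbas{(-s)}=\emp$ coming from the fact that then the $\ehaute\varphi$-branch is dominated (paralleling the $\min$ computation in \eqref{e:12121} of Theorem~\ref{t:7}), and the various sub-branches of \eqref{e:cl6} just recording which of $\haut{s}(y)$, $-\bas{(-s)}(y)$ are finite/positive/zero.

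The main obstacle I expect is the bookkeeping in part~\ref{t:55i} when $\haut{s}(y)=0$: one must show the supremum over $x^*\in(\varphi^*)^{-1}(\RMM)$ and over the boundary case $\varphi^*(x^*)=0$ combine to give exactly $(\rec\breve\varphi)(x)$ and that no spurious $\pinf$ or gap appears — this uses $\rec\breve\varphi=\sigma_{\dom\varphi^*}$ together with the fact that $\cconv\dom\varphi^*$'s support function is unchanged under taking $(\varphi^*)^{-1}(\RM)$ vs.\ its interior, which requires Lemma~\ref{l:12}\ref{l:12i}. A secondary technical point is justifying the interchange "$\sup_{x^*}\sup_{y^*}=\sup_{y^*}\sup_{x^*}$ and then evaluating the inner conjugate of $\haute{s}$ on its domain" — this is legitimate because $\haut{s}\in\Gamma_0(\YY)$ with $\dom\haut{s}\subset\cconv S$ and values in $\RP$ by Lemma~\ref{l:24}\ref{l:24ii+}, so the scaling by $-\varphi^*(x^*)>0$ is harmless and the conjugate of $\haute{s}=(\haut{s})^{**}$ equals that of $\haut{s}$ by Lemma~\ref{l:6}\ref{l:6i}.
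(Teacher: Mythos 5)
Your plan is correct and follows essentially the same route as the paper's proof: Corollary~\ref{c:59} with Lemma~\ref{l:9} for \ref{t:55i-}, then biconjugation of the conjugate formulas of Theorem~\ref{t:7} with the supremum split according to the sign of $\varphi^*(x^*)$, the inner supremum over $y^*$ evaluated through the $\blacktriangle$/$\blacktriangledown$ envelopes, and $\rec\breve{\varphi}=\sigma_{\dom\varphi^*}$ handling the zero-scaling branch, with \ref{t:55v} reduced to the earlier cases via Theorem~\ref{t:7}\ref{t:7ivb}. The only slight divergence is that you obtain $\ebase{\varphi}\persp s\geq\ehaute{\varphi}\persp s$ in \ref{t:55va} by comparing the two conjugates pointwise (min/max duality, valid because $\cam\pbas{(-s)}=\emp$ forces $\base{(-s)}\equiv\pinf$), whereas the paper argues through a primal lower-bound chain using \ref{t:55iiia}; both work.
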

\begin{proof}
Since $\cam\varphi\neq\emp$ and $\varphi\not\equiv\pinf$, by
virtue of Lemma~\ref{l:9}\ref{l:9i-}--\ref{l:9i}, we have
\begin{equation}
\label{e:dc4}
\varphi^*\in\Gamma_0(\XX^*)\quad\text{and}\quad
\varphi^{**}=\breve{\varphi}\in\Gamma_0(\XX).
\end{equation}
In turn, it follows from \eqref{e:55hyp}, Corollary~\ref{c:59}, 
Definition~\ref{d:persp}, and Lemma~\ref{l:9}\ref{l:9i} that 
\begin{equation}
\label{e:ppp}
\cam(\varphi\ppersp s)\neq\emp\quad\text{and}\quad
\varphi\persp s=(\varphi\ppersp{s})^{\breve{}}=
(\varphi\ppersp{s})^{**}.
\end{equation}
We also derive from Lemma~\ref{l:6}\ref{l:6i}, 
\eqref{e:dc4}, and 
Lemma~\ref{l:pjlaurent}\ref{l:pjlaurentii} that
\begin{equation}
\label{e:h5ys}
\sigma_{\dom\varphi^*}=\sigma_{\dom(\breve{\varphi})^*}
=\rec\breve{\varphi}
\end{equation}
and from Proposition~\ref{p:28}\ref{p:28ii} that
\begin{equation}
\label{e:s8}
\dom({\varphi}\ppersp s)\neq\emp.
\end{equation}

\ref{t:55i-}: 
This follows from \eqref{e:ppp},
\eqref{e:s8}, and Lemma~\ref{l:9}\ref{l:9i-}. 

\ref{t:55i}: Theorem~\ref{t:7}\ref{t:7i} implies that
$\dom({\varphi}\ppersp s)^*\subset 
(\varphi^*)^{-1}(\RM)\times\YY^*$. Consequently,
\begin{align}
\label{e:1144i}
({\varphi}\ppersp s)^{**}(x,y)
&=\max\Bigg\{
\sup_{\substack{x^*\in 
(\varphi^*)^{-1}(\RMM)\\y^*\in\YY^*}}
\big(\pair{x}{x^*}+\pair{y}{y^*}-
({\varphi}\ppersp s)^*(x^*,y^*)\big),\nonumber\\
&\hskip 16mm\sup_{\substack{x^*\in 
(\varphi^*)^{-1}(\{0\})\\y^*\in\YY^*}}
\big(\pair{x}{x^*}+\pair{y}{y^*}-
({\varphi}\ppersp s)^*(x^*,y^*)\big)\Bigg\}.
\end{align}
Moreover, by Theorem~\ref{t:7}\ref{t:7i}, \eqref{e:haut}, and 
Lemma~\ref{l:6}\ref{l:6i},
\begin{align}
\label{e:65362i}
\sup_{\substack{x^*\in(\varphi^*)^{-1}(\RMM)\\y^*\in\YY^*}}
\big(\pair{x}{x^*}+\pair{y}{y^*}-
({\varphi}\ppersp s)^*(x^*,y^*)\big)\nonumber\\
&\hspace{-82mm}=\sup_{\substack{x^*\in 
(\varphi^*)^{-1}(\RMM)\\y^*\in\YY^*}}
\Bigg(\pair{x}{x^*}+\pair{y}{y^*}+
\varphi^*(x^*)
\haute{s}\Bigg(\frac{y^*}
{-\varphi^*(x^*)}\Bigg)
\Bigg)\nonumber\\
&\hspace{-82mm}=\sup_{x^*\in(\varphi^*)^{-1}(\RMM)}
\Bigg(\pair{x}{x^*}-\varphi^*(x^*)\sup_{y^*\in\YY^*}
\Bigg(\pair{y}{\frac{y^*}{-\varphi^*(x^*)}}-\haute{s}
\Bigg(\frac{y^*}{-\varphi^*(x^*)}\Bigg)\Bigg)\Bigg)
\nonumber\\[3mm]
&\hspace{-82mm}
=\sup_{x^*\in(\varphi^*)^{-1}(\RMM)}
\big(\pair{x}{x^*}-\varphi^*(x^*)\haut{s}(y)\big)
\end{align}
and
\begin{align}
\label{e:0i}
\hskip -3mm
\sup_{\substack{x^*\in(\varphi^*)^{-1}(\{0\})\\y^*\in\YY^*}}
\big(\pair{x}{x^*}+\pair{y}{y^*}-
({\varphi}\ppersp s)^*(x^*,y^*)\big)
&=\sup_{\substack{x^*\in(\varphi^*)^{-1}(\{0\})\\y^*\in\YY^*}}
\big(\pair{x}{x^*}+\pair{y}{y^*}-
\sigma_{\cconv S}(y^*)\big)\nonumber\\
&=\sup_{x^*\in(\varphi^*)^{-1}(\{0\})}
\big(\pair{x}{x^*}+\iota_{\cconv S}(y)\big).
\end{align}
Hence, in view of \eqref{e:1144i} and \eqref{e:65362i},
\begin{equation}
\label{e:d71i}
({\varphi}\ppersp s)^{**}(x,y)
=\max\Bigg\{\sup_{x^*\in(\varphi^*)^{-1}(\RMM)}
\big(\pair{x}{x^*}-\varphi^{*}(x^*)\haut{s}(y)\big),
\sup_{x^*\in(\varphi^*)^{-1}(\{0\})}
\big(\pair{x}{x^*}+\iota_{\cconv S}(y)\big)\Bigg\}.
\end{equation}
In addition, Lemma~\ref{l:24}\ref{l:24ii+} yields
$\haut{s}(y)\in\RPX$. 
If $\haut{s}(y)=\pinf$, since 
$(\varphi^*)^{-1}(\RMM)\neq\emp$, then it follows from 
\eqref{e:d71i} that $({\varphi}\ppersp s)^{**}(x,y)=\pinf$. 
Now assume that $\haut{s}(y)\in\RP$. Then
Lemma~\ref{l:24}\ref{l:24ii+} yields 
\begin{equation}
\label{e:dr3i}
y\in\cconv S.
\end{equation}
Thus, if $\haut{s}(y)\in\RPP$, 
we deduce from \eqref{e:d71i}, \eqref{e:dr3i}, and \eqref{e:dc4} 
that
\begin{align}
\label{e:d7r1i}
({\varphi}\ppersp s)^{**}(x,y)
&=\sup_{x^*\in(\varphi^*)^{-1}(\RM)}
\big(\pair{x}{x^*}-\varphi^{*}(x^*)\haut{s}(y)\big)\nonumber\\
&=\sup_{x^*\in\dom\varphi^*}
\big(\pair{x}{x^*}-\varphi^{*}(x^*)\haut{s}(y)\big)\nonumber\\
&=\haut{s}(y)\breve{\varphi}\bigg(\dfrac{x}{\haut{s}(y)}\bigg).
\end{align}
Now, if $\haut{s}(y)=0$, we infer from \eqref{e:d71i} and
\eqref{e:h5ys} that 
\begin{equation}
(\varphi\ppersp s)^{**}(x,y)=
\max\big\{\sigma_{(\varphi^*)^{-1}(\RMM)}(x),
\sigma_{(\varphi^*)^{-1}(\{0\})}(x)\big\}
=\sigma_{\dom\varphi^*}(x)=\big(\rec\breve{\varphi}\big)(x).
\end{equation}
Hence, \eqref{e:37a} holds.

\ref{t:55ii}: Theorem~\ref{t:7}\ref{t:7ii} implies that
$\emp\neq\dom({\varphi}\ppersp s)^*\subset 
(\varphi^*)^{-1}(\{0\})\times\YY^*$. Hence, we have
\begin{align}
({\varphi}\ppersp s)^{**}(x,y)
&=\sup_{\substack{x^*\in 
(\varphi^*)^{-1}(\{0\})\\y^*\in\YY^*}}
\big(\pair{x}{x^*}+\pair{y}{y^*}-
({\varphi}\ppersp s)^*(x^*,y^*)\big)\nonumber\\
&=\sup_{\substack{x^*\in
(\varphi^*)^{-1}(\{0\})\\y^*\in\YY^*}}
\big(\pair{x}{x^*}+\pair{y}{y^*}-
\sigma_{\cconv S}(y^*)\big)\nonumber\\
&=\sup_{x^*\in\dom\varphi^*}
\big(\pair{x}{x^*}+\iota_{\cconv S}(y)\big)\nonumber\\
&=\sigma_{\dom\varphi^*}(x)+
\iota_{\cconv S}(y),
\end{align}
and we obtain \eqref{e:380} from \eqref{e:h5ys}. 

\ref{t:55iii}:
Theorem~\ref{t:7}\ref{t:7iii} implies that
$\dom({\varphi}\ppersp s)^*\subset 
(\varphi^*)^{-1}(\RP)\times\YY^*$, which yields
\begin{align}
\label{e:1144iii}
({\varphi}\ppersp s)^{**}(x,y)
&=\max\Bigg\{
\sup_{\substack{x^*\in 
(\varphi^*)^{-1}(\{0\})\\y^*\in\YY^*}}
\big(\pair{x}{x^*}+\pair{y}{y^*}-
({\varphi}\ppersp s)^*(x^*,y^*)\big),\nonumber\\
&\hskip 16mm\sup_{\substack{x^*\in 
(\varphi^*)^{-1}(\RPP)\\y^*\in\YY^*}}
\big(\pair{x}{x^*}+\pair{y}{y^*}-
({\varphi}\ppersp s)^*(x^*,y^*)\big)\Bigg\}.
\end{align}
Moreover, by Theorem~\ref{t:7}\ref{t:7iii}, 
as in \eqref{e:0i},
\begin{equation}
\label{e:0iii}
\hspace{-3mm}\sup_{\substack{x^*\in(\varphi^*)^{-1}
(\{0\})\\y^*\in\YY^*}}
\big(\pair{x}{x^*}+\pair{y}{y^*}-({\varphi}\ppersp s)^*(x^*,y^*)
\big)
=\sup_{x^*\in(\varphi^*)^{-1}(\{0\})}
\big(\pair{x}{x^*}+\iota_{\cconv S}(y)\big)
\end{equation}
and, using \eqref{e:bas} and Lemma~\ref{l:6}\ref{l:6i},
\begin{align}
\label{e:65362iii}
\sup_{\substack{x^*\in(\varphi^*)^{-1}(\RPP)\\y^*\in\YY^*}}
\big(\pair{x}{x^*}+\pair{y}{y^*}-
({\varphi}\ppersp s)^*(x^*,y^*)\big)\nonumber\\
&\hspace{-82mm}=\sup_{\substack{x^*\in 
(\varphi^*)^{-1}(\RPP)\\y^*\in\YY^*}}
\Bigg(\pair{x}{x^*}+\pair{y}{y^*}-
\varphi^*(x^*)
\base{(-s)}\Bigg(\frac{y^*}
{\varphi^*(x^*)}\Bigg)
\Bigg)\nonumber\\
&\hspace{-82mm}=\sup_{x^*\in(\varphi^*)^{-1}(\RPP)}
\Bigg(\pair{x}{x^*}+\varphi^*(x^*)\sup_{y^*\in\YY^*}
\Bigg(\pair{y}{\frac{y^*}{\varphi^*(x^*)}}-\base{(-s)}
\Bigg(\frac{y^*}{\varphi^*(x^*)}\Bigg)\Bigg)\Bigg)
\nonumber\\[3mm]
&\hspace{-82mm}
=\sup_{x^*\in(\varphi^*)^{-1}(\RPP)}
\big(\pair{x}{x^*}+\varphi^*(x^*)\bas{(-s)}(y)\big).
\end{align}
Combining \eqref{e:1144iii}, \eqref{e:0iii}, and 
\eqref{e:65362iii}, we get
\begin{multline}
\label{e:d71}
({\varphi}\ppersp s)^{**}(x,y)
=\\
\hskip -2mm\max\Bigg\{
\sup_{x^*\in(\varphi^*)^{-1}(\{0\})}
\big(\pair{x}{x^*}+\iota_{\cconv S}(y)\big),
\sup_{x^*\in(\varphi^*)^{-1}(\RPP)}
\big(\pair{x}{x^*}+\varphi^{*}(x^*)\bas{(-s)}(y)\big)
\Bigg\}.
\end{multline}

\ref{t:55iiia}: Lemma~\ref{l:6}\ref{l:6ii} asserts that
$\bas{(-s)}\equiv\minf$. Therefore, since
$(\varphi^*)^{-1}(\{0\})\neq\emp$, we deduce from \eqref{e:d71} 
that
\begin{equation}
({\varphi}\ppersp s)^{**}(x,y)
=\sup_{x^*\in(\varphi^*)^{-1}(\{0\})}
\bigl(\pair{x}{x^*}+\iota_{\cconv S}(y)\bigr)
=\sigma_{(\varphi^*)^{-1}(\{0\})}(x)+\iota_{\cconv S}(y),
\end{equation}
as announced in \eqref{e:38a}.

\ref{t:55iiib}: 
Lemma~\ref{l:12}\ref{l:120+} yields
$\bas{(-s)}(y)\in\RM\cup\{\pinf\}$. 
If $\bas{(-s)}(y)=\pinf$, since 
$(\varphi^*)^{-1}(\RPP)\neq\emp$, it follows from 
\eqref{e:d71} that $({\varphi}\ppersp s)^{**}(x,y)=\pinf$. 
Now assume that $\minf<\bas{(-s)}(y)\leq 0$. Then
Lemma~\ref{l:12}\ref{l:120+} yields 
\begin{equation}
\label{e:dr3}
y\in\cconv S.
\end{equation}
Thus, if $\bas{(-s)}(y)=0$, we infer from \eqref{e:d71} and
\eqref{e:h5ys} that 
\begin{equation}
(\varphi\ppersp s)^{**}(x,y)=
\max\big\{\sigma_{(\varphi^*)^{-1}(\{0\})}(x),
\sigma_{(\varphi^*)^{-1}(\RPP)}(x)\big\}
=\sigma_{\dom\varphi^*}(x)=\big(\rec\breve{\varphi}\big)(x).
\end{equation}
Next, assume that $\bas{(-s)}(y)<0$. Then we deduce from 
\eqref{e:dr3}, \eqref{e:d71}, and \eqref{e:dc4} that
\begin{align}
\label{e:d7r1}
({\varphi}\ppersp s)^{**}(x,y)
&=\sup_{x^*\in(\varphi^*)^{-1}(\RP)}
\big(\pair{x}{x^*}+\varphi^{*}(x^*)\bas{(-s)}(y)\big)\nonumber\\
&=\sup_{x^*\in\dom\varphi^*}
\big(\pair{x}{x^*}+\varphi^{*}(x^*)\bas{(-s)}(y)\big)\nonumber\\
&=-\bas{(-s)}(y)
\breve{\varphi}\bigg(\dfrac{x}{-\bas{(-s)}(y)}\bigg).
\end{align}
This verifies that \eqref{e:97a} holds.

\ref{t:55v}: 
We deduce from Lemma~\ref{l:12}\ref{l:120+} and
Lemma~\ref{l:24}\ref{l:24ii} that
$\ebas{\varphi}\in\Gamma_0(\XX^*)$ and
$\ehaut{\varphi}\in\Gamma_0(\XX^*)$. In turn, 
Lemma~\ref{l:6}\ref{l:6iv} yields
\begin{equation}
\label{e:cl8}
(\ebase{\varphi})^{\breve{}}=
\ebase{\varphi}\in\Gamma_0(\XX)
\quad\text{and}\quad
(\ehaute{\varphi})^{\breve{}}=
\ehaute{\varphi}\in\Gamma_0(\XX). 
\end{equation}
Note also that \eqref{e:dc4}, Lemma~\ref{l:12}\ref{l:12iii}, and
Lemma~\ref{l:24}\ref{l:24v} imply that
\begin{equation}
\label{e:cedweq}
(\ebas{\varphi})^{-1}(\RMM)=
(\varphi^*)^{-1}(\RMM)\neq\emp\quad\text{and}\quad
(\ehaut{\varphi})^{-1}(\{0\})\neq\emp.
\end{equation}
We derive from Corollary~\ref{c:59}, Lemma~\ref{l:6}\ref{l:6i}, 
and \eqref{e:cedweq} that 
$\cam (\ebase{\varphi}\ppersp s)\neq\emp$ and 
$\cam(\ehaute{\varphi}\ppersp s)\neq\emp$. 
Therefore we deduce from Lemma~\ref{l:9}\ref{l:9i} that
\begin{equation}
\label{e:2121iii}
\ebase{\varphi}\persp s=\big(\ebase{\varphi}\ppersp s\big)^{**}
\quad\text{and}\quad\ehaute{\varphi}\persp s=
\big(\ehaute{\varphi}\ppersp s\big)^{**}.
\end{equation}

\ref{t:55va0}: It follows from Theorem~\ref{t:7}\ref{t:7ivb} that
\begin{equation}
\label{e:851a}
\big(\varphi\ppersp s\big)^{**}(x,y)=
\max\Big\{\big(\ebase{\varphi}\ppersp s\big)^{**}(x,y),
\big(\ehaute{\varphi}\ppersp s\big)^{**}(x,y)\Big\}.
\end{equation}
Thus, the claim follows from \eqref{e:ppp} and \eqref{e:2121iii}.

\ref{t:55va}: According to \eqref{e:dc4} and
Lemma~\ref{l:12}\ref{l:12ii+},
$\dom\ebas{\varphi}=(\varphi^*)^{-1}(\RM)$. Hence, using
Theorem~\ref{t:7}\ref{t:7i}, Lemma~\ref{l:6}\ref{l:6i}, and 
\eqref{e:cedweq}, we arrive at 
$\dom (\ebase{\varphi}\ppersp s)^*\subset\dom 
\ebas{\varphi}\times\YY^*=(\varphi^*)^{-1}(\RM)\times\YY^*$. 
Therefore, it follows from \eqref{e:2121iii} that
\begin{align}
\label{e:1144i211}
\big(\ebase{\varphi}\persp s\big)(x,y)
&=\max\Bigg\{
\sup_{\substack{x^*\in (\varphi^*)^{-1}(\RMM)\\y^*\in\YY^*}}
\big(\pair{x}{x^*}+\pair{y}{y^*}-
(\ebase{\varphi}\ppersp s)^*(x^*,y^*)\big),\nonumber\\
&\hskip 16mm\sup_{\substack{x^*\in 
(\varphi^*)^{-1}(\{0\})\\y^*\in\YY^*}}
\big(\pair{x}{x^*}+\pair{y}{y^*}-
(\ebase{\varphi}\ppersp s)^*(x^*,y^*)\big)\Bigg\}.
\end{align}
On the one hand, Theorem~\ref{t:7}\ref{t:7i} applied to 
$\ebase{\varphi}$ and $s$, Lemma~\ref{l:12}\ref{l:12iii} 
applied to $\varphi^*$, Lemma~\ref{l:6}\ref{l:6i}, 
and Lemma~\ref{l:24}\ref{l:24ii+} applied to $s$ 
yield 
\begin{align}
\label{e:65362iiibis}
&\sup_{\substack{x^*\in(\varphi^*)^{-1}(\RMM)\\y^*\in\YY^*}}
\big(\pair{x}{x^*}+\pair{y}{y^*}-
(\ebase{\varphi}\ppersp s)^*(x^*,y^*)\big)\nonumber\\
&\hspace{20mm}=\sup_{\substack{x^*\in 
(\varphi^*)^{-1}(\RMM)\\y^*\in\YY^*}}
\Bigg(\pair{x}{x^*}+\pair{y}{y^*}+
\ebas{\varphi}(x^*)
\haute{s}\Bigg(\frac{y^*}
{-\ebas{\varphi}(x^*)}\Bigg)
\Bigg)\nonumber\\
&\hspace{20mm}=\sup_{x^*\in(\varphi^*)^{-1}(\RMM)}
\Bigg(\pair{x}{x^*}-\varphi^*(x^*)\sup_{y^*\in\YY^*}
\Bigg(\pair{y}{\frac{y^*}{-\varphi^*(x^*)}}-\haute{s}
\Bigg(\frac{y^*}{\varphi^*(x^*)}\Bigg)\Bigg)\Bigg)
\nonumber\\[3mm]
&\hspace{20mm}
=\sup_{x^*\in(\varphi^*)^{-1}(\RMM)}
\big(\pair{x}{x^*}-\varphi^*(x^*)\haut{s}(y)\big)\nonumber\\[3mm]
&\hspace{20mm}\geq\sup_{x^*\in(\varphi^*)^{-1}(\RMM)}
\big(\pair{x}{x^*}+\iota_{\cconv S}(y)\big).
\end{align}
On the other hand, with the help of Lemma~\ref{l:12}\ref{l:12v},
Theorem~\ref{t:7}\ref{t:7i} applied to $\ebase{\varphi}$ implies
that
\begin{align}
\label{e:11e}
&\sup_{\substack{x^*\in 
(\varphi^*)^{-1}(\{0\})\\y^*\in\YY^*}}
\big(\pair{x}{x^*}+\pair{y}{y^*}-
(\ebase{\varphi}\ppersp s)^*(x^*,y^*)\big)\nonumber\\
&\hspace{20mm}=\sup_{\substack{x^*\in 
(\varphi^*)^{-1}(\{0\})\\y^*\in\YY^*}}
\big(\pair{x}{x^*}+\pair{y}{y^*}-
\sigma_{\cconv S}(y^*)\big)\nonumber\\
&\hspace{20mm}=\sup_{\substack{x^*\in (\varphi^*)^{-1}(\{0\})}}
\big(\pair{x}{x^*}+\iota_{\cconv S}(y)\big).
\end{align}
Combining \eqref{e:1144i211}, \eqref{e:65362iiibis}, 
\eqref{e:11e}, Lemma~\ref{l:24}\ref{l:24ii++}, \eqref{e:cedweq},
and \ref{t:55iiia} we obtain 
\begin{align}
\big(\ebase{\varphi}\persp s\big)(x,y)&\geq 
\sigma_{(\varphi^*)^{-1}(\RM)}(x)+\iota_{\cconv S}(y)\nonumber\\
&\geq 
\sigma_{(\ehaut{\varphi})^{-1}(\{0\})}(x)+
\iota_{\cconv S}(y)\nonumber\\
&=\big(\ehaute{\varphi}\persp s\big)(x,y). 
\end{align}
Altogether, the result follows from \eqref{e:cedweq},
\ref{t:55va0}, and \ref{t:55i} applied to $\ebase{\varphi}$ and
$s$.

\ref{t:55vb}: 
Using Lemma~\ref{l:24}\ref{l:24ii+} and Lemma~\ref{l:14}, we 
partition $\YY$ as
\begin{multline}
\label{e:cl3}
\YY=
(\haut{s})^{-1}(\RPP)\bigcup\,\Big((\haut{s})^{-1}(\{0\})
\cap\big(\bas{(-s)})^{-1}(\RMM\big)\Big)\\
\bigcup\,
\Big((\haut{s})^{-1}(\{0\})\cap\big(\bas{(-s)})^{-1}(\{0\}\big)
\Big)
\bigcup\,(\haut{s})^{-1}(\{\pinf\}),
\end{multline}
which corresponds to the cases in \eqref{e:cl6}. Therefore, it
follows from \eqref{e:cl8},
Lemma~\ref{l:pjlaurent}\ref{l:pjlaurentii},
Lemma~\ref{l:12}\ref{l:12ii+}, Lemma~\ref{l:24}\ref{l:24iv},
Lemma~\ref{l:6}\ref{l:6i}, and 
\eqref{e:h5ys} that
\begin{align}
\label{e:recmax}
\max\left\{(\rec\ebase{\varphi})(x),
(\rec\ehaute{\varphi})(x)\right\}
&=\max\left\{\sigma_{\dom\ebas{\varphi}}(x),
\sigma_{\dom\ehaut{\varphi}}(x)\right\}\nonumber\\
&=\sigma_{\dom{\varphi}^*}(x)\nonumber\\
&=(\rec\breve{\varphi})(x).
\end{align}
Altogether, \eqref{e:cl6} follows from \ref{t:55va0}, by applying
\ref{t:55i} to $\ebase{\varphi}\persp s$ and \ref{t:55iiib} to
$\ehaute{\varphi}\persp s$, and invoking \eqref{e:cl3},
\eqref{e:cl8}, and \eqref{e:recmax}.
\end{proof}

Next, we focus on the case when $\varphi\in\Gamma_0(\XX)$ and 
$\pm s\in\Gamma_0(\YY)$. We express the results in terms of
recession functions via Lemma~\ref{l:12e}, which does not involve
the sign of $\varphi^*$.
 
\begin{corollary}
\label{c:305}
Let $\varphi\in\Gamma_0(\XX)$ and let
$s\colon\YY\to\RXX$ be such that $S=s^{-1}(\RPP)\neq\emp$.
Let $x\in\XX$ and $y\in\YY$. Then the following hold: 
\begin{enumerate}
\item
\label{c:305i} 
Suppose that $\varphi\geq\rec\varphi\neq\varphi$ and
$s\in\Gamma_0(\YY)$. Then
\begin{equation} 
\label{e:365}
(\varphi\persp s)(x,y)=
\begin{cases}
s(y){\varphi}
\bigg(\dfrac{x}{{s}(y)}\bigg),
&\text{if}\;\;0<s(y)<\pinf;\\
(\rec\varphi)(x),&\text{if}\;\;y\in\cconv 
S\:\:\text{and}\:\:s(y)\leq 0;\\
\pinf,&\text{otherwise}.
\end{cases}
\end{equation} 
\item
\label{c:305ii} 
Suppose that $\varphi=\rec\varphi$. Then
$(\varphi\persp s)(x,y)=\varphi(x)+\iota_{\cconv S}(y)$.
\item
\label{c:305iii} 
Suppose that $\varphi\neq\rec\varphi$, $\varphi(0)\leq 0$, 
and $-s\in\Gamma_0(\YY)$. Then 
\begin{equation}
\label{e:975}
(\varphi\persp s)(x,y)=
\begin{cases}
{s}(y){\varphi}\bigg(\dfrac{x}{s(y)}\bigg),
&\text{if}\;\;0<s(y)<\pinf;\\
(\rec\varphi)(x),&\text{if}\;\;s(y)=0;\\
\pinf,&\text{otherwise}.
\end{cases}
\end{equation}
\end{enumerate}
Furthermore, in each case, 
$\varphi\persp s\in\Gamma_0(\XX\oplus\YY)$. 
\end{corollary}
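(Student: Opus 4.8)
The plan is to obtain each of the three formulas by specializing Theorem~\ref{t:55}, after checking its hypotheses and rewriting the relevant $\blacktriangle$ or $\blacktriangledown$ envelope in terms of $s$ alone. Since $\varphi\in\Gamma_0(\XX)$, Lemma~\ref{l:6}\ref{l:6iv} gives at once that $\varphi$ is proper with $\cam\varphi\neq\emp$, that $\breve{\varphi}=\varphi$, and that $\varphi^*\in\Gamma_0(\XX^*)$, hence $\dom\varphi^*\neq\emp$; in particular $(\varphi^*)^{-1}(\RM)\supset(\varphi^*)^{-1}(\{0\})$, a containment I will use to verify the standing hypothesis~\eqref{e:55hyp}. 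Once that hypothesis is in force in a given case, the closing assertion $\varphi\persp s\in\Gamma_0(\XX\oplus\YY)$ is immediate from Theorem~\ref{t:55}\ref{t:55i-}, so the real content is the three formulas.

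For \ref{c:305i}, the assumption $\varphi\geq\rec\varphi$ gives $\varphi^*(\XX^*)\subset\RM\cup\{\pinf\}$ by Lemma~\ref{l:12e}, while $\rec\varphi\neq\varphi$ together with Lemma~\ref{l:pjlaurent}\ref{l:pjlaurentiv} forbids $\varphi^*(\dom\varphi^*)=\{0\}$; combining these, $(\varphi^*)^{-1}(\RMM)\neq\emp$, so \eqref{e:55hyp} holds and Theorem~\ref{t:55}\ref{t:55i} applies with $\breve\varphi=\varphi$. It then remains to rewrite \eqref{e:37a}, which is phrased through $\haut s$: since $s\in\Gamma_0(\YY)$ and $S\neq\emp$, Lemma~\ref{l:24}\ref{l:24ii} gives $\haut s=\max\{s,0\}+\iota_{\cconv S}$, whence $0<\haut s(y)<\pinf$ exactly when $0<s(y)<\pinf$ (the membership $y\in\cconv S$ being automatic there, as $S\subset\cconv S$), $\haut s(y)=0$ exactly when $y\in\cconv S$ and $s(y)\leq 0$, and $\haut s(y)=\pinf$ otherwise; feeding this into \eqref{e:37a}, together with $\haut s(y)=s(y)$ on $S$ and $\rec\breve\varphi=\rec\varphi$, produces \eqref{e:365}.

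For \ref{c:305ii}, Lemma~\ref{l:pjlaurent}\ref{l:pjlaurentiv} gives $\varphi^*(\dom\varphi^*)=\{0\}$, hence $\varphi^*(\XX^*)\subset\{0,\pinf\}$ and $(\varphi^*)^{-1}(\{0\})=\dom\varphi^*\neq\emp$, so \eqref{e:55hyp} holds; Theorem~\ref{t:55}\ref{t:55ii} then gives $\varphi\persp s=\rec\breve\varphi+\iota_{\cconv S}$, and $\rec\breve\varphi=\rec\varphi=\varphi$ finishes this case. For \ref{c:305iii}, evaluating the defining supremum of $\varphi^*$ at the origin gives $\varphi^*(x^*)\geq-\varphi(0)\geq 0$ for every $x^*\in\XX^*$, so $\varphi^*(\XX^*)\subset\RPX$; moreover $\varphi\neq\rec\varphi$, Lemma~\ref{l:pjlaurent}\ref{l:pjlaurentiv}, and $\varphi^*\geq 0$ force $(\varphi^*)^{-1}(\RPP)\neq\emp$, while $-s\in\Gamma_0(\YY)$ yields $\emp\neq\cam(-s)\subset\cam\pbas{(-s)}$ (a continuous affine minorant of $-s$ also minorizes $\pbas{(-s)}$, since $\pbas{(-s)}\geq -s$); hence we are in the setting of Theorem~\ref{t:55}\ref{t:55iiib}, which also secures \eqref{e:55hyp}. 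That item is phrased via $\bas{(-s)}$, so I would apply Lemma~\ref{l:12} to $-s$ (legitimate since $(-s)^{-1}(\RMM)=S\neq\emp$): parts \ref{l:12ii} and \ref{l:12iii} give $\bas{(-s)}(y)=-s(y)$ for $y\in S$, $\bas{(-s)}(y)=0$ iff $s(y)=0$, and $\bas{(-s)}(y)=\pinf$ otherwise (i.e.\ when $s(y)<0$ or $s(y)=\pinf$); substituting this and $\breve\varphi=\varphi$ into \eqref{e:97a} gives \eqref{e:975}.

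The hypothesis checks are routine bookkeeping; the one step that needs a little care is the final rewriting in \ref{c:305i} and \ref{c:305iii}, where the trichotomy on the envelope value ($\in\RPP$, $=0$, or $=\pinf$) must be matched against the case split written directly in terms of $s$. In particular, in \ref{c:305i} one must observe that the branch ``$0<s(y)<\pinf$'' carries no extra membership constraint because $S\subset\cconv S$, and that the ``otherwise'' clauses in \eqref{e:365} and \eqref{e:975} correctly absorb the value $s(y)=\pinf$ (and, in \eqref{e:365}, the points $y\notin\cconv S$).
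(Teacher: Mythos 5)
Your proposal is correct and follows essentially the same route as the paper: verify the sign conditions on $\varphi^*$ via Lemma~\ref{l:12e} and Lemma~\ref{l:pjlaurent}\ref{l:pjlaurentiv}, check \eqref{e:55hyp}, then specialize the corresponding item of Theorem~\ref{t:55}, using Lemma~\ref{l:24} applied to $s$ in case \ref{c:305i} and Lemma~\ref{l:12} applied to $-s$ in case \ref{c:305iii} to translate the envelope trichotomies back into conditions on $s$. The only difference is that you spell out the envelope-to-$s$ case matching explicitly, which the paper leaves implicit.
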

\begin{proof} 
We first observe that Lemma~\ref{l:6}\ref{l:6iv} yields
$\breve{\varphi}=\varphi$. Furthermore, by 
Lemma~\ref{l:pjlaurent}\ref{l:pjlaurentiv}, we have
\begin{equation}
\label{e:g6}
\varphi=\rec\varphi\quad\Leftrightarrow\quad 
\varphi^*(\XX^*)\subset\{0,\pinf\}.
\end{equation}

\ref{c:305i}: 
Lemma~\ref{l:12e} and \eqref{e:g6} yield
\begin{equation}
\label{e:k4}
\dom\varphi^*=(\varphi^*)^{-1}(\RM) \quad 
\text{and} \quad (\varphi^*)^{-1}(\RMM)\neq \emp.
\end{equation}
Hence, \eqref{e:365} follows from Theorem~\ref{t:55}\ref{t:55i} and
Lemma~\ref{l:24} applied to $s$. 

\ref{c:305ii}: 
This assertion follows from \eqref{e:g6} and 
Theorem~\ref{t:55}\ref{t:55ii}.

\ref{c:305iii}: 
We have $(\forall x^*\in\XX^*)$ $\varphi^{*}(x^*)\geq
\pair{0}{x^*}-\varphi(0)\geq 0$. Thus, \eqref{e:g6} yields
\begin{equation}
\label{e:k6}
\dom\varphi^*=(\varphi^*)^{-1}(\RP) 
\quad \text{and} \quad (\varphi^*)^{-1}(\RPP)\neq \emp.
\end{equation}
Thus, since $\cam\pbas{(-s)}\neq\emp$ by 
Lemma~\ref{l:6}\ref{l:6iv}, \eqref{e:975} follows from
Theorem~\ref{t:55}\ref{t:55iiib} and Lemma~\ref{l:12} applied to
$-s$. 

Finally, since \eqref{e:55hyp} holds in each case, we deduce from
Theorem~\ref{t:55}\ref{t:55i-} that 
$\varphi\persp s\in\Gamma_0(\XX\oplus\YY)$.
\end{proof}

\begin{remark}
\label{r:z}
As mentioned in the Introduction, in the context of
Corollary~\ref{c:305}, alternative notions of
perspective functions with nonlinear scaling were proposed in
\cite{Mar05a,Zali08} under additional restrictions on the scaling
function. Specically, these papers deal with 
operations {\footnotesize$\Delta_{\text{1}}$}
and {\footnotesize$\Delta_{\text{2}}$} between functions 
$\varphi\in\Gamma_0(\XX)$ and $\psi\in\Gamma_0(\YY)$ in the
following scenarios.
\begin{enumerate}
\item
\label{r:zi}
Suppose that $\varphi\geq\rec\varphi\neq\varphi$
and $\psi(\dom\psi)\subset\RP$. In view of \eqref{e:0},
\begin{equation} 
\label{e:tI}
\varphi{\,\mbox{\footnotesize$\Delta_{\text{2}}$}\,}\psi
\colon\XX\oplus\YY\to\RX
\colon (x,y)\mapsto 
\begin{cases}
\widetilde{\varphi}\big(x,\psi(y)\big)&\text{if}\:\:y
\in\dom \psi;\\
\pinf,&\text{if}\:\:y\notin\dom \psi.
\end{cases}
\end{equation} 
Now suppose that $\psi^{-1}(\RPP)\neq\emp$.
It follows from Corollary~\ref{c:305}\ref{c:305i} that
\begin{equation}
\label{e:cz4}
\varphi{\,\mbox{\footnotesize$\Delta_{\text{2}}$}\,}\psi\leq
\varphi\persp \psi\colon (x,y)\mapsto
(\varphi{\mbox{\,\footnotesize$\Delta_{\text{2}}$\,}}\psi)(x,y)
+\iota_{\cconv \psi^{-1}(\RPP)}(y).
\end{equation}
Let us note that, since equality fails above, the 
$\varphi{\,\mbox{\footnotesize$\Delta_{\text{2}}$}\,}\psi $ is not
the largest minorant of $\varphi\ppersp \psi$ in
$\Gamma_0(\XX\oplus\YY)$. For instance, suppose that 
\begin{equation}
\label{e:cz5}
\YY=\RR\quad\text{and}\quad\psi\colon y\mapsto\max\{0,y\}. 
\end{equation}
Then $\cconv\psi^{-1}(\RPP)=\RP$ and therefore,
if $y\in\RMM$ and $0\in\dom\varphi$, we have $\psi(y)=0$ and
$0=(\varphi{\,\mbox{\footnotesize$\Delta_{\text{2}}$}\,}\psi)(0,y)
<(\varphi\persp \psi)(0,y)=\pinf$.
\item
\label{r:ziii}
Suppose that $\varphi\neq\rec\varphi$, $\varphi(0)\leq 0$, 
and $\psi(\dom\psi)\subset\RM$. In 
view of \eqref{e:0}, 
\begin{equation}
\label{e:tII}
\varphi{\,\mbox{\footnotesize$\Delta_{\text{1}}$}\,}\psi
\colon\XX\oplus\YY\to\RX\colon (x,y)\mapsto 
\begin{cases}
\widetilde{\varphi}\big(x,-\psi(y)\big),&\text{if}\:\:
y\in\dom \psi;\\ 
\pinf,&\text{if}\:\:y\notin\dom \psi.
\end{cases}
\end{equation} 
Now suppose that $\psi^{-1}(\RMM)\neq\emp$. Then it follows from 
Corollary~\ref{c:305}\ref{c:305iii} that 
$\varphi{\mbox{\,\footnotesize$\Delta_{\text{1}}$\,}}
\psi=\varphi\persp (-\psi)$. In turn,
Definition~\ref{d:persp} asserts that, in this particular scenario,
$\varphi{\mbox{\,\footnotesize$\Delta_{\text{1}}$\,}}\psi$ is the 
largest minorant of $\varphi\ppersp (-\psi)$ in
$\Gamma_0(\XX\oplus\YY)$.
\end{enumerate}
The construction proposed in
Definition~\ref{d:persp} covers a much broader range of functions
$(\varphi,s)$ that those employed above. Concrete instances will be
presented in Section~\ref{sec:6}.
\end{remark}

\begin{remark}
\label{r:55} 
Let $\varphi\in\Gamma_0(\XX)$ and let $s\colon\YY\to\RXX$ be such
that $s^{-1}(\RPP)\neq\emp$. The above remark reveals some
particular instances in which $\varphi\persp s$ can be expressed in
terms of the classical perspective of \eqref{e:0} applied to
certain transformations of $\varphi$ and $s$. Let us clarify these
identities and, in particular, address the natural question that
arises as to the validity of the identity 
\begin{equation} 
\label{e:t7}
\big(\varphi\persp s\big)(x,y)=
\begin{cases}
\widetilde{\varphi}\big(x,s(y)\big)
&\text{if}\:\:s(y)\in\RR;\\
\pinf,&\text{otherwise}
\end{cases}
\end{equation} 
beyond the classical case already discussed in Section~\ref{sec:1}
in which $\YY=\RR$ and $s\colon y\mapsto y$. It turns out that
\eqref{e:t7} is true only in very specific instances, some of which
are provided below. Let $x\in\XX$ and $y\in\YY$. Then it follows
from Theorem~\ref{t:55} that the following hold: 
\begin{enumerate}
\item 
\label{r:55i} 
Suppose that $\varphi^*(\XX^*)\subset\RM\cup\{\pinf\}$. Then 
\begin{equation} 
\label{e:37ar}
\big(\varphi\persp s\big)(x,y)=
\begin{cases}
\widetilde{{\varphi}}\big(x,\haut{s}(y)\big),
&\text{if}\:\:y\in\dom\haut{s};\\
\pinf,&\text{if}\:\: y\notin\dom\haut{s}.
\end{cases}
\end{equation} 
If we assume additionally that $s=\haut{s}$, then
it follows from Lemma~\ref{l:12e} that \eqref{e:t7} holds. This
corresponds to the setting of Remark~\ref{r:z}\ref{r:zi}.
\item 
\label{r:55ii} 
Suppose that $\varphi^*(\XX^*)\subset\RPX$, 
$(\varphi^*)^{-1}(\RPP)\neq\emp$,
and $\cam\pbas{(-s)}\neq\emp$. Then
\begin{align}
\label{e:97ar}
\big(\varphi\persp s\big)(x,y)&=
\begin{cases}
\widetilde{{\varphi}}\big(x,-\bas{(-s)}(y)\big),
&\text{if}\:\: y\in\dom\bas{(-s)};\\
\pinf,&\text{if}\:\: y\notin\dom\bas{(-s)}.
\end{cases}
\end{align}
If we assume additionally that 
$s=-\bas{(-s)}$,
then \eqref{e:t7} holds. This corresponds to the setting of
Remark~\ref{r:z}\ref{r:ziii}.
\item 
Suppose that $w^*\in\YY^*\smallsetminus\{0\}$, 
$\overline{y}\in\YY$, and $s=w^*-\pair{\overline{y}}{w^*}$. Then
Example~\ref{ex:307} implies that \eqref{e:t7} holds. 
\item 
\label{r:55iii} 
Suppose that $(\varphi^*)^{-1}(\RMM)\neq\emp$ and that
$(\varphi^*)^{-1}(\RPP)\neq\emp$. Then 
\begin{multline}
\label{e:cl6r}
\hspace{-1.1cm}\big(\varphi\persp s\big)(x,y)=
\begin{cases}
\max\big\{\widetilde{\ebase{\varphi}}\big(x,\haut{s}(y)\big),
\widetilde{\ehaute{\varphi}}\big(x,-\bas{(-s)}(y)\big)\big\},
&\text{if}\:\:y\in\dom\haut{s};\\
\pinf,&\text{if}\:\:y\notin\dom\haut{s}.
\end{cases}
\end{multline}
If $s=\haut{s}=-\bas{(-s)}$, it follows
from Remark~\ref{r:0} that \eqref{e:t7} holds.
\end{enumerate}
\end{remark}

\section{Examples and applications}
\label{sec:6}

We illustrate various cases that arise in Theorem~\ref{t:55}.

\begin{example}
\label{ex:z1}
Suppose that $\XX$ is a nonzero real reflexive Banach space, let 
$\alpha\in\RPP$, let $p\in\left]1,\pinf\right[$, set 
$\petoile=p/(p-1)$, and set 
\begin{equation}
\label{e:def_varphi_1}
\varphi_1\colon\XX\to\RR\colon x\mapsto 
\begin{cases}
\alpha\|x\|,&\;\;\text{if}\;\;\|x\|>\alpha^{\frac{1}{p-1}};\\[2mm]
\dfrac{\|x\|^p}{p}+\dfrac{\alpha^{\petoile}}{\petoile},&\;\;
\text{if}\;\;\|x\|\leq\alpha^{\frac{1}{p-1}}.
\end{cases}
\end{equation}
Suppose that $\YY=\RR$, let $\beta\in\lzeroun$, and set
\begin{equation}
\label{e:s1}
s\colon\RR\to\RX\colon y\mapsto 
\begin{cases}
y-\dfrac{\beta^2+1}{2},&\;\;\text{if}\;\;y>1;\\[2mm]
\dfrac{|y|^2-\beta^2}{2},
&\;\;\text{if}\;-1\leq y\leq 1;\\
\pinf,&\;\;\text{if}\;\;y<-1.
\end{cases}
\end{equation}
It follows from Example~\ref{ex:normp} that
$\varphi_1^*(\XX^*)\subset\RM\cup\{\pinf\}$ and 
$(\varphi_1^*)^{-1}(\RMM)\neq\emp$. Furthermore,
Lemma~\ref{l:24}\ref{l:24ii} yields
\begin{equation}
\label{e:defsextriangle}
\haut{s}\colon y\mapsto 
\begin{cases}
y-\dfrac{\beta^2+1}{2},&\;\;\text{if}\;\;y>1;\\[3mm]
\dfrac{|y|^2-\beta^2}{2},&\;\;\text{if}\;\;
-1<y\leq -\beta \:\:\text{or}\:\:\beta<y\leq 1;\\[1mm]
0,&\;\;\text{if}\;\;-\beta\leq y\leq \beta;\\
\pinf,&\;\;\text{if}\;\;\,y<-1.
\end{cases}
\end{equation}
We thus derive $\varphi_1\persp s$ from
Theorem~\ref{t:55}\ref{t:55i}; see Figure~\ref{fig:exz}.
\end{example}

\begin{example}
\label{ex:z2}
Suppose that $\XX$ is a nonzero real
reflexive Banach space, let $\alpha\in\RPP$, let
$p\in\left]1,\pinf\right[$, set $\petoile=p/(p-1)$, and set 
\begin{equation}
\label{e:def_varphi_2}
\varphi_2\colon\XX\to\RR\colon x\mapsto 
\begin{cases}
\dfrac{\|x\|^p}{p}+\dfrac{\alpha^{\petoile}}{\petoile},&
\text{if}\;\;\|x\|>\alpha^{\frac{1}{p-1}};\\[5pt]
\alpha\|x\|,&\text{if}\;\;\|x\|\leq \alpha^{\frac{1}{p-1}}.
\end{cases}
\end{equation}
Let $\YY$ and $s$ be as in Example~\ref{ex:z1}. 
In view of Example~\ref{ex:normp}, we have 
$\varphi_2^*(\XX^*)\subset\RP$ 
and $(\varphi_2^*)^{-1}(\RPP)\neq\emp$. 
Additionally,
$\cam\pbas{(-s)}\neq\emp$ and
\eqref{e:bas} yields
\begin{equation}
\label{e:defsextrianglebas}
-\bas{(-s)}\colon y\mapsto 
\begin{cases}
y+\dfrac{3-\beta^2}{2},&\;\;\text{if}\;\; y\geq -1;\\
\minf,&\:\:\text{if}\;\;y<-1.
\end{cases}
\end{equation}
We thus derive $\varphi_2\persp s$ from
Theorem~\ref{t:55}\ref{t:55iiib}; see Figure~\ref{fig:exz}.
\end{example}

\begin{example}
\label{ex:z3}
Suppose that $\XX$ is a nonzero real reflexive Banach space, let 
$\alpha\in\RPP$, let $p\in\left]1,\pinf\right[$, set 
$\petoile=p/(p-1)$, and set 
\begin{equation}
\label{e:def_varphi_3}
\varphi_3\colon\XX\to\RR\colon x\mapsto 
{\|x\|^p}/{p}+{\alpha^{\petoile}}/{\petoile}.
\end{equation}
Let $\YY$ and $s$ be as in Example~\ref{ex:z1}. 
Then, as seen in Example~\ref{ex:normp}, 
$(\varphi_3^*)^{-1}(\RMM)\neq\emp$, 
$(\varphi_3^*)^{-1}(\RPP)\neq\emp$, and it follows from
\eqref{e:r8}, \eqref{e:def_varphi_1}, and \eqref{e:def_varphi_2}
that $\ebase{\varphi_3}=\varphi_1$ and
$\ehaute{\varphi_3}=\varphi_2$. 
Hence, we derive $\varphi_3\persp s$ from
Theorem~\ref{t:55}\ref{t:55va0}; see Figure~\ref{fig:exz}.
\end{example}

\begin{figure}
\begin{subfigure}{.5\textwidth}
  \centering
  \includegraphics[width=.8\linewidth]{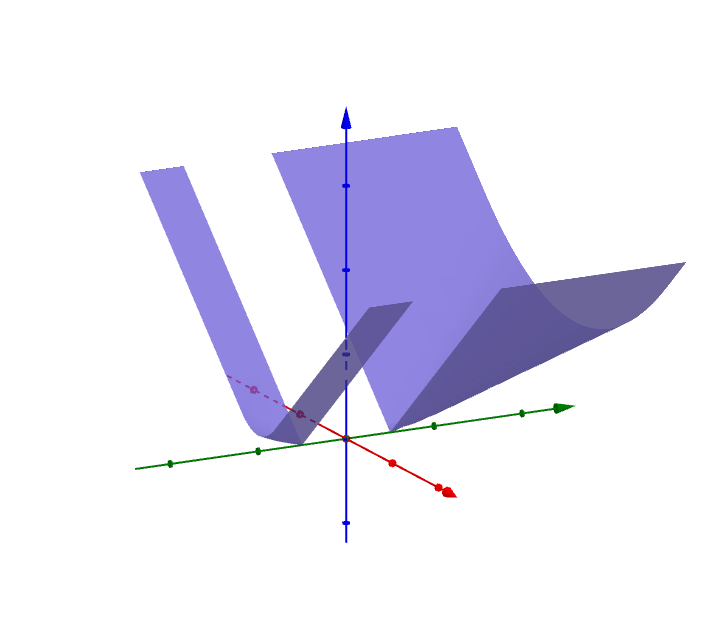}  
\vskip -9mm
 \caption{$\varphi_1\ppersp s$.}
  \label{fig:sub-first}
\end{subfigure}
\begin{subfigure}{.5\textwidth}
  \centering
  \includegraphics[width=.8\linewidth]{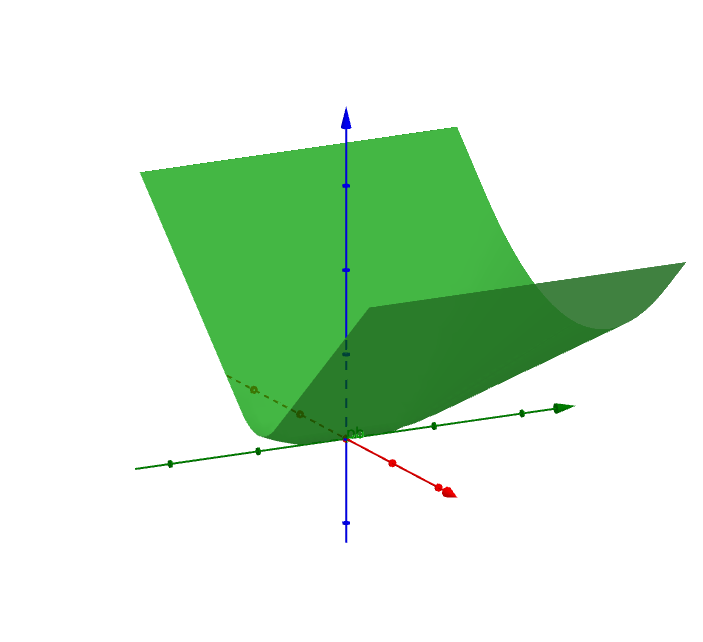}  
\vskip -9mm
  \caption{$\varphi_1\persp s$.}
  \label{fig:sub-second}
\end{subfigure}

\begin{subfigure}{.5\textwidth}
  \centering
  \includegraphics[width=.8\linewidth]{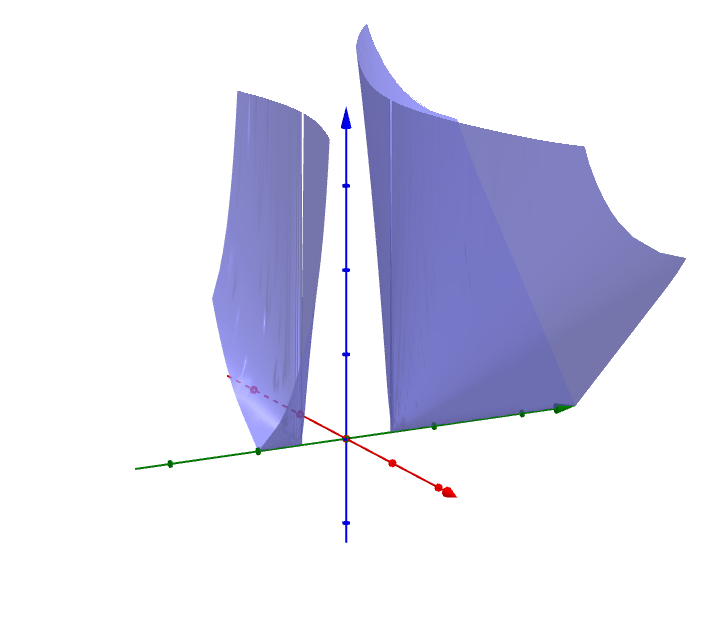}  
\vskip -9mm
  \caption{$\varphi_2\ppersp s$.}
  \label{fig:sub-third}
\end{subfigure}
\begin{subfigure}{.5\textwidth}
  \centering
  \includegraphics[width=.8\linewidth]{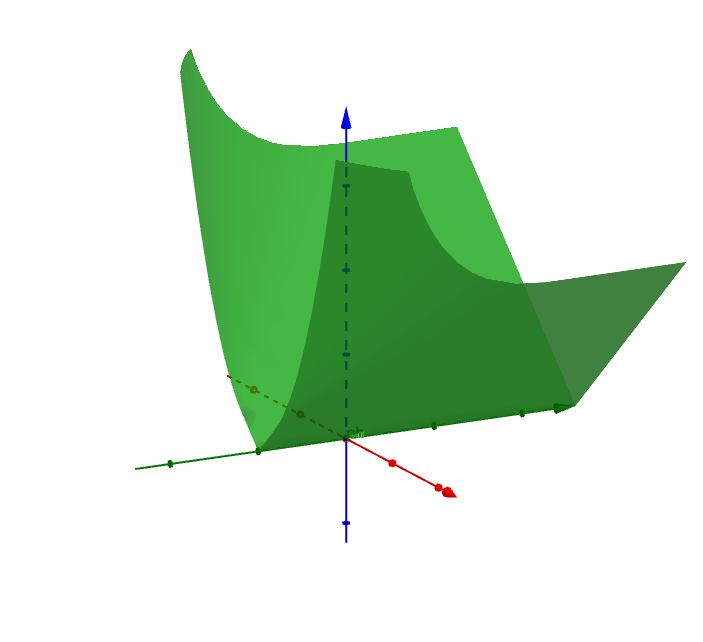}  
\vskip -9mm
  \caption{$\varphi_2\persp s$.}
  \label{fig:sub-fourth}
\end{subfigure}

\begin{subfigure}{.5\textwidth}
  \centering
  \includegraphics[width=.8\linewidth]{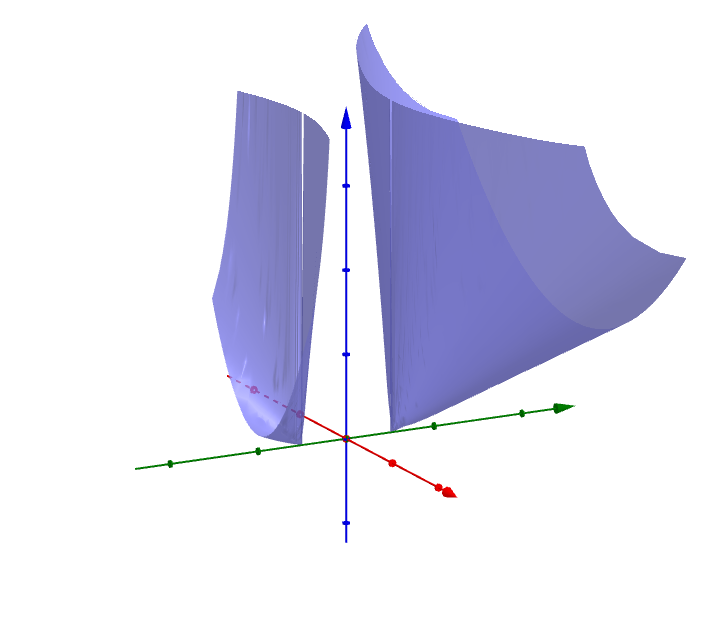}  
\vskip -9mm
  \caption{$\varphi_3\ppersp s$.}
  \label{fig:sub-fifth}
\end{subfigure}
\begin{subfigure}{.5\textwidth}
  \centering
  \includegraphics[width=.8\linewidth]{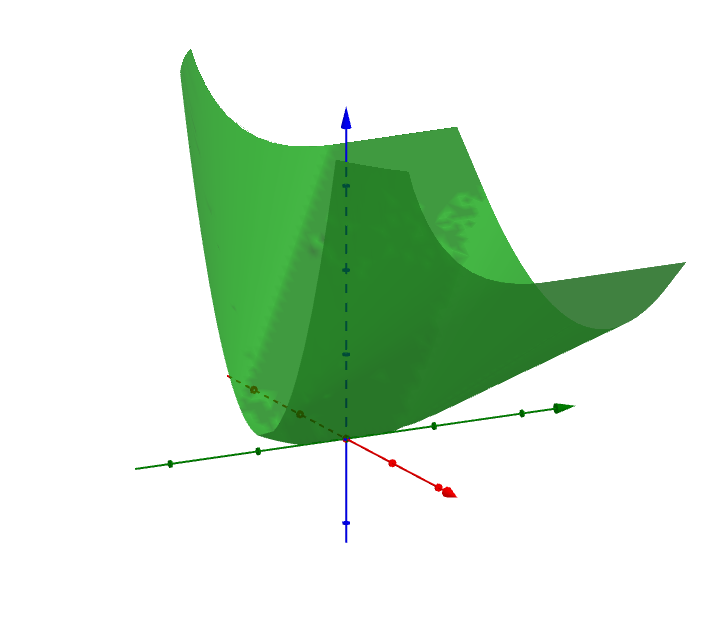}  
\vskip -9mm
  \caption{$\varphi_3\persp s$.}
  \label{fig:sub-sixth}
\end{subfigure}
\vskip 9mm

\caption{Plots of $\varphi_i\ppersp s$ (left) and 
$\varphi_i\persp s$ (right) for $i\in\{1,2,3\}$ in
Examples~\ref{ex:z1}-\ref{ex:z3} with $p=2$, $\alpha=1$, and
$\beta=1/2$. The $x$-axis is in red and the $y$-axis in green.}
\label{fig:exz}
\end{figure}

\begin{example} 
\label{ex:qneq1}
Let $\XX$ and $\varphi_3$ be as in Example~\ref{ex:z3},
let $\varphi_1$ be as in Example~\ref{ex:z1}, and
let $\varphi_2$ be as in Example~\ref{ex:z2}.
Recall that $(\varphi_3^*)^{-1}(\RMM)\neq\emp$, 
$(\varphi_3^*)^{-1}(\RPP)\neq\emp$,
$\ebase{\varphi_3}=\varphi_1$, and
$\ehaute{\varphi_3}=\varphi_2$.
Suppose that $\YY=\RR$, let $1\neq q\in\RPP$, and set 
\begin{equation}
\label{ex:puissance_q}
s\colon\RR\to\RX\colon y\mapsto\begin{cases}
y^q,&\text{if} \;\: y\geq 0; \\
\pinf,&\text{if} \;\: y<0.
\end{cases}
\end{equation}
Since $\cam\pbas{(-s)}=\emp$ for $q>1$, it follows from
\eqref{e:bas}, \eqref{e:haut}, Lemma~\ref{l:12}\ref{l:12ii}, and 
Lemma~\ref{l:24}\ref{l:24ii} that
\begin{equation}
\haut{s}\colon y\mapsto 
\begin{cases}
0,&\text{if}\:\: y\geq 0\:\:\text{and}\:\:q<1;\\
y^q,&\text{if}\:\: y\geq 0\:\:\text{and}\:\: q>1;\\
\pinf,&\text{if}\:\: y<0
\end{cases}\quad \text{and}\quad 
-\bas{(-s)}\colon y\mapsto 
\begin{cases}
y^q,&\text{if}\:\: y\geq 0\:\:\text{and}\:\:q<1;\\
\minf,&\text{if}\:\: y<0\:\:\text{and}\:\:q<1;\\
\pinf,&\text{if}\:\: q>1.
\end{cases}
\end{equation}
Hence, we derive $\varphi_3\persp s$ from
Theorem~\ref{t:55}\ref{t:55vb} for $q<1$, and from
Theorem~\ref{t:55}\ref{t:55va} for $q>1$ (see
Figure~\ref{fig:ex641}).
\end{example}

\begin{figure}
\begin{subfigure}{.5\textwidth}
  \centering
  \includegraphics[width=.7\linewidth]{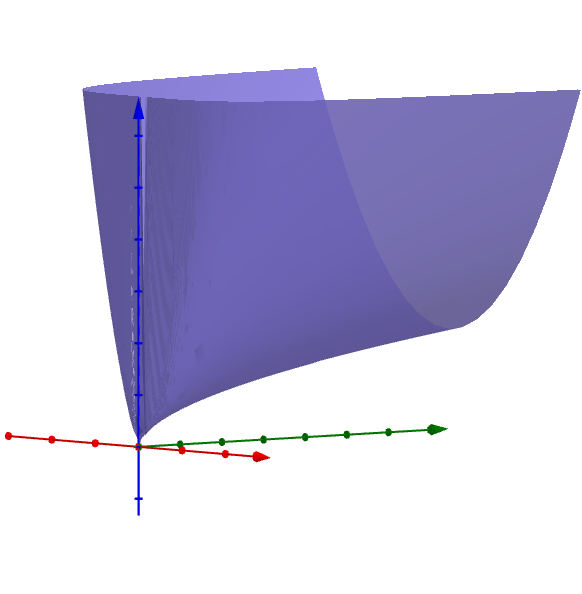}  
\vskip -9mm
 \caption{$\varphi_3\ppersp s$ with $p=1/q=2$.}
  \label{fig:ex640pp}
\end{subfigure}
\begin{subfigure}{.5\textwidth}
  \centering
  \includegraphics[width=.7\linewidth]{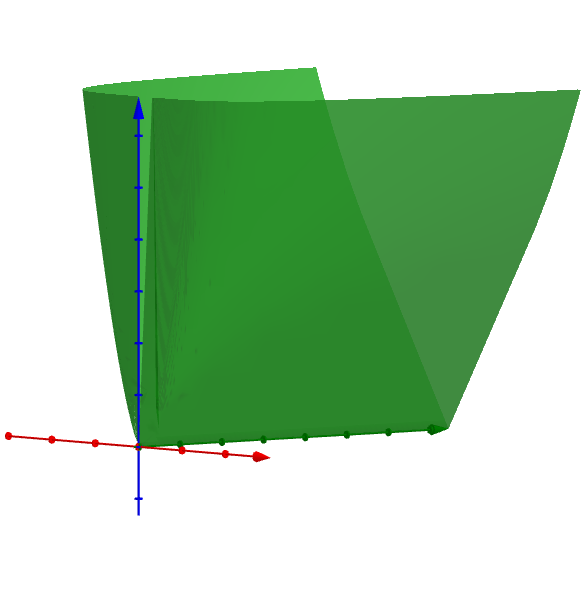}  
\vskip -9mm
  \caption{$\varphi_3\persp s$ with $p=1/q=2$.}
  \label{fig:ex640p}
\end{subfigure}
\vskip 19mm
\begin{subfigure}{.5\textwidth}
  \centering
  \includegraphics[width=.7\linewidth]{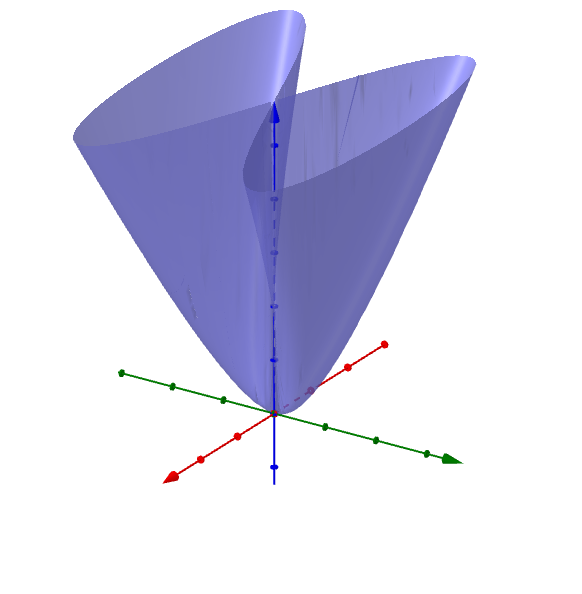}  
\vskip -9mm
  \caption{$\varphi_3\ppersp s$ with $p=q=2$.}
  \label{fig:ex641pp}
\end{subfigure}
\begin{subfigure}{.5\textwidth}
  \centering
  \includegraphics[width=.7\linewidth]{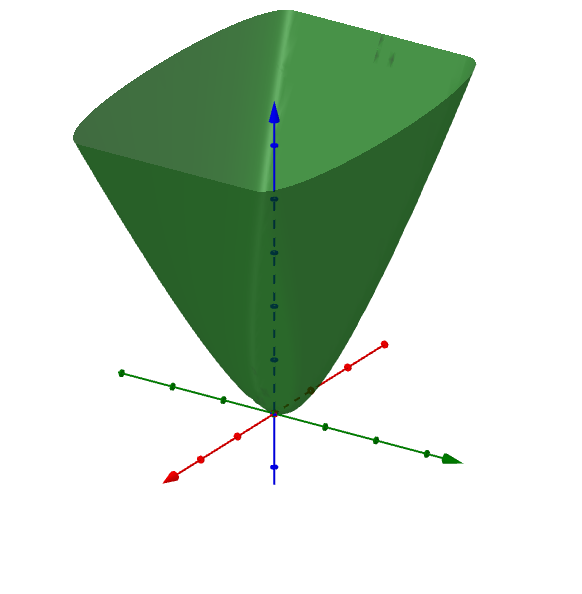}  
\vskip -9mm
  \caption{$\varphi_3\persp s$ with $p=q=2$.}
  \label{fig:ex641p}
\end{subfigure}
\vskip 9mm

\caption{Plots of $\varphi_3\ppersp s$ (left) and 
$\varphi_3\persp s$ (right) in Example~\ref{ex:qneq1}. 
The $x$-axis is in red and the $y$-axis in green.}
\label{fig:ex641}
\end{figure}

We now turn our attention to specific applications by considering
integral functions of the form
\begin{equation}
(\mathsf{x},\mathsf{y})\mapsto\displaystyle{\int_\Omega}
\big(\varphi_{\omega}\persp s_{\omega}\big)
\big(\mathsf{x}(\omega),\mathsf{y}(\omega)\big)\mu(d\omega),
\end{equation}
where the integrand is a perspective function with nonlinear
scaling in the sense of Definition~\ref{d:persp}.

\begin{example}
\label{ex:MFG}
Let $p\in\left]1,\pinf\right[$ and $q\in\rzeroun$. Suppose 
that $\XX=\RR^N$ is normed with $\|\cdot\|$, $\YY=\RR$, 
$\varphi\colon\XX\to\RX\colon x\mapsto\|x\|^p/p$, and
\begin{equation}
\label{e:v}
s\colon\YY\to\RXX\colon y\mapsto 
\begin{cases}
y^q,&\text{if}\:\:y\geq 0;\\
\minf,&\text{if}\:\:y<0.
\end{cases}
\end{equation}
Let $T\in\RPP$, set $\UU=(L^1 ([0,T]\times\RR^d))^N$, set 
$\VV=L^1([0,T]\times\RR^d)$, and consider the integral function 
\begin{equation}
\label{e:defJ}
\Phi\colon\UU\oplus\VV\to\RX\colon
(m,\varrho)\mapsto\displaystyle{\int_{0}^{T}\int_{\RR^d} 
(\varphi\persp s)
\big(m(t,\xi),\varrho(t,\xi)\big)dtd\xi}.
\end{equation}
In optimal mass transportation theory, $m$ and $\varrho$ represent
the momentum and the density of particles, respectively, and
$m/\varrho$ represents their velocity \cite{Bren00,Vill03}. In the
case when $p=2$ and $q=1$, $\varphi\persp s$ is a classical
perspective (see \eqref{e:0}) and the function \eqref{e:defJ}
is related to the dynamical formulation of the $2$-Wasserstein
distance \cite{Bren00,Vill03}. Based on this formulation, convex
optimization methods are proposed in
\cite{Bena16,Carr22} to approximate the iterates
of the so-called JKO scheme \cite{Jord98} for gradient
flows in the space of probability measures. When $q\neq 1$,
\eqref{e:defJ} appears in optimal transportation based on
$p$-Wasserstein distances with nonlinear mobilities 
\cite{Card13,Carr10,Dolb09}
and in the optimal control of McKean--Vlasov systems with
congestion \cite{Ach16a}. Space-dependent potentials
$(\varphi_{\xi})_{\xi\in\Xi}$, where $\Xi\subset\RR^d$, are also 
found \cite{Bena15,Bric18,Carda15}, where they lead to
functions of the form 
\begin{equation}
\label{e:defJ2}
\Psi\colon\UU\oplus\VV\to\RX\colon
(m,\varrho)\mapsto\displaystyle{\int_{0}^{T}\int_{\Xi} 
(\varphi_\xi\persp s)
\big(m(t,\xi),\varrho(t,\xi)\big)dtd\xi}.
\end{equation}
Theorem~\ref{t:55} provides conditions under which
$(\varphi_{\xi}\persp s)_{\xi\in\Xi}$ is a family of functions in
$\Gamma_0(\XX\oplus\YY)$. Note that in
\cite{Bena15,Bric18,Carda15}, $q=1$ and we are therefore dealing
with classical perspectives (see Example~\ref{ex:307}). Our
nonlinear setting allows us to employ \eqref{e:defJ2} with 
$q<1$ and more structured 
space-dependent potentials. For instance, in the context of 
optimal transport theory, consider
\begin{equation}
(\forall\xi\in\Xi)\quad
\varphi_{\xi}\colon\RR^N\to\RX\colon x\mapsto
\|x\|^p/p+\iota_{C(\xi)}(\|x\|)+h(\xi),
\end{equation}
where $C(\xi)\subset\RP$ is an interval representing a constraint
on the speed of particles located at $\xi$ and $h$ is a spatial
penalization term.
For every $\xi\in\Xi$ such that $\inf C(\xi)>0$, we have 
$(\varphi_{\xi}^*)^{-1}(\RPP)\neq\emp$ and
$(\varphi_{\xi}^*)^{-1}(\RMM)\neq\emp$, and Theorem~\ref{t:55}
is needed to compute $\varphi_\xi\persp s$. An illustration is
provided in Figure~\ref{fig:MFG}.
Another type of scaling function in \eqref{e:defJ} is proposed in 
\cite{Bren02}, namely the concave function
\begin{equation}
s\colon\YY\to\RXX\colon {y}\mapsto 
\begin{cases}
\dfrac{y(1-y)}{\alpha(1-y)+\beta y},
&\text{if}\:\:{y}\in[0,1];\\[5mm]
\minf,&\text{otherwise},
\end{cases}
\end{equation}
where $(\alpha,\beta)\in\RPP^2$.
\end{example}

\begin{figure}
\begin{center}
\includegraphics[scale=0.3]{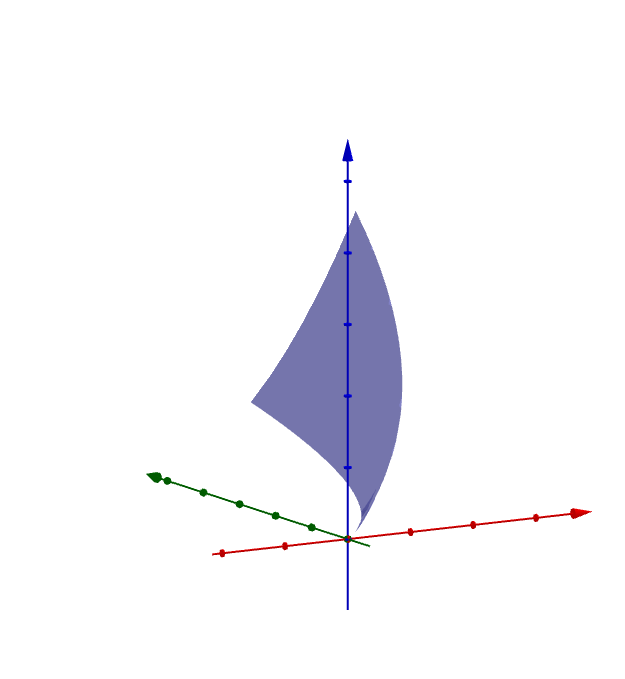}
\includegraphics[scale=0.3]{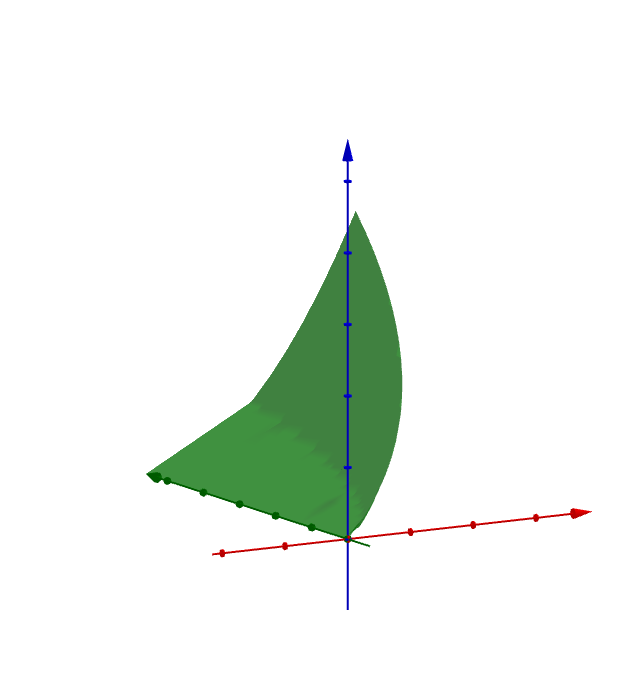}
\end{center}
\caption{Plot of $\varphi\ppersp s$ (left) and $\varphi\persp s$
(right) in Example~\ref{ex:MFG} for 
$\XX=\YY=\RR$, $\varphi=|\cdot|^2/2+\iota_{[1,2]}$, 
and $s\colon y\mapsto\sqrt{y}$ if $y\geq 0$.
The $x$-axis is in red and the $y$-axis in green.}
\label{fig:MFG}
\end{figure}

\begin{example} 
Let $U$ be a finite set, suppose that $\XX=\RR$ and $\YY=\RR^2$,
let $s\colon\YY\to[\minf,\pinf]$, and, for every $(u_1,u_2)\in
U^2$, let $\varphi_{u_1,u_2}\in\Gamma_0(\XX)$. Furthermore, set
$\UU=L^{2}([0,1];\RR^{U\times U})$, $\VV=L^1([0,1];\RR^{U})$, and 
\begin{align}
\label{e:maasgen}
\Phi\colon\UU\oplus\VV&\to\RX\nonumber\\
(m,\varrho)&\mapsto\displaystyle{\int_{0}^{1}} 
\sum_{u_1\in U}\sum_{u_2\in U}\big(\varphi_{u_1,u_2}\persp s\big)
\big(m(t,u_1,u_2),\varrho(t,u_1),
\varrho(t,u_2)\big)dt.
\end{align}
Theorem~\ref{t:55}
provides conditions under which, for every $(u_1,u_2)\in U^2$,
$\varphi_{u_1,u_2}\persp s\in \Gamma_0(\XX\oplus\YY)$.
In the particular case when, for every 
$(u_1,u_2)\in U^2$, 
$\varphi_{u_1,u_2}=K(u_1,u_2)\pi(u_1)\varphi$, 
where $\varphi\colon\XX\to\RX\colon x\mapsto |x|^2/2$,
$K\colon U\times U\to\RR$ is an
irreducible and reversible Markov kernel on $U$, and
$\pi\colon U\to\RR$ is the associated stationary distribution,
\eqref{e:maasgen} reduces to
\begin{align}
\Phi\colon (m,\varrho)\mapsto\displaystyle{\int_{0}^{1}} 
\sum_{u_1\in U}\sum_{u_2\in U}
\big(\varphi \persp s\big)
\big(m(t,u_1,u_2),\varrho(t,u_1),
\varrho(t,u_2)\big)
K(u_1,u_2)\pi(u_1) dt,
\end{align}
which appears in \cite{Maas11}. Under some additional conditions on
$s$, satisfied for instance by the logarithmic mean 
\begin{equation}
s\colon(y_1,y_2)\mapsto
\begin{cases} 0, & \text{if}\; 
(y_1,y_2)\in(\{0\}\times\RP)\cup(\RPP\times\{0\});\\
y_1, & \text{if}\; y_1=y_2\in\RPP;\\
\dfrac{y_2-y_1}{\log(y_2)-\log(y_1)}, & \text{if}\; 
(y_1,y_2)\in\RPP\times\RPP\;\text{and}\;
y_1\neq y_2;\\
\minf, & \text{otherwise,}
\end{cases} 
\end{equation}
and by the geometric mean 
\begin{equation}
s\colon (y_1,y_2)\mapsto
\begin{cases} 
\sqrt{y_1y_2}, &\text{if}\; (y_1,y_2)\in\RP\times\RP; \\
\minf, & \text{otherwise},
\end{cases}
\end{equation}
the function $\Phi$ is used in \cite{Maas11} to construct a
distance on the set of probability
densities on $U$ with respect to $\pi$.
\end{example}

\begin{example}
\label{ex:fi}
One of the oldest instances involving standard perspective
functions is the Fisher information of a differentiable probability
density $\mathsf{y}\colon\RR^N\to\RPP$ \cite{Fish25}, that is,
\begin{equation}
\label{e:fisher}
\Psi(\mathsf{y})=\int_{\RR^N}\frac{\|\nabla
\mathsf{y}(\omega)\|_2^2}{\mathsf{y}(\omega)}d\omega,
\end{equation}
where $\|\cdot\|_2$ is the standard Euclidean norm on $\RR^N$.
Going back to Definition~\ref{d:persp}, given a nonempty open
set $\Omega\subset\RR^N$, \eqref{e:fisher} can be formalized as an
instance of the function 
\begin{equation}
\label{e:y7}
\Psi\colon W^{1,r}(\Omega)\to\RX\colon
\mathsf{y}\mapsto\displaystyle{\int_\Omega}
\big(\varphi\persp s\big)
\big(\nabla \mathsf{y}(\omega),\mathsf{y}(\omega)\big)d\omega,
\end{equation}
where $r\in\left[1,\pinf\right[$, $\XX=\RR^N$, $\YY=\RR$, 
$\varphi=\|\cdot\|^2_{2}$,
and $s\colon y\mapsto y$. More generally,
assume that $\Gamma_0(\XX)\ni\varphi\geq 0$ and that
$\Gamma_0(\YY)\ni -s\leq 0$ satisfies $s^{-1}(\RPP)\neq\emp$. 
Then $\cam\pbas{(-s)}\neq\emp$ and 
Theorem~\ref{t:55}\ref{t:55i-}
asserts that $\varphi\persp s\in\Gamma_0(\XX\oplus\YY)$. In turn,
the linearity and the continuity of $\mathsf{y} \mapsto(\nabla
\mathsf{y},\mathsf{y})$ imply
that $\Psi\in\Gamma_0(W^{1,r}(\Omega))$.
For instance, let $\|\cdot\|$ be a norm on $\RR^N$, let 
$p\in\left]1,\pinf\right[$, take 
$\gamma\in\left]1/p,1\right]$, set 
$q=(\gamma p-1)/(p-1)\in\left]0,1\right]$, and define 
\begin{equation}
\label{e:e_q}
\varphi=\|\cdot\|^p\quad\text{and}\quad
s\colon\YY\to\left[\minf,\pinf\right[ \colon y\mapsto\begin{cases}
y^q,&\text{if}\;\: y\geq 0; \\
\minf,&\text{if}\;\: y<0.
\end{cases}
\end{equation}
To make \eqref{e:y7} explicit in this scenario, let us introduce
\begin{equation}
\ln_{\gamma}\colon\RR\to\left[\minf,\pinf\right[ \colon {y}
\mapsto 
\begin{cases}
\dfrac{y^{1-\gamma}-1}{1-\gamma},&\text{if}\:\:\gamma\neq
1\:\:\text{and}\:\:y\in\RPP;\\
\ln y,&\text{if}\:\:\gamma=1\:\:\text{and}\:\:y\in\RPP;\\
\minf,&\text{if}\:\: y\in\RM
\end{cases}
\end{equation} 
and note that 
$(\forall y\in\RPP)$ $(\ln_{\gamma})'(y)=1/y^{\gamma}$.
Let $\mathsf{y}\in W^{1,r}(\Omega)$, set
$\Omega_0=\menge{\omega\in\Omega}{\mathsf{y}(\omega)=0}$, and
set
$\Omega_+=\menge{\omega\in\Omega}{\mathsf{y}(\omega)>0}$. 
Then, by Corollary~\ref{c:305}\ref{c:305iii} and
\cite[Proposition~5.8.2]{Atto06}, 
if $\mathsf{y}\geq 0$ a.e.,
\begin{align}
\label{e:y8}
\displaystyle{\int_\Omega}
\big(\varphi\persp s\big)
\big(\nabla \mathsf{y}(\omega),\mathsf{y}(\omega)\big)d\omega
&=
\displaystyle{\int_{\Omega_0}}
(\rec\varphi)(\nabla \mathsf{y}(\omega))d\omega
+
\displaystyle{\int_{\Omega_+}}
s(\mathsf{y}(\omega))
\varphi\Bigg(
\dfrac{\nabla \mathsf{y}(\omega)}{s(\mathsf{y}(\omega))}
\Bigg)d\omega
\nonumber\\
&=
\displaystyle{\int_{\Omega_0}}
\iota_{\{0\}}(\nabla \mathsf{y}(\omega))d\omega
+\displaystyle{\int_{\Omega_+}}
\mathsf{y}(\omega)^{q}
\bigg\|\dfrac{\nabla\mathsf{y}(\omega)}{\mathsf{y}(\omega)^q}
\bigg\|^{p}d\omega
\nonumber\\
&=
\displaystyle{\int_{\Omega_+}}
\mathsf{y}(\omega)
\Bigg\|\dfrac{\nabla\mathsf{y}(\omega)}
{\mathsf{y}(\omega)^\gamma}\Bigg\|^{p}d\omega
\nonumber\\
&=
\displaystyle{\int_{\Omega_+}}
\mathsf{y}(\omega)\|\nabla\ln_{\gamma}\mathsf{y}(\omega)\|^{p}
d\omega.
\end{align}
Altogether, it follows from Corollary~\ref{c:305}\ref{c:305iii}
that 
\begin{equation}
\label{e:fishec}
\displaystyle{\int_\Omega}
\big(\varphi\persp s\big)
\big(\nabla\mathsf{y}(\omega),\mathsf{y}(\omega)\big)d\omega=
\begin{cases}
\displaystyle{\int_{\Omega_+}}
\mathsf{y}(\omega)\|\nabla\ln_{\gamma}\mathsf{y}(\omega)\|^{p}
d\omega,&\text{if}\;\;
\mathsf{y}\geq 0\;\;\text{a.e.};\\
\pinf,&\text{otherwise.}
\end{cases}
\end{equation}
This type of integral shows up in information theory and in 
thermostatistics \cite{Berc13,Lutw05}. In view of
Corollary~\ref{c:305}\ref{c:305iii}, our construction
\eqref{e:fishec} is guaranteed to be in
$\Gamma_0(W^{1,r}(\Omega))$, which opens a path to solve
variational problems such as those in \cite{Berc13} rigorously.
In the case when $\|\cdot\|=\|\cdot\|_2$, $p=2$, and $\gamma=q=1$,
this recovers a result of \cite{Svva18} on the Fisher information
\eqref{e:fisher}.
\end{example}

\section{Concluding remarks} 
\label{sec:7} 
We have proposed several
contributions to the theory of perspective functions with nonlinear
scaling. First, we introduce the notion of a preperspective
function and define the perspective as its largest lower
semicontinuous minorant. This construction captures the standard
case of linear scaling and guarantees properness, lower
semicontinuity, and convexity regardless of the sign of the
conjugate of the base function and of the nature of the scaling
function. Our construction necessitate the introduction of new
envelopes, called the $\blacktriangledown$ and $\blacktriangleup$
envelopes, which we have thoroughly investigated. We then compute
the Legendre conjugate of the proposed nonlinear scaled
perspectives. These conjugation formulas are central in duality
methods but they also proved to be essential to the computation of
proximity operators of perspective functions in the follow-up paper
\cite{Prox23}. Our next contribution is to provide explicit
formulas for the computation of perspective functions in a broad
range of scenarios. Finally, these notions are illustrated by
examples as well as through applications touching on areas such as
mean-field games, optimal transportation, and information theory.

\end{document}